\newtheorem{thm}{Theorem}[section]
\newtheorem*{thm*}{Theorem}
\newtheorem{prop}[thm]{Proposition}
\newtheorem{lem}[thm]{Lemma}
\newtheorem{cor}[thm]{Corollary}
\theoremstyle{definition}
\newtheorem{definition}[thm]{Definition}
\newtheorem{example}[thm]{Example}
\newtheorem*{ex*}{Example}
\theoremstyle{remark}
\newtheorem{remark}[thm]{Remark}
\newtheorem*{remark*}{Remark}
\DeclareMathOperator\vol{vol}
\DeclareMathOperator\GL{GL}
\DeclareMathOperator\U{U}
\DeclareMathOperator\SU{SU}
\DeclareMathOperator\Hess{Hess}
\DeclareMathOperator\Vol{Vol}
\def\d{{\rm d}}
\def\i{\mathrm{i}}
\def\j{\mathrm{j}}
\def\w{\wedge}
\def\ph{\varphi}
\DeclareMathOperator\tr{tr}
\def\C{\mathbb{C}}
\def\R{\mathbb{R}}
\def\Z{\mathbb{Z}}
\def\Ker{\mathrm{ker}}
\def\Im{\mathrm{im}}
\DeclareMathOperator\W{\wedge}
\DeclareMathOperator\ep{\varepsilon}
\begin{document}

\title{Pluripotential theory and special holonomy, I:\\ Hodge decompositions and $\partial\bar\partial$ lemmata}

\author{Tommaso Pacini and Alberto Raffero}
\affil{Department of Mathematics, University of Torino \\ via Carlo Alberto 10, 10123 Torino, Italy \\ tommaso.pacini@unito.it, alberto.raffero@unito.it}
\date{}
\maketitle

\begin{abstract}
Let $M$ be a compact torsion-free $G_2$ 7-manifold or Calabi-Yau 6-manifold. We prove Hodge decomposition theorems for the $dd^\phi$ operators, 
introduced by Harvey and Lawson \cite{HL:intropotential}, which generalize the $i\partial\bar\partial$ operator used in classical pluripotential theory. 
We then obtain analogues of the $\partial\bar\partial$ lemma in this context. 
We formalize this by defining cohomology spaces analogous to Bott-Chern cohomology and we relate them to harmonic forms on $M$. 

In the $G_2$ case we provide a geometric interpretation of the corresponding cohomology classes in terms of coassociative submanifolds and gerbes: 
this is analogous to the classical interpretation of Bott-Chern cohomology classes in terms of divisors and holomorphic line bundles.
\end{abstract}

\section{Introduction}\label{s:Intro}

The main goal of this paper is to initiate the study of Hodge theory for certain natural geometric operators which appear in the context of calibrated geometry \cite{HL:intropotential}. More specifically, we are interested in manifolds with special holonomy. 

Although Harvey and Lawson manage to define and study these operators within a fairly unified framework, our results require an in-depth analysis of their interactions 
with the representation theory underlying each specific situation. We thus focus on two concrete classes of compact manifolds with special holonomy: torsion-free $G_2$ 7-manifolds and Calabi-Yau 6-manifolds. 
Our main results are theorems \ref{thm:G2Hodge}, \ref{thm:SU3Hodge} 
and corollaries \ref{cor:G2globallemma}, \ref{cor:G2coh}, \ref{cor:SU3globallemma}, \ref{cor:SU3coh}. Comparison with the situation for K\"ahler manifolds, adopting the viewpoint presented in Theorem \ref{ThmSplittingKahler}and Corollary \ref{cor:ddomega}, will make it clear that the same techniques may be used to study many other situations. 

These results and techniques are of an algebraic and functional analytic nature. Section \ref{s:gerbes}, in particular Corollary \ref{cor:G2BottChern}, complements this by providing a geometric perspective, thus restoring the appropriate balance between the analytic and geometric aspects of this work.

We can explain our motivation, goals and results as follows.

\paragraph{Hodge decompositions.}
Let $M$ be a smooth manifold. Consider a linear differential operator $P:\Lambda^0(E)\rightarrow\Lambda^0(F)$ of order $m$, with smooth coefficients, 
between smooth sections of vector bundles $E,F$ on $M$. We shall use the term \textit{Hodge decomposition} to refer to a splitting of $\Lambda^0(F)$ of the following type:
\[
\Lambda^0(F)=\Im(P)\oplus\ker(Q),
\]
where $Q$ is an auxiliary operator. Typical, desirable, properties are:
\begin{enumerate}[(i)]
\item the decomposition should be orthogonal with respect to some $L^2$-type metric on $\Lambda^0(F)$;
\item $\ker(Q)$ should be finite-dimensional.
\end{enumerate}

It is sometimes possible to determine a priori a subspace $V\leq\Lambda^0(F)$ such that $\Im(P)\leq V$. In this case we will look for a splitting of $V$. If this subspace is defined via algebraic conditions, it will usually correspond to a subbundle $F'\leq F$: this case is similar to the one above. If instead it is defined by differential conditions, it will be convenient to incorporate them into $Q$. 
Appendix \ref{Sect:AnalyticTools} contains a detailed description of the analytic techniques used throughout the paper. Remarks \ref{rem:alternativeKahler}, \ref{rem:alternativeG2} discuss a possible alternative technique, based on the notion of elliptic complexes \cite{AtBo}.

\medskip 

\noindent\textit{Example. }
Assume $(M,g)$ is a compact oriented Riemannian manifold. Riemannian Hodge theory then studies the following cases: 
\begin{itemize}
\item $P$ is the standard Hodge Laplacian on differential forms
$$\Delta\coloneqq dd^*+d^*d:\Lambda^k(M)\rightarrow\Lambda^k(M).$$
Here, the theory provides a decomposition $\Lambda^k(M)=\Im(\Delta)\oplus\ker(\Delta)$.
\item $P$ is the standard exterior derivative on differential forms
$$d:\Lambda^{k-1}(M)\rightarrow\Ker(d)\leq\Lambda^k(M).$$
Here, the theory provides a decomposition $\ker(d)=\Im(d)\oplus\ker(\Delta)$.
\end{itemize}
In the first case, the decomposition serves to characterize $\Im(\Delta)$ as the orthogonal complement of the space of harmonic forms.

The second case introduces several interesting twists. First, $\Im(d)$ is contained in a subspace defined by a differential condition. 
Second, it relates two different operators $d$, $\Delta$. 
Third, the matter of characterizing $\Im(d)$ can be reformulated as follows: when is a given $k$-form $\alpha$ exact?
An obvious necessary condition is $d\alpha=0$. This is a differential condition, easy to check. 
A complete answer to the question, however, requires integration. Equivalently, it requires solving the global PDE $d\beta=\alpha$: 
this is usually not easy. The decomposition related to $P \coloneqq d$ shows however that the two conditions almost coincide: 
the space of exact forms coincides with the space of closed forms up to finite dimensions. 

This state of affairs is formalized in terms of the de Rham cohomology spaces $H^k(M;\R)\coloneqq \ker(d)/\Im(d)$. 
Hodge theory shows that (i) these spaces are finite-dimensional, (ii) they can be represented via harmonic forms. 
The fact that de Rham cohomology is isomorphic to singular cohomology then provides a deep link between Analysis and Topology.

\paragraph{Geometrically defined operators.}
Let $(M,J)$ be a complex manifold. The group $\GL(n,\C)$ acts on the bundle of complex $r$-forms on $M$ and determines its splitting into irreducible components $\Lambda^{p,q}(M)$. In turn, this defines a decomposition of the exterior derivative $d$ into operators $\partial$, $\bar\partial$. Given a compatible metric, 
Hermitian Hodge theory provides Hodge decompositions for these operators and the corresponding Laplacian operators. 
These results are formalized in terms of the Dolbeault cohomology spaces $H^{p,q}(M)\coloneqq \ker(\bar\partial)/\Im(\bar\partial)$.

Other groups lead to other geometric structures and other decompositions of $d$.
Such structures appear, most prominently, in the context of special holonomy. 
The most classical example is the group $\U(n)$, leading to the notion of K\"ahler manifolds. 
Here, the splitting of forms and the decomposition of $d$ are studied via the Lefschetz operator which determines yet another splitting, leading to the well-known K\"ahler Hodge theory.

As mentioned, we shall focus on two other cases: 
(i) The group $G_2$, leading to the notion of torsion-free $G_2$ manifolds. 
Here, the decomposition of $d$ was worked out by Bryant and Harvey \cite{Bryant}.  
(ii) The group $\SU(3)$, leading to the notion of Calabi-Yau 6-manifolds. In Section \ref{Sec:CY6} we work out the analogous details in this case. 

Understanding these decompositions is a vital step towards our main goal, which is to study certain other geometrically defined operators, discussed below. In particular, the $\SU(3)$-induced decomposition of $d$ is fundamental for our Theorem \ref{thm:SU3Hodge}, but is also of independent interest.

\paragraph{Classical pluripotential theory.}The composition of $\partial,\bar\partial$ defines the operator $\partial\bar\partial$. This operator (more precisely: the corresponding real operator $i\partial\bar\partial$) provides the foundation for pluripotential theory. 

Let us focus on two fundamental results concerning this operator, cumulatively known as the $\partial\bar\partial$ lemmata. 
One is local, and applies to any complex manifold. The other is global, and applies only to certain subclasses of complex manifolds: 
most prominently, compact K\"ahler manifolds. These results express deep relationships between the operators $d, \partial, \bar\partial$ 
and $\partial\bar\partial$. We shall review this theory in Section \ref{s:dedebar}.

Once again, the situation is formalized in terms of a cohomology: the Bott-Chern cohomology, defined as $H^{p,q}_{BC}(M)\coloneqq(\ker(\partial)\cap\ker(\bar\partial))/\Im(\partial\bar\partial)$. The global $\partial\bar\partial$ lemma provides an isomorphism $H^{p,q}_{BC}(M)\simeq H^{p,q}(M)$.

These notions are of fundamental importance in geometry because $H^{1,1}(M)$ and  $H^{1,1}_{BC}(M)$ are the natural locus for many fundamental 
objects in K\"ahler geometry, such as the K\"ahler cone and the first Chern class for vector bundles. $H^{1,1}_{BC}(M)$ however concerns functions. 
The isomorphism $H^{1,1}(M)\simeq H^{1,1}_{BC}(M)$ thus opens a door between complex geometry and function theory. 
This has had a profound impact on modern K\"ahler geometry, for example in the search for special metrics, or in the theory of the K\"ahler-Ricci flow. 
In particular, it shows how to reduce certain systems of equations to equations for a single potential function. 

It is an interesting fact that these cohomologies and isomorphisms, even though they encode so much K\"ahler information, 
rely only on the complex structure.

The most efficient, thus most common, way of proving the global $\partial\bar\partial$ lemma is as a by-product of K\"ahler Hodge theory. This also serves to indicate interesting relationships between these operators and the Hodge Laplacian. 
For the purposes of this paper we will instead find it useful to emphasize a different proof, 
via an ad hoc Hodge decomposition for the operator $P\coloneqq 2i\partial\bar\partial$.
This is Theorem \ref{ThmSplittingKahler} and Corollary \ref{cor:ddomega}.

\paragraph{Cohomology and harmonic forms.}
In all the above situations, the Hodge Laplacian is known to preserve the splitting of forms induced by the group action. 
In turn, this induces a splitting of harmonic forms. Sometimes these spaces have a cohomological interpretation: 
\begin{itemize}
\item In the Riemannian context, the space of harmonic forms is isomorphic to de Rham cohomology.
\item In the K\"ahler context, the space of harmonic forms of type $(p,q)$ defined by the $\GL(n,\C)$-action is isomorphic to Dolbeault cohomology 
(alternatively, to Bott-Chern cohomology).
\item In the context of compact oriented 4-manifolds, one can build a cohomology space isomorphic to the space of harmonic self-dual forms.
\end{itemize}
On the other hand, to our knowledge there is no known cohomology corresponding to the spaces of harmonic forms induced by the 
$\U(n)$, $G_2$ or $\SU(3)$ action. 
Corollaries \ref{cor:G2coh}, \ref{cor:SU3coh} partially fill this gap: 
they show that some of these spaces are distinguished by the fact that they admit a cohomological interpretation.

\paragraph{{Pluripotential theory and calibrated geometry.}}In \cite{HL:calibratedgeometry} it is shown that K\"ahler geometry is a special case of a more general theory, known as \textit{calibrated geometry}. Here, one considers a Riemannian manifold $(M,g)$ endowed with a calibration $\phi$, namely a closed $k$-form $\phi$ 
such that $\phi|_\pi \leq \mathrm{vol}_\pi$, for all oriented tangent $k$-planes $\pi$ on $M.$  
In particular, Harvey and Lawson showed that calibrated geometry is the natural framework within which to study special features of manifolds with special holonomy.
 
Harvey and Lawson's later papers, in the early 2000's, showed the existence of a geometrically defined $dd^\phi$ (or $\phi$-Hessian) operator on any calibrated manifold, and developed a corresponding pluripotential theory \cite{HL:intropotential}. 
In the case of K\"ahler manifolds, $dd^\phi=2i\partial\bar\partial$ so their theory extends the classical, complex, pluripotential theory mentioned above. The most basic take-away message from their papers is that all such theories are, from the analytic viewpoint, as rich as the classical theory. 

\paragraph{Summary of results.}
The above discussion lays out the rich, complex, {tapes}{try} of Geometry and Analysis which motivates and leads to the results in this paper. 
We can now summarize our results as follows.

Theorem \ref{ThmSplittingKahler} and Corollary \ref{cor:ddomega} provide a reformulation of the classical $\partial\bar\partial$ lemma, ideally suited to emphasize the uniform role of Hodge decompositions across the spectrum of manifolds with special holonomy.

Our main theorems \ref{thm:G2Hodge},\ref{thm:SU3Hodge} concern the operator $P\coloneqq dd^\phi$. These theorems are exactly analogous to Theorem \ref{ThmSplittingKahler}, after replacing the $\GL(n,\C)$ decomposition of $d$ with the analogous $G_2$, $\SU(3)$ decompositions mentioned above. In both cases, a key step consists in comparing the $dd^\phi$ operator with the corresponding decomposition of $d$, in order to detect the correct subspace of $\Lambda^0(F)$ within which to set up the Hodge decomposition.

As a by-product of our Hodge decomposition theorems, we obtain natural analogues of the $\partial\bar\partial$ lemma for manifolds with special holonomy. These results appear as corollaries \ref{cor:G2globallemma}, \ref{cor:SU3globallemma}. More precisely: we provide analogues of the global $\partial\bar\partial$ lemma, as stated in Corollary \ref{cor:ddomega}. In the $G_2$ context we also provide a counter-example to the local lemma.

We formalize these $\partial\bar\partial$ lemma analogues by defining corresponding cohomology spaces, analogous to Bott-Chern cohomology. We shall refer to these new cohomology spaces as the \textit{$dd^\phi$-cohomology}. Corollaries \ref{cor:G2coh}, \ref{cor:SU3coh} show that they are isomorphic to certain spaces of harmonic forms defined by the group action. 

Recall that Bott-Chern cohomology was originally introduced \cite{BottChern} in order to formalize the geometry of Chern connections on a holomorphic line bundle. In Section \ref{s:gerbes} we provide an analogous interpretation, in the $G_2$ context, of our $dd^\phi$-cohomology. This relies on the language of gerbes, thus reinforcing their potential role in the context of calibrated geometry \cite{HitchinGerbes}, \cite{GoncaloGerbes}. The main result is Corollary \ref{cor:G2BottChern}, which should be compared to Corollary \ref{cor:BottChern}.

\paragraph{Conclusions.}

Our results reinforce the analogies between the classical complex pluripotential theory and the extended theory defined by Harvey and Lawson. 
Although still firmly analytic, they shift the attention, however, towards those aspects which are most relevant to differential geometry, laying the foundations for two conceptually new, intriguing, questions:

\noindent 
(i) To what extent do our $dd^\phi$-cohomology spaces and the corresponding spaces of harmonic forms define a natural locus within which to develop $G_2$ and $\SU(3)$ geometry, beyond our current Section \ref{s:gerbes}? This is work in progress.

\noindent
(ii) How to define higher-degree analogues of the Harvey-Lawson operator? This, and the corresponding decompositions, are also work in progress, potentially related to homotopy theory and issues of formality in the sense of \cite{DGMS}.

As mentioned above, in the K\"ahler case the relevant cohomology spaces rely only on the complex structure, rather than also on the K\"ahler form. The fact that these structures play separate roles is a distinguishing feature of K\"ahler geometry, which does not hold for other geometries with special holonomy. In our case, for example, the $dd^\phi$-cohomology uses the full $G_2$ or $\SU(3)$ structure. It remains to be seen what consequences this might have on any geometric developments.

It is also worth noticing that, in the $G_2$ case, our characterization of $\Im(dd^\phi)$ is equivalent to a characterization of the space of symmetric 
2-tensors of Hessian type. This is a result of independent interest. 
The corresponding problem in the general context of Riemannian geometry is, to our knowledge, still open.

Other recent work which uses cohomology to encode certain aspects of $G_2$ geometry includes \cite{CKT,KLS}. Their motivation and cohomology spaces are largely unrelated to ours, but perhaps related to our question (ii), above.

\bigskip

\noindent\textit{Acknowledgements: }
It is clear that this work rests upon the foundational papers of Harvey and Lawson. 
We are happy to thank Sandro Coriasco and J\"org Seiler for interesting conversations and Jason Lotay and Riccardo Piovani for useful comments. 

This work was supported by the project PRIN 2022  
``Real and Complex Manifolds: Geometry and Holomorphic Dynamics'' and by GNSAGA of INdAM.  
 
A few historical remarks may also be in order. Let $M$ be a compact K\"ahler manifold. The modern, general, statement of the global $\partial\bar\partial$ lemma appears in \cite{DGMS}, but a version of this result specific to $(1,1)$ forms already appears in \cite{Calabi}, equation (7). The fact that the Laplacian preserves types and the relationship between the $\U(n)$- and Lefschetz splittings of forms appear in \cite{Chern}.
 
\section{The $\partial\bar\partial$ lemmata and  Bott-Chern cohomology}\label{s:dedebar}

The goal of this section is to review two classical results in complex geometry known as the $\partial\bar\partial$ lemmata. 
One is local, and holds on any complex manifold. The other is global, and requires extra assumptions. 
The most typical setting is that of compact K\"ahler manifolds. In this case the lemma can be viewed as a corollary of K\"ahler Hodge theory.

\subsection{The decomposition of $d$.}

Let $(M,J)$ be a $n$-dimensional complex manifold. Recall that the spaces of complex-valued forms decompose into $\GL(n,\C)$-irreducible 
subspaces as follows:
\[
\Lambda^r(M,\C)=\bigoplus_{p+q=r}\Lambda^{p,q}(M).
\]
If we restrict the exterior differential $d$ to any $\Lambda^{p,q}(M)$, then project its image, we obtain the operators
\[
\partial:\Lambda^{p,q}(M)\rightarrow\Lambda^{p+1,q}(M), \ \ \bar\partial:\Lambda^{p,q}(M)\rightarrow\Lambda^{p,q+1}(M),
\]
so that $d=\partial + \bar\partial$.
The condition $d^2=0$ implies the conditions
\[
\partial^2=0,\quad \bar\partial^2=0, \quad \partial\bar\partial=-\bar\partial\partial.
\]
Complex conjugation sends forms of type $(p,q)$ to forms of type $(q,p)$. On the other hand, real forms are conjugation-invariant. 
It follows that they must be of type $(p,p)$. We are particularly interested in the case $p=1$. In this case, the operator 
\[
i\partial\bar\partial:\Lambda^0(M)\rightarrow\Lambda_\R^{1,1}(M)
\]
sends real-valued functions to real 2-forms of type $(1,1)$.

Now assume $M$ is K\"ahler. Let $\omega$ denote the K\"ahler form. Then there exists a refined decomposition of complex-valued forms into 
$\U(n)$-irreducible subspaces. It can be studied in terms of the Lefschetz operator
\[
L_\omega:\Lambda^r(M)\rightarrow\Lambda^{r+2}(M), \quad \alpha\mapsto\omega\wedge\alpha.
\]
For example, this leads to the orthogonal decomposition 
\[
\Lambda_\R^{1,1}(M)=\Lambda^0(M)\cdot\omega\oplus\Lambda^{1,1}_0(M),
\] 
which we will use below.

The Lefschetz operator is a real algebraic map, which modifies the degree of a differential form. 
Its interactions with the differential operators $\partial,\bar\partial$ are codified by the so-called K\"ahler identities, 
which lie at the heart of K\"ahler Hodge theory.

\begin{remark}
We will encounter analogous decompositions, algebraic operators and identities below, in the context of other geometries.
\end{remark}

\subsection{Cohomology.} 

On any complex manifold, the operators $\partial$ and $\bar\partial$ allow one to define several differential complexes, thus cohomology groups. 
We shall be particularly interested in the following:
\begin{enumerate}[$\bullet$]
\item The complex 
\[
\dots\rightarrow\Lambda^{p,q-1}(M) \xrightarrow[]{\bar\partial}  
\Lambda^{p,q}(M) \xrightarrow[]{\bar\partial}
\Lambda^{p,q+1}(M)\rightarrow\dots
\]
defines the Dolbeault cohomology groups $H^{p,q}(M)\coloneqq \ker(\bar\partial)/\Im(\bar\partial)$.
\item The sequence
\[
\Lambda^{p-1,q-1}(M)\xrightarrow[]{\partial\bar\partial}
\Lambda^{p,q}(M)\xrightarrow[]{(\partial,\bar\partial)}
\Lambda^{p+1,q}(M)\oplus\Lambda^{p,q+1}(M)
\]
defines the Bott-Chern cohomology spaces 
\[
H^{p,q}_{BC}(M)\coloneqq \left(\ker(\partial)\cap\ker(\bar\partial)\right)/\Im(\partial\bar\partial). 
\]
More simply: $H^{p,q}_{BC}(M)=\ker(d)/\Im(\partial\bar\partial)$.
\end{enumerate}
For a general complex manifold there exist only weak relationships between these cohomology groups and de Rham cohomology (see, for instance, \cite{AngellaTomassini, DGMS, Frolicher}).

\smallskip

Now assume $M$ is compact and K\"ahler. Then:
\begin{enumerate}[$\bullet$] 
\item The K\"ahler identities show that the Hodge Laplacian operator $\Delta \coloneqq dd^*+d^*d$ preserves types, \textit{i.e.}, it restricts to an operator $\Delta:\Lambda^{p,q}(M)\rightarrow\Lambda^{p,q}(M)$. 
This leads to the following decomposition of the spaces of complex harmonic forms
\[
\mathcal{H}^r=\Ker(\Delta|_{\Lambda^r})=\bigoplus_{p+q=r}\Ker(\Delta|_{\Lambda^{p,q}})=\bigoplus_{p+q=r}\mathcal{H}^{p,q}. 
\]
The Hodge decomposition theorems 
$$\Lambda^r(M)=\Im(\Delta)\oplus\Ker(\Delta), \quad \Lambda^{p,q}(M)=\Im(\Delta)\oplus\Ker(\Delta)$$
then show that $H^{p,q}(M)\simeq\mathcal{H}^{p,q}$ and that $H^r(M)\simeq\bigoplus_{p+q=r} H^{p,q}(M)$.
\item Bott-Chern cohomology is isomorphic to Dolbeault cohomology: 
$$H^{p,q}_{BC}(M)\simeq H^{p,q}(M).$$ 
This fact is, in some sense, the guiding light for this paper. It is a consequence of the global $\partial\bar\partial$ lemma. 
We shall investigate this in detail in the following sections.
\end{enumerate}

\begin{remark}
In particular, the above discussion shows the existence of an injection $H^{p,q}(M)\rightarrow H^r(M;\C)$, but its definition is rather involuted: 
it is obtained via a preliminary identification with $\mathcal{H}^{p,q}$. 
If however $p=q$ and we focus on real forms, then $\bar\partial\alpha=0$ implies $d\alpha=0$, 
so the map $[\alpha]\in H^{p,p}(M;\R)\mapsto [\alpha]\in H^r(M)$ is well-defined. 
It will follow from the global $\partial\bar\partial$ lemma, discussed below, that it is an injection.
\end{remark}

\begin{remark}
Notice, as also mentioned in the Introduction, that the K\"ahler condition is used to prove results concerning complex cohomologies. 
The $\U(n)$-decomposition of the spaces $\Lambda^r(M)$ leads to a further decomposition of the spaces of harmonic forms, 
such as (in the real case) $\mathcal{H}^{1,1}_{\R}=\R\cdot\omega\oplus\mathcal{H}^{1,1}_0$, 
but in this case there is no cohomological interpretation of these spaces.
\end{remark}

\subsection{The local $\partial\bar\partial$ lemma.}

Recall the standard Poincar\'e lemma: on any smooth manifold, if $\alpha\in\Lambda^k(M)$ is $d$-closed then it is locally $d$-exact. 
The proof uses simple differential calculus.

Also recall the $\bar\partial$-Poincar\'e (or Dolbeault-Grothendieck) lemma: on any complex manifold, if $\alpha\in\Lambda^{p,q}(M)$ 
is $\bar\partial$-closed, then it is locally $\bar\partial$-exact. 
The proof is based upon integral representation formulae with singular holomorphic kernels, thus upon holomorphic function theory.
By conjugation, an analogous lemma holds for the $\partial$ operator.

These results have the following corollary.

\begin{prop}[Local $\partial\bar\partial$ lemma]
Let $(M,J)$ be a complex manifold. Fix $\alpha\in\Lambda^{p,q}(M)$. If $\alpha$ is $d$-closed then it is locally $\partial\bar\partial$-exact.
\end{prop}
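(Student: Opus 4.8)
The statement is local, so I would fix a point of $M$ and work on a small coordinate polydisk $U$ around it, on which the ordinary Poincar\'e lemma, the Dolbeault--Grothendieck ($\bar\partial$-Poincar\'e) lemma and its conjugate $\partial$-Poincar\'e lemma all apply: every $\bar\partial$-closed form of positive $\bar\partial$-degree is $\bar\partial$-exact on $U$, every $\partial$-closed form of positive $\partial$-degree is $\partial$-exact on $U$, and, as a consequence, every $d$-closed holomorphic $k$-form with $k\ge 1$ has a holomorphic primitive on $U$. I may assume $p,q\ge 1$ (for $p=0$ or $q=0$ the conclusion must be read in the only sensible way, since no nonzero $(p,q)$-form of that type is $\partial\bar\partial$-exact). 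The plan is to run the classical ``zig-zag'' argument in the double complex $\big(\Lambda^{\bullet,\bullet}(U),\,\partial,\,\bar\partial\big)$: first climb a staircase of local primitives, then turn around at the top and climb back down.

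\emph{Ascent.} Since $\bar\partial\alpha=0$ and $q\ge 1$, the $\bar\partial$-Poincar\'e lemma gives $\psi_0\in\Lambda^{p,q-1}(U)$ with $\alpha=\bar\partial\psi_0$. From $\partial\alpha=0$ and $\partial\bar\partial=-\bar\partial\partial$ we get $\bar\partial(\partial\psi_0)=0$, so $\partial\psi_0\in\Lambda^{p+1,q-1}(U)$ is $\bar\partial$-closed; if $q-1\ge 1$ the $\bar\partial$-Poincar\'e lemma yields $\psi_1\in\Lambda^{p+1,q-2}(U)$ with $\partial\psi_0=\bar\partial\psi_1$, and then $\partial\psi_1$ is again $\bar\partial$-closed, so the construction iterates. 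One obtains $\psi_0,\dots,\psi_{q-1}$ with $\psi_j\in\Lambda^{p+j,\,q-1-j}(U)$, $\alpha=\bar\partial\psi_0$, $\partial\psi_j=\bar\partial\psi_{j+1}$ for $j\le q-2$, and finally $\omega:=\partial\psi_{q-1}\in\Lambda^{p+q,0}(U)$ a $\bar\partial$-closed, hence holomorphic, $(p+q)$-form.

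\emph{Turn-around and descent.} As $\omega$ is a $d$-closed holomorphic form of degree $p+q\ge 1$, it has a holomorphic primitive $\omega=\partial\gamma_{q-1}$, so $\bar\partial\gamma_{q-1}=0$. Then $\psi_{q-1}-\gamma_{q-1}$ is $\partial$-closed of $\partial$-degree $p+q-1\ge 1$, hence $\partial$-exact by the $\partial$-Poincar\'e lemma; substituting this back into $\partial\psi_{q-2}=\bar\partial\psi_{q-1}$ and using $\bar\partial\gamma_{q-1}=0$, one finds that after correcting $\psi_{q-2}$ by a suitable $\bar\partial$-exact term it becomes $\partial$-closed of $\partial$-degree $p+q-2$, and the $\partial$-Poincar\'e lemma applies again. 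Repeating this down the staircase, at every stage the form to be ``undone'' is $\partial$-closed of $\partial$-degree at least $p\ge 1$, which is exactly what keeps the $\partial$-Poincar\'e lemma available throughout. Descending all the way, one reaches $\gamma_0\in\Lambda^{p-1,q-1}(U)$ and a $\bar\partial$-exact form $\bar\partial\tau$ with $\psi_0+\bar\partial\tau=\partial\gamma_0$. Applying $\bar\partial$ gives $\alpha=\bar\partial\psi_0=\bar\partial\partial\gamma_0=-\partial\bar\partial\gamma_0$, i.e.\ $\alpha=\partial\bar\partial(-\gamma_0)$ on $U$, which is the assertion.

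I expect the delicate point to be the turn-around: the $\partial$-Poincar\'e lemma alone will not suffice there, because it does not produce a \emph{holomorphic} primitive of $\omega$, and without $\bar\partial\gamma_{q-1}=0$ the successive corrections fail to telescope. This is precisely where one must invoke the holomorphic Poincar\'e lemma (equivalently: apply the ordinary Poincar\'e lemma to $\omega$ and then decompose the primitive by bidegree). Everything else is bookkeeping of bidegrees and signs. The symmetric version of the argument, built from the $\partial$-Poincar\'e lemma and ending in an antiholomorphic $(0,p+q)$-form, works just as well; one would use whichever of the two staircases is the shorter.
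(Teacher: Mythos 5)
Your argument is correct, and it proves more than the paper does: the paper only writes out the proof in the simplest setting $\alpha\in\Lambda^{1,1}(M)$, whereas you handle general $(p,q)$. The routes also differ where they overlap. The paper starts from the ordinary Poincar\'e lemma, writes $\alpha=d\beta$, decomposes $\beta=\beta^{1,0}+\beta^{0,1}$, and observes that the absence of $(2,0)$ and $(0,2)$ components in $\alpha$ forces $\partial\beta^{1,0}=0$ and $\bar\partial\beta^{0,1}=0$; a single application of each of the $\partial$- and $\bar\partial$-Poincar\'e lemmas then produces the potential. This is shorter and needs no turn-around, but it exploits a feature special to bidegree $(1,1)$ (the off-type components of $d\beta$ vanish outright rather than merely satisfying a coupled system); for general $(p,q)$ one is led back to a staircase in any case. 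Your zig-zag in the double complex $(\Lambda^{\bullet,\bullet},\partial,\bar\partial)$ is the standard general proof, and you correctly isolate the one genuinely delicate ingredient, namely the holomorphic Poincar\'e lemma at the turn-around --- without $\bar\partial\gamma_{q-1}=0$ the descent does not telescope, and the plain $\partial$-Poincar\'e lemma cannot supply a holomorphic primitive. My only caveat is your parenthetical ``apply the ordinary Poincar\'e lemma to $\omega$ and decompose the primitive by bidegree'': the top-bidegree component of a $d$-primitive of $\omega$ satisfies $\partial\beta^{k-1,0}=\omega$ but need not be $\bar\partial$-closed, so this does not by itself yield a holomorphic primitive; the holomorphic Poincar\'e lemma should be invoked as an independent (standard) result, e.g.\ via termwise integration of power series on the polydisk. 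With that understood, the proof is complete.
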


Let us review the proof in the simplest setting: $\alpha\in\Lambda^{1,1}(M)$, $d\alpha=0$.

The standard Poincar\'e lemma implies that, locally, $\alpha=d\beta$. 
Let us decompose $\beta$ into types, $\beta=\beta^{1,0}+\beta^{0,1}$, and rewrite the previous identity as
\[
\alpha=\partial\beta^{1,0}+\partial\beta^{0,1}+\bar\partial\beta^{1,0}+\bar\partial\beta^{0,1}.
\]
Since the LHS is of type $(1,1)$, we find that
\[
\partial\beta^{1,0}=0, \quad \bar\partial\beta^{0,1}=0.
\]
The $\partial$-Poincar\'e lemma implies that $\beta^{1,0}=\partial f_1$. Analogously, $\beta^{0,1}=\bar\partial f_2$. 
It follows that $\alpha=\partial\bar\partial(f_2-f_1)$.
 
\begin{remark}
If the closed form $\alpha\in\Lambda^{1,1}(M)$ is also real, namely $\alpha=\overline{\alpha}$, then one further obtains 
$\partial\bar\partial(\mathrm{Re}(f_2-f_1))=0$, whence
\[
\alpha = i\partial\bar\partial(\mathrm{Im}(f_2-f_1)). 
\]
\end{remark}

It will be important for us later on to notice a second link between the local $\partial\bar\partial$ lemma and holomorphic functions, as follows.

Two local potentials differ, on their common domain, by a pluriharmonic function:
\[
\alpha=\partial\bar\partial f_1=\partial\bar\partial f_2\quad\Rightarrow\quad\partial\bar\partial(f_1-f_2)=0.
\]
The validity of the local $\partial\bar\partial$ lemma thus requires the local existence of such functions. 
This indeed holds. For example, assume $f=re^{i\theta}$ is holomorphic and nowhere-vanishing (on some domain). 
Then $\log f=\log r+i\theta$ is holomorphic, so its real part $\log r=\log|f|$ is pluriharmonic.

\subsection{The global $\partial\bar\partial$ lemma.}

The local $\partial\bar\partial$ lemma holds in a very general setting: any complex manifold will do. However, its reach is very limited: as the name implies, it is a purely local result.

Certain categories of complex manifolds have an additional, surprisingly strong, property. 
The most relevant example is the class of compact, complex manifolds of K\"ahler type, \textit{i.e.}, which admit a K\"ahler metric. 
The result is as follows.

\begin{prop}[Global $\partial\bar\partial$ lemma]
Let $(M,J)$ be a compact complex manifold of K\"ahler type. Fix a $d$-closed $\alpha\in\Lambda^{p,q}(M)$. Then:
\begin{itemize}
\item The following conditions are equivalent: (i) $\alpha$ is globally $d$-exact, (ii) $\alpha$ is globally $\partial$-exact, (iii) $\alpha$ is globally $\bar\partial$-exact.
\item If any of the above conditions holds, then actually $\alpha$ is globally $\partial\bar\partial$-exact. 
 \end{itemize}
\end{prop}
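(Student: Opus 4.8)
The plan is to derive this from standard K\"ahler Hodge theory. Fix a K\"ahler metric on $M$, which exists by hypothesis. Two consequences of the K\"ahler identities do the work. First, the three Hodge Laplacians coincide, $\Delta_d = 2\Delta_\partial = 2\Delta_{\bar\partial}$; hence the notion of harmonic form, the harmonic projection $\mathcal{H}$ and the Green operator $G$ are common to $d$, $\partial$, $\bar\partial$, and $G$ commutes with each of $d,\partial,\bar\partial,d^*,\partial^*,\bar\partial^*$; moreover all these operators, and $G$, preserve the bidegree decomposition. Second, the same identities give the anticommutation relation $\bar\partial\partial^* = -\partial^*\bar\partial$ (and its conjugate). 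The easy half of the statement is then immediate: if $\alpha = \partial\bar\partial\gamma$, then $\alpha = \partial(\bar\partial\gamma)$ and $\alpha = -\bar\partial(\partial\gamma) = d(\bar\partial\gamma)$, so $\alpha$ is at once $\partial$-, $\bar\partial$- and $d$-exact.

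The core of the argument is the claim: \emph{if $\alpha\in\Lambda^{p,q}(M)$ is $\partial$-closed and $\bar\partial$-exact, then it is $\partial\bar\partial$-exact.} To prove it, write $\alpha = \bar\partial\beta$ with $\beta$ of pure type $(p,q-1)$ — the other bidegree components of any primitive have $\bar\partial$-image in other bidegrees, hence may be dropped — insert the $\partial$-Hodge decomposition $\beta = \mathcal{H}\beta + \partial\partial^* G\beta + \partial^*\partial G\beta$, and apply $\bar\partial$:
\[
\alpha \;=\; \bar\partial\beta \;=\; \bar\partial\mathcal{H}\beta \;+\; \bar\partial\partial\,\partial^* G\beta \;+\; \bar\partial\partial^*\,\partial G\beta.
\]
The first term vanishes because $\mathcal{H}\beta$ is harmonic, hence $\bar\partial$-closed. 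The second term is $-\partial\bar\partial(\partial^* G\beta)$, manifestly $\partial\bar\partial$-exact. For the third term, the anticommutation relation together with $\bar\partial\partial = -\partial\bar\partial$ turns it into $\partial^*\partial\,\bar\partial G\beta = \partial^*\partial\, G(\bar\partial\beta) = \partial^*\partial\, G\alpha = \partial^* G(\partial\alpha) = 0$, using that $G$ commutes with $\partial$ and that $\partial\alpha = 0$. Hence $\alpha = \partial\bar\partial(-\partial^* G\beta)$, with $-\partial^* G\beta$ of type $(p-1,q-1)$, as needed. Complex conjugation yields the mirror statement, with $\partial$ and $\bar\partial$ exchanged.

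It remains to organize the equivalences around the $d$-closed form $\alpha$, which is in particular both $\partial$-closed and $\bar\partial$-closed. If $\alpha = d\gamma$, then pairing with any $h\in\mathcal{H}^{p,q}$ and moving $d$ across shows $\alpha\perp\mathcal{H}^{p,q}$ (since $d^*h=0$), while $\bar\partial G\alpha = G\bar\partial\alpha = 0$ kills the $\bar\partial^*$-term in the $\bar\partial$-Hodge decomposition of $\alpha$; so $\alpha$ is $\bar\partial$-exact, hence $\partial\bar\partial$-exact by the claim. Combined with the easy half (which sends $\partial\bar\partial$-exact to $\partial$-, $\bar\partial$- and $d$-exact) and the mirror claim (which sends $\partial$-exact, resp. $\bar\partial$-exact, $d$-closed forms to $\partial\bar\partial$-exact ones), this closes the cycle and simultaneously proves the second bullet. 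The step I expect to require the most care is the vanishing of the cross term $\bar\partial\partial^*\partial G\beta$: it is not a formal cancellation, since it genuinely consumes both a K\"ahler identity and the hypothesis $\partial\alpha=0$, and one must keep track of bidegrees throughout so that the final potential lands in $\Lambda^{p-1,q-1}(M)$ — which is exactly what allows $\partial\bar\partial$ to return it to $\Lambda^{p,q}(M)$.
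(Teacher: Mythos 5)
Your proof is correct and follows exactly the route the paper itself indicates for this proposition: it does not write out a proof but appeals to K\"ahler Hodge theory, citing \cite[Cor.~3.2.10]{Huybrechts}, and your argument is precisely that standard one (coincidence of the Laplacians, common Green operator, the anticommutation $\bar\partial\partial^*=-\partial^*\bar\partial$, and the cancellation of the cross term) written out in full. The only caveat worth noting is that the paper deliberately develops a \emph{different}, self-contained proof later — an ad hoc Hodge decomposition for $dd^\omega=2i\partial\bar\partial$ on real $(1,1)$-forms (Theorem \ref{ThmSplittingKahler}, Corollary \ref{cor:ddomega}) — but that concerns the reformulated statement, not this proposition.
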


The proof starts with the choice of a K\"ahler structure. It then relies on K\"ahler Hodge theory, which in turn relies on the Lefschetz decomposition (see, e.g., \cite[Cor.~3.2.10]{Huybrechts}).

In this setting it is often useful to use an alternative expression for the $\partial\bar\partial$ operator. Specifically,
\[
2i\partial\bar\partial =   d\left(J^{-1}\right)^* d J^*,
\]
where we use the convention $J^*\alpha \coloneqq \alpha(J\cdot,\dots,J\cdot)$, for $\alpha\in\Lambda^k(M)$. 

The RHS emphasizes the possibility of moving away from the techniques of holomorphic function theory, which played an important role in the local lemma but is not relevant to the proof of the global lemma. It is this second formulation of the $\partial\bar\partial$ operator which, in the following sections, will become central to our extension of the $\partial\bar\partial$ lemma to other geometries. The point is that those other geometries do not offer an analogue of holomorphic function theory; in the $G_2$ case we will actually provide a counterexample to the local lemma. 

\section{The Harvey-Lawson $\phi$-Hessian operator}\label{s:Hessian}

The goal of this section is to review the definition \cite{HL:intropotential} of the main operators of interest to us.

Let us consider a Riemannian manifold $(M,g)$ with a calibration $\phi\in\Lambda^k(M)$: by definition, this means $d\phi=0$ and 
$\phi|_\pi \leq \mathrm{vol}_\pi$, for all oriented tangent $k$-planes $\pi$ on $M.$

\begin{definition}[\cite{HL:intropotential}]
Let $(M,g)$ be a Riemannian manifold and let $\phi\in\Lambda^k(M)$ be a calibration. The $d^\phi$-operator is defined as
\[
d^\phi f \coloneqq \nabla f \lrcorner \phi, 
\]
for all $f\in \Lambda^0(M)$. The $dd^\phi$-operator is defined as $dd^\phi f\coloneqq d(d^\phi(f))\in\Lambda^k(M)$.
\end{definition}

The $dd^\phi$-operator is the highest-order term of a geometrically defined, second-order, differential operator, defined by Harvey and Lawson as follows. 

Consider the pointwise action of $\mathrm{Aut}(TM)$ on $\Lambda^k(M)$, 
\[
F^*\alpha \coloneqq \alpha(F\cdot,\dots,F\cdot).
\]
Let us focus on the orbit (more precisely: the collection of pointwise orbits)  
of $\alpha=\phi$, so that the isotropy subgroup $\mathrm{Aut}_\phi\subseteq \mathrm{Aut}(TM)$
is the group of automorphisms which preserve $\phi$ at each point. 
The linearized action is 
\[
\lambda_\phi:\mathrm{End}(TM)\to\Lambda^k(M),\quad 
\lambda_\phi(A) = A\cdot\phi = \phi(A\cdot,\ldots,\cdot) + \cdots + \phi(\cdot,\ldots,A\cdot).
\]
The kernel of this action is, at each point, the Lie algebra of the isotropy subgroup; its image is the tangent space of the orbit. 

Decomposing $A$ into its symmetric and skew-symmetric parts, $A=A^{\mathrm{sym}}+A^{\mathrm{skew}}$, 
provides a corresponding decomposition of $\lambda_\phi(A)$.

Let us focus on the following special case.
Given $Y\in\Gamma(TM)$, we shall set $A=\nabla Y$. A computation then shows
\[
\lambda_\phi(\nabla Y) = \mathcal{L}_Y\phi -\nabla_Y\phi = d(Y\lrcorner \phi) -\nabla_Y\phi. 
\]
Indeed, 
\[
\begin{split}
\lambda_\phi(\nabla Y)(X_1,\ldots,X_k) &= \phi(\nabla_{X_1}Y,X_2,\ldots,X_{k}) + \cdots +   \phi(X_1,\ldots,\nabla_{X_{k}}Y)\\
								 &= \phi(\nabla_{Y}X_1,X_2,\ldots,X_{k}) + \phi([X_1,Y],X_2,\ldots,X_{k})\\
								 &\quad +\cdots+ \phi(X_1,\ldots,\nabla_Y{X_{k}}) +  \phi(X_1,\ldots,[{X_{k}},Y]) \\
								 &= \phi(\nabla_{Y}X_1,X_2,\ldots,X_{k}) + \cdots + \phi(X_1,\ldots,\nabla_Y{X_{k}}) \\
								 &\quad -\phi([Y,X_1],X_2,\ldots,X_{k})-\cdots- \phi(X_1,\ldots,[Y,{X_{k}}])\\
								 &\quad \pm Y(\phi(X_1,\ldots,X_k))\\
								 &= (-\nabla_Y\phi+\mathcal{L}_Y\phi)(X_1,\ldots,X_k). 
\end{split}
\]

Recall that
\[
(\nabla Y)^{\mathrm{sym}} = \frac12 \mathcal{L}_Yg,\qquad (\nabla Y)^{\mathrm{skew}} = \frac12 dY^\flat, 
\]
where we are identifying symmetric and skew-symmetric endomorphisms with symmetric and skew-symmetric 2-covariant tensors, respectively. 

We are especially interested in the case where $Y=\nabla f = (df)^\sharp$ is the gradient of a smooth function $f:M\rightarrow\R$. Then 
\[
\begin{split}
(\nabla Y)^{\mathrm{sym}}		&= \frac12 \mathcal{L}_{\nabla f}g = \mathrm{Hess}_g(f),\\
(\nabla Y)^{\mathrm{skew}} 	&= \frac12 d(\nabla f)^\flat = \frac12ddf =0.
\end{split}
\]
This explains why, from now on, we shall focus on symmetric endomorphisms. It also leads to Harvey-Lawson's formula 
\[
\lambda_\phi(\nabla\nabla f) = d(\nabla f\lrcorner \phi) -\nabla_{\nabla f}\phi.
\] 

\begin{definition}[\cite{HL:intropotential}]\label{Def:phiHess}
Let $(M,g)$ be a Riemannian manifold and let $\phi\in\Lambda^k(M)$ be a calibration. The $\phi$-Hessian of $f\in \Lambda^0(M)$ is given by
\[
\mathcal{H}^\phi(f) = \lambda_\phi(\nabla\nabla f) = dd^\phi f -\nabla_{\nabla f}\phi.
\]
In particular, if $\phi$ is parallel the $\phi$-Hessian reduces to the $dd^\phi$-operator.  
\end{definition}

\begin{remark}
Notice that the standard Hessian, $\mathrm{Hess}_g(f)$, is a symmetric form. The $\phi$-Hessian, $\mathcal{H}^\phi(f)$, is an alternating form. 

We will see below that, on K\"ahler manifolds, the $\phi$-Hessian is equivalent to 
the $i\partial\bar\partial$ operator. 
As seen above, this operator is usually not defined in terms of group actions. 
Harvey-Lawson's emphasis on the role of group actions thus adds an interesting twist even to the classical theory. 
\end{remark}

\section{K\"ahler manifolds, revised}

Let $(M,J,\omega)$ be a compact K\"ahler manifold of real dimension $2n$, with K\"ahler metric $h = \omega(\cdot,J\cdot)$. 
The 2-form $\omega$ is a calibration so we are in the setting of Section \ref{s:Hessian} with $k=2$ and $\phi=\omega$. 
Our first goal is to review the $\lambda_\phi$ map, here denoted $\lambda_\omega$, and relate/adapt it to the specific circumstances of K\"ahler geometry. 
For the reason mentioned in Section \ref{s:Hessian}, we shall restrict it to symmetric endomorphisms.

Since $\nabla\omega=0$, the $\omega$-Hessian reduces to $dd^\omega$. Furthermore, 
\[
d^\omega(f)=\omega(\nabla f,\cdot)=-df\circ J(\cdot) =  (J^{-1})^* df
\]
so $dd^\omega= d(J^{-1})^*d=2i\partial\bar\partial$. In particular, $\Im(dd^\omega)\subseteq \Lambda^{1,1}_\R(M)$.
The Harvey-Lawson theory thus reduces to the classical, complex, theory of the $\partial\bar\partial$ operator: 
the dependence on $\omega$ is only superficial. 
However, as mentioned, in the compact K\"ahler case this operator has stronger properties than usual, exemplified by the global $\partial\bar\partial$ lemma. 
Our second goal is to provide a direct proof of this result, using the Harvey-Lawson formalism.

A final comment: the relationship between $dd^\omega f$ and $\Hess(f)$ is explicit in the definition of the $\omega$-Hessian. Alternatively, one can check that
\[
dd^\omega f(X,JX)=X(Xf)+JX(JX f)+df(J[X,JX]).
\]
Furthermore,
\[
J[X,JX] =J(\nabla_X JX-\nabla_{JX}X) = -\nabla_XX-\nabla_{JX}JX.
\]
Both methods lead to the conclusion that 
\[
dd^\omega f(X,JX)=\Hess(f)(X,X)+\Hess(f)(JX,JX).
\]

\subsection{Linear algebra}\label{SecKahlerLA}

Let $V$ be a real $2n$-dimensional vector space endowed with a complex structure $J$.
Let $S^2V^*$ denote the space of real symmetric 2-forms on $V$. 
Set
\[
\begin{split}
S^2_J		&\coloneqq\{h\in S^2V^*~:~J^*h=h\},\\
\Lambda_\R^{1,1}	&\coloneqq \{\alpha\in\Lambda^{1,0}\wedge\Lambda^{0,1}~:~\alpha=\bar\alpha\}
				=\{\alpha\in\Lambda^2V^*~:~J^*\alpha=\alpha\}.
\end{split}
\]
The group $\GL(V,J)\simeq\GL(n,\C)$ acts irreducibly on both spaces. 

\begin{lem} The map
\[
\mathrm{i}_J:S^2_J \to \Lambda_\R^{1,1},\quad h\mapsto h(J\cdot,\cdot)
\]
is a $\GL(V,J)$-equivariant isomorphism. 
Its inverse is the map $\alpha\mapsto\alpha(\cdot, J\cdot)$.
\end{lem}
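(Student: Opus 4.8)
The statement is a piece of elementary linear algebra, and the natural strategy is to verify the three required properties in turn: that $\mathrm{i}_J$ lands in $\Lambda^{1,1}_\R$, that the proposed formula $\alpha \mapsto \alpha(\cdot, J\cdot)$ is a two-sided inverse, and that both maps are $\GL(V,J)$-equivariant. First I would check well-definedness: given $h \in S^2_J$, set $\alpha \coloneqq h(J\cdot, \cdot)$ and verify $\alpha$ is alternating, using $J^2 = -\mathrm{id}$ together with the symmetry and $J$-invariance of $h$; namely $\alpha(X,Y) = h(JX,Y) = h(JX, J^{-1}(JY)) = -h(JX, J(JY))$ and then one uses $J^*h = h$ in the form $h(JX, JY) = h(X,Y)$ to rewrite $h(JX, JY)$ back, comparing with $\alpha(Y,X) = h(JY, X)$. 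Then I would confirm $J^*\alpha = \alpha$, which is immediate from the definition and $J^*h = h$, so indeed $\alpha \in \Lambda^{1,1}_\R$ by the second characterization recorded in the excerpt.

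Next I would check that $\alpha \mapsto \alpha(\cdot, J\cdot)$ is inverse to $\mathrm{i}_J$ on both sides. Composing one way: $\mathrm{i}_J(h)$ evaluated as $(X,Y) \mapsto \mathrm{i}_J(h)(X, JY) = h(JX, JY) = h(X,Y)$, using $J$-invariance of $h$, so we recover $h$. Composing the other way: starting from $\alpha \in \Lambda^{1,1}_\R$, set $g(X,Y) \coloneqq \alpha(X, JY)$; one must check $g$ is symmetric and $J$-invariant, i.e. $g \in S^2_J$, and that $\mathrm{i}_J(g) = \alpha$. Symmetry of $g$ follows from $\alpha$ being alternating and $J^*\alpha = \alpha$: $g(Y,X) = \alpha(Y, JX) = -\alpha(JX, Y) = -\alpha(J(JX), J Y) = -\alpha(-X, JY) = \alpha(X, JY) = g(X,Y)$. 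The identity $\mathrm{i}_J(g)(X,Y) = g(JX, Y) = \alpha(JX, JY) = \alpha(X,Y)$ closes this direction. Finally, equivariance: for $F \in \GL(V,J)$, meaning $FJ = JF$, one computes directly $\mathrm{i}_J(F^*h)(X,Y) = (F^*h)(JX, Y) = h(FJX, FY) = h(JFX, FY) = \mathrm{i}_J(h)(FX, FY) = (F^*\mathrm{i}_J(h))(X,Y)$, so $\mathrm{i}_J \circ F^* = F^* \circ \mathrm{i}_J$. Linearity of $\mathrm{i}_J$ is obvious.

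There is no serious obstacle here: every step is a one-line manipulation with $J^2 = -\mathrm{id}$ and the (anti)symmetry hypotheses. The only place demanding a modicum of care is bookkeeping the sign conventions — in particular keeping straight the relation between $J^*h = h$ for symmetric tensors (which unwinds to $h(JX, JY) = h(X, Y)$) and $J^*\alpha = \alpha$ for 2-forms, and making sure the formula for the inverse uses $J\cdot$ in the correct slot. Since $\GL(V,J)$ acts irreducibly on each of the two spaces (as asserted in the excerpt), equivariance already forces $\mathrm{i}_J$ to be either zero or an isomorphism by Schur's lemma, so strictly speaking once well-definedness and nonvanishing are established the isomorphism property is automatic; nonetheless exhibiting the explicit inverse is cleaner and is what the statement asks for.
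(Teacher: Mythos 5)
Your verification is correct, and the sign bookkeeping (using $J^*h=h$ in the form $h(J\cdot,J\cdot)=h(\cdot,\cdot)$, and likewise for $\alpha$) all checks out. The paper states this lemma without proof as standard linear algebra, and your direct computation of well-definedness, the two-sided inverse, and equivariance is exactly the argument being taken for granted; the Schur's lemma remark is a valid shortcut but, as you note, not needed once the explicit inverse is exhibited.
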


For the purposes of K\"ahler geometry we shall also fix a positive $h\in S^2_J$, corresponding to some $\omega=\mathrm{i}_J(h)\in\Lambda_\R^{1,1}$. 
Consider the corresponding subgroup $\U(h)\cong\U(n)$. It preserves the line $\R \cdot h$, equivalently $\R \cdot \omega$, 
leading to splittings into U$(h)$-invariant irreducible subspaces 
\[
S^2_J=\R\cdot h\oplus S^2_{J,0}, \qquad \Lambda_\R^{1,1}=\R\cdot\omega\oplus \Lambda^{1,1}_0,
\]
where $S^2_{J,0} = \{a\in S^2_J ~:~ \mathrm{tr}_ha=0 \}$.

Now consider the space $\mathrm{Sym}(V)$ of symmetric endomorphisms. It decomposes into irreducible U$(h)$-invariant subspaces as follows:
\[
\mathrm{Sym}(V) = \R\cdot \mathrm{Id} \oplus \mathrm{Sym}^{+}_{0} \oplus  \mathrm{Sym}^{-}, 
\]
where 
\[
\begin{split}
\mathrm{Sym}^{+}_{0}	&\coloneqq \left\{A \in \mathrm{Sym}(V) ~:~ AJ = JA \mbox{ and } \tr(A)=0 \right\},\\
\mathrm{Sym}^{-}		&\coloneqq \left\{A \in \mathrm{Sym}(V) ~:~  AJ = -JA \right\}.
\end{split}
\]
Note that $\R\cdot \mathrm{Id} \oplus \mathrm{Sym}^{+}_{0}=\mathrm{Sym}_J$ is the space of symmetric endomorphisms commuting with $J$. 
It is clearly isomorphic to $S^2_J=\R\cdot h\oplus S^2_{J,0}$ via 
\[
s:\mathrm{Sym}_J\to S^2_J,\quad s(A) = h(A\cdot,\cdot).
\] 

\smallskip

The restriction of the map $\lambda_\omega:\mathrm{End}(V)\to\Lambda^2V^*$ to $\mathrm{Sym}(V)$ is given by
\[
\lambda_\omega(A)	= \omega(A\cdot,\cdot) + \omega(\cdot,A\cdot) = h(JA\cdot,\cdot) + h(J\cdot,A\cdot) = h((JA+AJ)\cdot,\cdot), 
\]
for every $A\in\mathrm{Sym}(V)$.
From this, we see that its kernel coincides with $\mathrm{Sym}^{-}$ and we obtain the following. 
\begin{lem}
The map $\mathrm{i}_\omega\coloneqq \frac12 \lambda_\omega|_{\mathrm{Sym}_J}$ gives an isomorphism
\[
\mathrm{i}_\omega : \R\cdot \mathrm{Id} \oplus \mathrm{Sym}^{+}_{0} \to \R\cdot\omega \oplus \Lambda^{1,1}_0,\quad
\mathrm{i}_\omega(A) = h(JA\cdot,\cdot) = \omega(A\cdot,\cdot). 
\]
In particular, $\mathrm{i}_J=\mathrm{i}_\omega\circ s^{-1}$. 
\end{lem}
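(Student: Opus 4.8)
The plan is to obtain the statement as an immediate consequence of the two preceding lemmas, by checking that $\mathrm{i}_\omega = \mathrm{i}_J\circ s$. First I would record the explicit formula on $\mathrm{Sym}_J$: since such $A$ commute with $J$, the identity $\lambda_\omega(A) = h((JA+AJ)\cdot,\cdot)$ already established reduces to $\lambda_\omega(A) = 2\,h(JA\cdot,\cdot)$, so $\mathrm{i}_\omega(A) = h(JA\cdot,\cdot)$; and because $\omega = \mathrm{i}_J(h) = h(J\cdot,\cdot)$ and $AJ = JA$, this also equals $\omega(A\cdot,\cdot)$, which is the asserted formula. Comparing with $\mathrm{i}_J(s(A))(X,Y) = s(A)(JX,Y) = h(AJX,Y) = h(JAX,Y)$ shows $\mathrm{i}_\omega = \mathrm{i}_J\circ s$ as maps on $\mathrm{Sym}_J$.

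It then remains to see that this factorization respects the displayed direct-sum decompositions. The isomorphism $s$ sends $\mathrm{Id}\mapsto h$, and since $\tr_h\big(h(A\cdot,\cdot)\big) = \tr(A)$ it sends $\mathrm{Sym}^+_0$ into $S^2_{J,0}$; being bijective onto $S^2_J = \R\cdot h\oplus S^2_{J,0}$, it restricts to isomorphisms $\R\cdot\mathrm{Id}\to\R\cdot h$ and $\mathrm{Sym}^+_0\to S^2_{J,0}$. Similarly $\mathrm{i}_J(h) = \omega$, and as $\mathrm{i}_J$ is $\GL(V,J)$-equivariant --- hence $\U(h)$-equivariant --- it maps the irreducible $\U(h)$-module $S^2_{J,0}$ isomorphically onto a $\U(h)$-submodule of $\Lambda_\R^{1,1}$; by Schur's lemma and the irreducible splitting $\Lambda_\R^{1,1} = \R\cdot\omega\oplus\Lambda^{1,1}_0$, this image must be $\Lambda^{1,1}_0$. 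Composing, $\mathrm{i}_\omega = \mathrm{i}_J\circ s$ is an isomorphism $\R\cdot\mathrm{Id}\oplus\mathrm{Sym}^+_0\to\R\cdot\omega\oplus\Lambda^{1,1}_0$, and the final identity $\mathrm{i}_J = \mathrm{i}_\omega\circ s^{-1}$ is then trivial.

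As a self-contained alternative not relying on the earlier lemmas, one checks directly that $\mathrm{i}_\omega$ lands in $\Lambda_\R^{1,1}$: for $A\in\mathrm{Sym}_J$ the endomorphism $JA$ is $h$-skew-symmetric (using that $A$ is $h$-symmetric, $J$ is $h$-skew, and $AJ=JA$), so $h(JA\cdot,\cdot)$ is a genuine $2$-form, and a short computation gives $J^*\big(h(JA\cdot,\cdot)\big) = h(JA\cdot,\cdot)$, i.e.\ type $(1,1)$. Injectivity is immediate from the fact, already noted, that $\ker\lambda_\omega|_{\mathrm{Sym}(V)} = \mathrm{Sym}^-$, which meets $\mathrm{Sym}_J$ only in $0$ since an endomorphism commuting and anticommuting with the invertible $J$ is zero; as $\dim\mathrm{Sym}_J = n^2 = \dim\Lambda_\R^{1,1}$, injectivity upgrades to a bijection, and the matching of summands follows from $\U(h)$-equivariance exactly as above. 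I do not expect a serious obstacle here: the only real care needed is in the bookkeeping of the Hermitian conventions ($\omega = h(J\cdot,\cdot)$ and $J$ being $h$-skew) and in pinning down which irreducible summand is hit by $\mathrm{i}_J(S^2_{J,0})$, which is precisely where the irreducibility statements recorded above do the work.
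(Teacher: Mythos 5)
Your proposal is correct and follows essentially the same route as the paper, which states the lemma as an immediate consequence of the preceding computation $\lambda_\omega(A)=h((JA+AJ)\cdot,\cdot)$ (with kernel $\mathrm{Sym}^-$) together with the earlier lemma on $\mathrm{i}_J$; your identification $\mathrm{i}_\omega=\mathrm{i}_J\circ s$ and the equivariance/Schur argument for matching the summands simply make explicit what the paper leaves implicit.
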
 

\begin{example}
 
The $S^2_J$-component of $\Hess(f)$ is $1/2(\Hess(f)+J^*\Hess(f))$. 
Applying $\mathrm{i}_J$ confirms the above relationship between the $\omega$-Hessian and the standard Hessian.
Roughly speaking: if we take into account the projection and the fact that $\mathrm{i}_J$ is an isomorphism, 
we may say that the $\omega$-Hessian is equivalent to ``half'' of the standard Hessian.
\end{example}

\subsection{The global $\partial\overline{\partial}$ lemma, revised}\label{SecKahlerdd}

Recall from Section \ref{s:dedebar} the statement of the global $\partial\bar\partial$ lemma. 
We are mainly interested in the case of $(1,1)$ forms. Roughly speaking, in this case the lemma can be rephrased as follows: 
the operators $d,\partial,\bar\partial$ and $\partial\bar\partial$ have the same image in $\Lambda^{1,1}(M)\cap\Ker(d)$.
The proof further shows that this image can be characterized as the orthogonal space $(\mathcal{H}^{1,1})^\perp$, again
restricted to $\Lambda^{1,1}(M)\cap\Ker(d)$.

\begin{remark}These operators certainly do not have the same image in $\Lambda^{2}(M,\C)$. 
Intuitively, the restriction to $\Lambda^{1,1}(M)$ introduces a strong constraint on $d$, forcing it to coincide with $\partial\bar\partial$. 
These operators automatically have image in $\Ker(d)$. 
Adding the restriction to $\Ker(d)$ imposes a strong constraint on $\partial$ (or $\bar\partial$), allowing for the complete result.
\end{remark} 

This formulation brings us closer to the Hodge decomposition viewpoint of this paper. The goal of this section is to complete this transition, also reversing the process by showing how to first prove the Hodge decomposition, then obtain the $\partial\bar\partial$ lemma as a corollary. 

The formulation of the $\partial\bar\partial$ lemma given here allows us to restrict to real $(1,1)$ forms and functions, and is optimally suited to extensions to other geometries. For this reason, we shall provide full details of the proof. We will use the fact that, on K\"ahler manifolds, the Hodge Laplacian preserves types. 
This follows from the K\"ahler identities, specifically from the fact that the Hodge Laplacian coincides with the Dolbeault Laplacian, up to a constant. 
It is independent of K\"ahler Hodge theory.

\begin{lem}\label{l:Kinjectivity}
The principal symbol $\sigma(dd^\omega)$ of the linear differential operator $dd^\omega:\Lambda^0(M)\rightarrow\Lambda^{1,1}_\R(M)$ 
is given by the homomorphisms
\[ 
\sigma(dd^\omega)|_{x}(\xi):\R\to \Lambda^{1,1}_\R(T^*_xM),\quad \sigma(dd^\omega)|_{x}(\xi):c \mapsto c\,\xi\wedge(\xi^\sharp \lrcorner \omega),
\]
for all $x\in M$ and $\xi \in T^*_xM$. 
This symbol is injective. 
\end{lem}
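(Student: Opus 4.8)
The plan is to compute the principal symbol directly from the formula $dd^\omega f = d(d^\omega f)$, where $d^\omega f = (J^{-1})^* df = -df\circ J$, and then check injectivity by a short linear-algebra argument. First I would recall that for a first-order operator the principal symbol is obtained by the formal substitution $d \mapsto \xi\wedge(\cdot)$ and, for the algebraic factor $(J^{-1})^*$, by the identity map (it is order zero). Since $dd^\omega = d\circ(J^{-1})^*\circ d$ is a composition of a first-order operator, an algebraic operator, and a first-order operator, its symbol is the composition of the respective symbols; concretely, acting on $c\in\R\cong\Lambda^0(T_x^*M)$, the inner $d$ contributes $c\,\xi$, the algebraic factor $(J^{-1})^*$ sends the $1$-form $\xi$ to $(J^{-1})^*\xi = -\xi\circ J = \xi^\sharp\lrcorner\omega$ (using $\omega(\cdot,J\cdot)=h$ and the conventions fixed above, so that $(J^{-1})^*df = \omega(\nabla f,\cdot)$ as already noted in Section~4), and the outer $d$ contributes wedging with $\xi$. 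This yields $\sigma(dd^\omega)|_x(\xi):c\mapsto c\,\xi\wedge(\xi^\sharp\lrcorner\omega)$, as claimed; I would also remark that the image indeed lies in $\Lambda^{1,1}_\R(T_x^*M)$, consistently with $\Im(dd^\omega)\subseteq\Lambda^{1,1}_\R(M)$.

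Next I would prove injectivity of the symbol, i.e. that for $\xi\neq 0$ the $2$-form $\xi\wedge(\xi^\sharp\lrcorner\omega)$ is nonzero. The cleanest route is to evaluate it on the pair $(\xi^\sharp, J\xi^\sharp)$. Writing $e\coloneqq\xi^\sharp$ (so $|e|\neq 0$), we have $(\xi\wedge(e\lrcorner\omega))(e,Je) = \xi(e)\,(e\lrcorner\omega)(Je) - \xi(Je)\,(e\lrcorner\omega)(e) = |e|^2\,\omega(e,Je) - 0 = |e|^2\,h(e,e) = |e|^4 \neq 0$, using $\xi(e)=|e|^2$, $\xi(Je) = h(e^\sharp\!,Je)=\omega(e,JJe)\cdot(\text{sign})$ — more directly $\xi(Je)=h(e,Je)=\omega(e,JJe)=-\omega(e,e)=0$ by skew-symmetry, and $\omega(e,Je)=h(e,e)=|e|^2$. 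Hence the $2$-form does not vanish, so the homomorphism $c\mapsto c\,\xi\wedge(\xi^\sharp\lrcorner\omega)$ has trivial kernel. This also matches the pointwise version of the earlier identity $dd^\omega f(X,JX)=\Hess(f)(X,X)+\Hess(f)(JX,JX)$: at the symbol level the ``Hessian'' becomes $c\,\xi\otimes\xi$, and plugging $X=e$ gives $2c|e|^4$, up to the factor of $2$ coming from the $\tfrac12$ in $\mathrm{i}_\omega$.

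The only genuinely delicate point is bookkeeping of signs and conventions: the paper uses $J^*\alpha=\alpha(J\cdot,\dots,J\cdot)$, $h=\omega(\cdot,J\cdot)$, $d^\omega f=\omega(\nabla f,\cdot)=-df\circ J=(J^{-1})^*df$, and $\mathrm{i}_\omega=\tfrac12\lambda_\omega|_{\Sym_J}$, and one must make sure the identification $(J^{-1})^*\xi = \xi^\sharp\lrcorner\omega$ used in the symbol is consistent with all of these. I would verify this once, in a displayed line, via $(J^{-1})^*\xi(X) = \xi(J^{-1}X) = g(e, J^{-1}X) = -g(e,JX) = -\omega(e,J^{-1}JX)\cdot(\dots)$ — or more robustly: $(J^{-1})^*df = \omega(\nabla f,\cdot)$ is already established in Section~4, so the symbol of the algebraic operator $(J^{-1})^*$ on a $1$-form $\xi$ is exactly $\xi^\sharp\lrcorner\omega$, and no further sign chase is needed. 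I expect this convention-tracking, rather than any conceptual difficulty, to be the main obstacle; the injectivity itself is a one-line evaluation. Finally I would note that injectivity of $\sigma(dd^\omega)$ is precisely the overdeterminedellipticity statement that will feed into the Hodge-decomposition machinery of Appendix~\ref{Sect:AnalyticTools}.
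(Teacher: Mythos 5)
Your proposal is correct. The symbol formula follows, as you say, by composing the symbols of the two copies of $d$ with the zeroth-order factor $(J^{-1})^*$, via the identity $(J^{-1})^*\xi=\xi^\sharp\lrcorner\omega$ already recorded in Section~4; the paper takes this part for granted and its proof addresses only injectivity. There the two arguments diverge slightly: the paper normalizes $\xi$ to $a\,e^1$ using the transitivity of $\U(n)$ on the unit sphere and reads off $\xi\wedge(\xi^\sharp\lrcorner\omega)=a^2e^{12}\neq 0$ in a unitary coframe, whereas you evaluate the $2$-form directly on the pair $(\xi^\sharp,J\xi^\sharp)$ and obtain $|\xi|^4\neq 0$. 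Both are one-line verifications; yours is coordinate-free and needs no group action, while the paper's normalize-and-compute template is the one it then reuses for the harder symbol computations (the auxiliary operator $Q$, and the $G_2$ and $\SU(3)$ analogues), where testing against a single pair of vectors would no longer suffice. The only blemish is the closing aside comparing with $dd^\omega f(X,JX)=\Hess(f)(X,X)+\Hess(f)(JX,JX)$: at the symbol level the two Hessian terms contribute $\xi(\xi^\sharp)^2+\xi(J\xi^\sharp)^2=|\xi|^4+0$, so the value is $c\,|\xi|^4$ rather than $2c\,|\xi|^4$; this is a side remark and does not affect the proof.
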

\begin{proof} 
Let $x\in M$ and choose a unitary coframe $(e^1,\ldots,e^{2n})$ of $T^*_xM$ so that 
\[
\omega|_x = \sum_{k=1}^n e^{2k-1}\wedge e^{2k}.
\]
Let $\xi\in T^*_xM\smallsetminus\{0\}$. 
Since $\mathrm{U}(n)$ acts transitively on the unit sphere in $T^*_xM$, we may assume that $\xi = a e^1$, with $a\neq 0$. Then 
\[
\xi\wedge(\xi^\sharp \lrcorner \omega) = a^2\,e^{12}.  
\]
The thesis then follows. 
\end{proof}

\begin{thm}\label{ThmSplittingKahler}
Let $(M,J,\omega)$ be a compact K\"ahler manifold.
Then there exists an $L^2$-orthogonal decomposition 
\[
\Lambda^{1,1}_\R(M)\cap \Ker(d)=\Im(dd^\omega)\oplus\mathcal{H}^{1,1}_\R, 
\]
where $\mathcal{H}^{1,1}_\R$ denotes the space of real harmonic $(1,1)$ forms.
\end{thm}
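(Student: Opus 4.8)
The plan is to deduce the decomposition from the general elliptic machinery of Appendix~\ref{Sect:AnalyticTools}, applied to the operator $P \coloneqq dd^\omega$, together with the fact that on a compact K\"ahler manifold the Hodge Laplacian $\Delta$ preserves types. First I would set up the correct ambient space. Since $\Im(dd^\omega) \subseteq \Lambda^{1,1}_\R(M)$, and since $d \circ dd^\omega = 0$, the image automatically sits inside the space $V \coloneqq \Lambda^{1,1}_\R(M) \cap \Ker(d)$ appearing in the statement. So the goal is a Hodge-type splitting of this $V$. The closedness condition $d\alpha = 0$ is a differential constraint, so following the philosophy laid out in the Introduction I would incorporate it into the auxiliary operator: set $Q \coloneqq (d, \Delta\,\Pi)$ or, more simply, work with $d$ together with the restriction to $(1,1)$-forms, and aim to identify $\Ker(Q)$ with $\mathcal{H}^{1,1}_\R$.

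The key analytic input is the Hodge decomposition of the full space of real $2$-forms, $\Lambda^2_\R(M) = \Im(\Delta) \oplus \mathcal{H}^2_\R = \Im(d) \oplus \Im(d^*) \oplus \mathcal{H}^2_\R$, and the fact that $\Delta$ commutes with the projection $\Pi \colon \Lambda^2_\R(M) \to \Lambda^{1,1}_\R(M)$. The latter yields $\Lambda^{1,1}_\R(M) = \Im(\Delta|_{\Lambda^{1,1}}) \oplus \mathcal{H}^{1,1}_\R$ and, intersecting with $\Ker(d)$, the standard argument gives $V = \Lambda^{1,1}_\R(M) \cap \Ker(d) = d\big(\Lambda^1(M)\big)^{1,1}_{\text{closed part}} \oplus \mathcal{H}^{1,1}_\R$, where the first summand is the space of $(1,1)$-forms that are $d$-exact. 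It then remains to show that every $d$-exact real $(1,1)$-form lies in $\Im(dd^\omega)$: this is exactly the content of the global $\partial\bar\partial$ lemma for $(1,1)$-forms, but I want to obtain it the other way around, so instead I would prove the decomposition directly and read off the lemma afterwards (as Corollary~\ref{cor:ddomega}).

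Concretely, the cleanest route is to apply the abstract result of the Appendix to the operator $dd^\omega \colon \Lambda^0(M) \to \Lambda^{1,1}_\R(M)$: Lemma~\ref{l:Kinjectivity} gives injectivity of the principal symbol $\sigma(dd^\omega)$, which is the ellipticity-type hypothesis needed to conclude that $\Im(dd^\omega)$ is closed in $L^2$ and that $\Lambda^{1,1}_\R(M) = \Im(dd^\omega) \oplus \Ker\big((dd^\omega)^*\big)$ orthogonally, with the second factor finite-dimensional. The remaining, and main, task is to identify $\Ker\big((dd^\omega)^*\big) \cap \Ker(d)$ with $\mathcal{H}^{1,1}_\R$. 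One inclusion is immediate: a harmonic $(1,1)$-form is $d$-closed and, being $\Delta$-closed hence $d^*$-closed and orthogonal to all exact forms, in particular orthogonal to $\Im(dd^\omega)$, so it lies in $\Ker\big((dd^\omega)^*\big)$. For the reverse inclusion I would argue that if $\alpha \in \Lambda^{1,1}_\R(M) \cap \Ker(d)$ is $L^2$-orthogonal to $\Im(dd^\omega)$, then using $2i\partial\bar\partial = d(J^{-1})^* d J^*$ and the type-preservation of $\Delta$, $\alpha$ must be orthogonal to all real $(1,1)$-forms of the type $d\beta$; combined with $d\alpha = 0$ this forces $d^*\alpha = 0$, hence $\Delta\alpha = 0$.

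The hard part is precisely this last identification, i.e.\ showing that $d$-closedness plus orthogonality to $\Im(dd^\omega)$ upgrades to full harmonicity. This is where the K\"ahler hypothesis genuinely enters, through the type-preservation of the Laplacian: one needs that the orthogonal complement of $\Im(dd^\omega)$ inside the $d$-exact $(1,1)$-forms is trivial, equivalently that $\Im(d) \cap \Lambda^{1,1}_\R(M) \subseteq \Im(dd^\omega)$. I expect to handle this by decomposing a primitive $1$-form $\beta$ with $d\beta$ of type $(1,1)$ into $(1,0)$ and $(0,1)$ parts, noting that the $(2,0)$ and $(0,2)$ components of $d\beta$ vanish, and then invoking the $\partial$- and $\bar\partial$-Hodge decompositions (valid since $\Delta = 2\Delta_{\bar\partial}$ on a compact K\"ahler manifold) to write the relevant pieces as $\partial f_1$, $\bar\partial f_2$ up to harmonic terms, which then cancel against $d\alpha = 0$. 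Once this is in place, the orthogonal splitting of $V$ follows by intersecting the global Hodge decomposition of $\Lambda^{1,1}_\R(M)$ with $\Ker(d)$ and substituting $\Im(dd^\omega)$ for the exact part.
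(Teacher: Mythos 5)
Your overall strategy does prove the theorem, but it is a genuinely different route from the paper's, and in a direction the paper deliberately avoids. The paper never invokes Dolbeault Hodge theory: it computes the formal adjoint $(dd^\omega)^t=-\tfrac{1}{(n-1)!}\star(dd^*(\cdot)\wedge\omega^{n-1})$, packages the constraint $d\alpha=0$ into an auxiliary operator $Q=((dd^\omega)^t,d)$ with injective principal symbol, and identifies $\Ker(Q)$ with $\mathcal{H}^{1,1}_\R$ by a direct computation (type-preservation of $\Delta$ plus the primitive-form identity $\star\alpha=-\tfrac{1}{(n-2)!}\,\omega^{n-2}\wedge\alpha$). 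The global $\partial\bar\partial$ lemma is then read off as Corollary \ref{cor:ddomega}. You instead reduce the key inclusion $\Im(d)\cap\Lambda^{1,1}_\R(M)\subseteq\Im(dd^\omega)$ to the classical global $\partial\bar\partial$ lemma, proved via the $\partial$- and $\bar\partial$-Hodge decompositions and the identity $\Delta=2\Delta_{\bar\partial}$; combined with the fact that harmonic projection preserves types, this does yield the splitting, and is in fact shorter. What it costs is exactly what the paper is trying to buy: your argument leans on holomorphic/Dolbeault machinery that has no analogue for $G_2$ or $\SU(3)$, whereas the paper's adjoint-plus-auxiliary-operator template is the one that transfers verbatim to Theorems \ref{thm:G2Hodge} and \ref{thm:SU3Hodge}. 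Note also that your stated intention ("prove the decomposition directly and read off the lemma afterwards") is reversed in execution: your last paragraph proves the lemma first.

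Two concrete inaccuracies. First, $\Ker((dd^\omega)^*)$ is \emph{not} finite-dimensional: it is the $L^2$-orthogonal complement of the image of a map out of functions inside the much larger space $L^2(\Lambda^{1,1}_\R(M))$ (e.g.\ every coclosed $(1,1)$-form lies in $\Ker((dd^\omega)^t)$). Finite-dimensionality only appears after intersecting with $\Ker(d)$. Second, if you do want to run the $L^2$ route, there is a regularity gap you do not address: the symbol of $(dd^\omega)^t$ alone is not injective (it maps a rank-$n^2$ bundle to a line bundle), so elements of $\Ker((dd^\omega)^*)$ need not be smooth and the passage from the $L^2$ splitting to a splitting of smooth forms is not automatic. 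This is precisely why the paper checks injectivity of the symbol of the \emph{pair} $((dd^\omega)^t,d)$; you mention incorporating $d$ into an auxiliary operator $Q$ at the outset but never carry out that symbol computation. In your classical route these issues evaporate (everything reduces to standard Hodge theory for $\Delta$, $\Delta_\partial$, $\Delta_{\bar\partial}$), so the proof is salvageable as written only if you commit fully to that route and drop the unbounded-operator framework.
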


\begin{proof}
We shall split the proof into several steps.

\begin{enumerate}[1.]

\item Consider the operator
\[
dd^\omega:\Lambda^0(M)\rightarrow \Lambda^{1,1}_\R(M).
\]
As shown in Appendix \ref{Sect:AnalyticTools}, we can act on it in two different ways. Let us first extend it to an operator
\[
dd^\omega:H^2(M)\rightarrow L^2(\Lambda^{1,1}_\R(M)).
\]
Then, Lemma \ref{l:Kinjectivity} implies that it has closed image.

Now let us view it as an unbounded operator $dd^\omega:L^2(M)\rightarrow L^2(\Lambda^{1,1}_\R(M))$, using the domain $D = H^2(M)$. 
We can then apply Proposition \ref{p:abstractdecomp} to obtain a decomposition
\[
L^2(\Lambda^{1,1}_\R(M))=\Im(dd^\omega)\oplus\Ker((dd^\omega)^*),
\]
where $((dd^\omega)^*,D')$ is the adjoint of $(dd^\omega,H^2(M))$ and its domain $D'$ satisfies $\Lambda^{1,1}_\R(M)\subseteq D'\subseteq L^2(\Lambda^{1,1}_\R(M))$.

\item Recall that $(dd^\omega)^*$ is a distributional extension of $(dd^\omega)^t$. Our next task is to provide an explicit expression for $(dd^\omega)^t$.

Recall the splitting 
\[
\Lambda^{1,1}_\R(M)=\Lambda^0(M)\cdot\omega\oplus\Lambda^{1,1}_0(M).
\]
Using this, we will denote the generic element in $\Lambda^{1,1}_\R(M)$ by $u\cdot\omega+\alpha$. Given any $f\in\Lambda^0(M)$,
\begin{align*}
\int_M (dd^\omega f,u\cdot\omega+\alpha)\vol_h&=\int_M (\nabla f\lrcorner\omega,d^*(u\cdot\omega+\alpha))\vol_h\\
&=\int_M (\star(\nabla f\lrcorner\omega),\star(d^*(u\cdot\omega+\alpha)))\vol_h\\
&=-\frac{1}{(n-1)!} \int_M (df\w\omega^{n-1},\star(d^*(u\cdot\omega+\alpha)))\vol_h\\
&=\frac{1}{(n-1)!}\int_Mdf\wedge\omega^{n-1}\wedge d^*(u\cdot\omega+\alpha)\\
&=-\frac{1}{(n-1)!}\int_Mf\,  d(\omega^{n-1}\wedge d^*(u\cdot\omega+\alpha))\\
&=-\frac{1}{(n-1)!}\int_Mf\, dd^*(u\cdot\omega+\alpha)\wedge\omega^{n-1}\\
&=-\frac{1}{(n-1)!}\int_M(f,\star(dd^*(u\cdot\omega+\alpha)\wedge\omega^{n-1}))\vol_h.
\end{align*}
This proves that $(dd^\omega)^t: \Lambda^{1,1}_\R(M)\rightarrow\Lambda^0(M)$ has the following expression
\[
(dd^\omega)^t = {-\frac{1}{(n-1)!}}\star(dd^*(\cdot)\wedge\omega^{n-1}) 
= -\star(dd^*(\cdot)\wedge\star\omega).
\]

\item We now want to refine our understanding of the above operators.

Consider the operator $d:\Lambda^2(M)\rightarrow\Lambda^3(M)$. 
Recall that the operator $dd^\omega$ actually takes values in the subspace $\Ker(d)$, so we can reformulate it as a map
\[
dd^\omega:\Lambda^0(M)\rightarrow\Lambda^{1,1}_\R(M)\cap\Ker(d).
\]
Let $\overline{\Ker(d)}$ denote the $L^2$-closure of $\Ker(d)$. 
The {RHS} is contained in the complete space $L^2(\Lambda^{1,1}_\R(M))\cap\overline{\Ker(d)}$. 
This means that, extending $dd^\omega$ by continuity, we can be more precise than we were previously: we obtain a map
\[
dd^\omega:H^2(M)\rightarrow L^2(\Lambda^{1,1}_\R(M))\cap\overline{\Ker(d)}.
\]
Alternatively, we can think of it as an unbounded map between Hilbert spaces
\[
dd^\omega:L^2(M)\rightarrow L^2(\Lambda^{1,1}_\R(M))\cap\overline{\Ker(d)}
\]
with domain $H^2(M)$. Its adjoint is simply the restriction of the adjoint $(dd^\omega)^*$ found previously:
\[
(dd^\omega)^*|_{\overline{\Ker(d)}}: {L^2(\Lambda^{1,1}_\R(M))\cap\overline{\Ker(d)}} \rightarrow L^2(M),
\]
with domain $D' \cap \overline{\Ker(d)}$. 
In this new setting Proposition \ref{p:abstractdecomp} provides the decomposition
\[
L^2(\Lambda^{1,1}_\R(M))\cap\overline{\Ker(d)}=\Im(dd^\omega)\oplus \Ker\left((dd^\omega)^*|_{\overline{\Ker(d)}}\right).
\]
\item It is convenient to incorporate the restriction to $\overline{\Ker(d)}$ directly into the dual operator. We shall do this in this step and the next.

Consider the auxiliary operator
\[
Q:\Lambda^{1,1}_\R(M)\rightarrow\Lambda^0(M)\oplus\Lambda^3(M), \quad Q: \alpha\mapsto((dd^\omega)^t(\alpha),d\alpha).
\]

Notice that $\Ker(Q)=\Ker ((dd^\omega)^t,d) =  \Ker\left((dd^\omega)^t|_{\Ker(d)}\right)$. 

The principal symbol $\sigma(Q)$ is given by the homomorphisms
\[ 
\sigma(Q)|_x(\xi) : \alpha \mapsto \left(-\star(\xi\wedge(\xi^\sharp\lrcorner\alpha)\wedge\star\omega),\xi\wedge\alpha\right), 
\]
for each $x\in M$ and $\xi \in T^*_xM.$ 
Assume that $\xi\neq 0$ and $\alpha\in \ker(\sigma(Q)|_x(\xi))$, then 
\[
\xi \wedge(\xi^\sharp\lrcorner\alpha) = |\xi|^2 \alpha - \xi^\sharp\lrcorner (\xi \wedge \alpha) = |\xi|^2 \alpha, 
\]
and thus
\[
0 = \star(\xi\wedge(\xi^\sharp\lrcorner\alpha)\wedge\star\omega) = |\xi|^2 \star(\alpha\wedge \star\omega).
\]
Therefore $\alpha\in \Lambda^{1,1}_0(T^*_xM)$. 
Choose a unitary coframe $(e^1,\ldots,e^{2n})$ of $T^*_xM$ and use the $\mathrm{U}(n)$ action to reduce to the case $\xi = a e^1$, 
with $a\neq0$. It is then simple to show that, for every $\alpha\in\Lambda^{1,1}_\R(T^*_xM)$, 
the condition $\xi\wedge\alpha = 0$  gives $\alpha = c\,e^{12}$, for some $c\in\R$.   
Comparing this with the condition $\alpha\in \Lambda^{1,1}_0(T^*_xM)$ shows that $c=0$. 
Therefore the principal symbol is injective, so the usual regularity property holds. 

In particular, consider the distributional extension of $Q = ((dd^\omega)^t,d)$ defined by 
\[
((dd^\omega)^*,d) : D' \to L^2(\Lambda^0(M))\oplus\Lambda^3(M)'.
\]
Then $\Ker((dd^\omega)^*,d) = \Ker (Q)$, so it consists of smooth forms. 

\item We now want to show that $\Ker\left((dd^\omega)^*|_{\overline{\Ker(d)}}\right)=\Ker(Q)$.

On the one hand, 
\[
\Ker\left((dd^\omega)^*|_{\overline{\Ker(d)}}\right) \subseteq \Ker((dd^\omega)^*,d)=\Ker(Q). 
\]
Indeed, any $\alpha$ in the LHS belongs to $D'\cap\overline{\Ker(d)}$, so there exists a sequence 
$\{\alpha_n\}\subset \Lambda^{1,1}_\R(M)$ converging to $\alpha$ such that $d(\alpha_n)=0$. 
Then, for any test form $\beta\in\Lambda^3(M)$ we have
\[
d\alpha(\beta) = \alpha(d^*\beta) = \int_M (\alpha, d^*\beta) = \lim_{n\to\infty}\int_M (\alpha_n, d^*\beta) = \lim_{n\to\infty}\int_M (d\alpha_n, \beta) = 0. 
\] 
On the other hand, it is clear that 
\[
\Ker\left((dd^\omega)^*|_{\overline{\Ker(d)}}\right) \supseteq \Ker\left((dd^\omega)^t|_{\Ker(d)}\right)=\Ker(Q).
\]
This proves the desired equality.
\item We shall now characterize $\Ker(Q)$. 
As a first step, notice that 
\[
(dd^\omega)^t|_{\Ker(d)}=-\frac{1}{(n-1)!}\star(\Delta|_{\Ker(d)}(\cdot)\wedge\omega^{n-1}),
\]
where $\Delta=dd^*+d^*d : \Lambda^{1,1}_\R(M)\to \Lambda^{1,1}_\R(M)$, since $\Delta$ preserves types. 
It follows that
\[
\Ker\left((dd^\omega)^t|_{\Ker(d)}\right)=\Ker(\star(\Delta(\cdot)\wedge\omega^{n-1}))\cap\Ker(d) 
=\Ker(\Delta(\cdot)\wedge\omega^{n-1})\cap\Ker(d).
\]
Since $\Delta$ preserves types and $\Lambda^{1,1}_0(M)=\Ker((\cdot)\wedge\omega^{n-1}))$, we see that 
\[
\Ker(\Delta(\cdot)\wedge\omega^{n-1}) = \R\cdot\omega\oplus\Lambda^{1,1}_0(M), 
\]
whence
\[
\Ker(\Delta(\cdot)\wedge\omega^{n-1})\cap \Ker(d)  
=\R\cdot\omega\oplus\left(\Lambda^{1,1}_0(M)\cap \Ker(d)\right). 
\]
Now, every $\alpha\in\Lambda^{1,1}_0(M)$ satisfies the identity
\[
\star\alpha=\frac{-1}{(n-2)!}\omega^{n-2}\wedge\alpha
\]
(this linear algebra statement holds in any Hermitian vector space). 
In particular, if $\alpha$ is closed then it is co-closed, thus harmonic. This implies that 
\[
\Ker(\Delta(\cdot)\wedge\omega^{n-1})\cap \Ker(d) = \R\cdot\omega\oplus\left(\Lambda^{1,1}_0(M)\cap \Ker(d)\right) = \R\cdot\omega\oplus \mathcal{H}^{1,1}_0 = \mathcal{H}^{1,1}_\R. 
\]

\item Using the previous steps, we can now update the decomposition given at the end of Step 3:
\[
L^2(\Lambda^{1,1}_\R(M))\cap\overline{\Ker(d)}=\Im(dd^\omega)\oplus \mathcal{H}^{1,1}_\R.
\]
This decomposition is formulated in terms of the extended operator $dd^\omega:L^2(M)\rightarrow L^2(\Lambda^{1,1}_\R(M))\cap\overline{\Ker(d)}$. 
Our final step is to obtain a decomposition concerning the original operator 
$dd^\omega:\Lambda^0(M)\rightarrow\Lambda^{1,1}_\R(M)\cap\ker(d)$. 
The idea is to intersect both sides of the new decomposition with $\Lambda^{1,1}_\R(M)$. 

Notice that $\mathcal{H}^{1,1}_\R\subseteq\Lambda^{1,1}_\R(M)$. It follows that
\begin{align*}
\Lambda^{1,1}_\R(M)\cap\ker(d)&=\left(L^2(\Lambda^{1,1}_\R(M))\cap\overline{\Ker(d)}\right)\cap\Lambda^{1,1}_\R(M)\\
&=\left(\Im(dd^\omega)\oplus \mathcal{H}^{1,1}\right)\cap\Lambda^{1,1}_\R(M)\\
&=\left(\Im(dd^\omega)\cap\Lambda^{1,1}_\R(M)\right)\oplus\mathcal{H}^{1,1}_\R.
\end{align*}
Here we are still using the extended operator $dd^\omega$ defined on $L^2(M)$. However, regularity implies that 
\[
\Im(dd^\omega)\cap\Lambda^{1,1}_\R(M)=\Im(dd^\omega),
\]
where now on the RHS we are using the operator $dd^\omega$ defined on $\Lambda^0(M)$.
\end{enumerate}
\end{proof}

Theorem \ref{ThmSplittingKahler} implies the following.

\begin{cor}[Global $dd^\omega$ lemma]\label{cor:ddomega}
Let $(M,J,\omega)$ be a compact K\"ahler manifold. 
Fix a real form $\alpha\in\Lambda^{1,1}_\R(M)$. If $\alpha$ is globally $d$-exact, then it is globally $dd^\omega$-exact: 
$\alpha=dd^\omega f$, for some $f\in \Lambda^0(M)$. 
\end{cor}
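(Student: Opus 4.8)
The plan is to deduce this corollary directly from the orthogonal decomposition established in Theorem \ref{ThmSplittingKahler}. Suppose $\alpha \in \Lambda^{1,1}_\R(M)$ is globally $d$-exact, say $\alpha = d\gamma$. Then in particular $d\alpha = 0$, so $\alpha$ lies in the left-hand side $\Lambda^{1,1}_\R(M) \cap \Ker(d)$ of the decomposition in Theorem \ref{ThmSplittingKahler}. Hence we may write uniquely $\alpha = dd^\omega f + \eta$ with $\eta \in \mathcal{H}^{1,1}_\R$ the harmonic part, and the $L^2$-orthogonality of the splitting tells us $\eta$ is the $L^2$-projection of $\alpha$ onto the harmonic forms. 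The whole content of the corollary is therefore that $\eta = 0$, i.e. that a $d$-exact real $(1,1)$-form has vanishing harmonic component.

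To see this, I would invoke the standard Riemannian Hodge theory on the compact oriented manifold $(M,h)$. Since $\alpha = d\gamma$ is exact, it is $L^2$-orthogonal to $\Ker(\Delta)$, the space of all harmonic forms of degree $2$: indeed for any harmonic $2$-form $\psi$ we have $\int_M (d\gamma, \psi)\,\vol_h = \int_M (\gamma, d^*\psi)\,\vol_h = 0$ because $d^*\psi = 0$. As $\mathcal{H}^{1,1}_\R \subseteq \mathcal{H}^2$ (a real harmonic $(1,1)$-form is in particular a harmonic $2$-form — here one uses that $\Delta$ preserves types, as recalled in the proof of Theorem \ref{ThmSplittingKahler}), it follows that $\alpha \perp \mathcal{H}^{1,1}_\R$. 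By uniqueness of the orthogonal decomposition in Theorem \ref{ThmSplittingKahler}, the harmonic component $\eta$ of $\alpha$ must vanish, so $\alpha = dd^\omega f$ for some $f \in \Lambda^0(M)$, as claimed.

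I do not expect a serious obstacle here: the corollary is genuinely a one-line consequence of the theorem plus the elementary fact that exact forms are $L^2$-orthogonal to harmonic forms. The only point requiring a moment's care is the inclusion $\mathcal{H}^{1,1}_\R \subseteq \mathcal{H}^2$, which rests on the Kähler identities guaranteeing that the Hodge Laplacian preserves the type decomposition — but this is exactly the input already used (and cited) in the proof of Theorem \ref{ThmSplittingKahler}, so it is available to us. One could alternatively phrase the argument without mentioning $\mathcal{H}^2$ at all: the harmonic component $\eta$ in the theorem's decomposition satisfies $\eta = \alpha - dd^\omega f$, hence $\eta$ is exact (being a difference of exact forms), while simultaneously $\eta$ is harmonic; an exact harmonic form on a compact manifold is zero, so $\eta = 0$. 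This second formulation is perhaps the cleanest and is the one I would write up.
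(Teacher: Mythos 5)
Your argument is correct and follows essentially the same route as the paper's proof: both deduce the result from Theorem \ref{ThmSplittingKahler} together with the facts that exact forms are $L^2$-orthogonal to harmonic forms and that $\mathcal{H}^{1,1}_\R\subseteq\mathcal{H}^2$ (since $\Delta$ preserves types). Your closing variant (the harmonic component $\eta=\alpha-dd^\omega f$ is both exact and harmonic, hence zero) is just a repackaging of the same orthogonality argument, so there is nothing substantively different here.
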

\begin{proof}
Recall that on a compact Riemannian manifold exact forms are orthogonal to harmonic forms. 
Recall also that $\mathcal{H}^2(M)\cap \Lambda^{1,1}_\R(M) = \mathcal{H}^{1,1}_\R$. It follows that 
$\Im(d)\cap\Lambda^{1,1}_\R(M)$ is orthogonal to $\mathcal{H}^{1,1}_\R$, so it is contained in $\Im(dd^\omega)$. 
The opposite inclusion is obvious.
\end{proof}

\begin{remark}\label{rem:alternativeKahler}
Let us outline an alternative proof of Theorem \ref{ThmSplittingKahler} which relies instead on the notion of elliptic complexes \cite{AtBo}.
 For every $x\in M$ and $\xi\in T^*_xM\smallsetminus\{0\}$ we have
\[
\Im(\sigma(dd^\omega)|_x(\xi)) 	= \R\cdot \xi \wedge (\xi^\sharp\lrcorner \omega)  
						= \ker(\sigma(d|_{\Lambda^{1,1}_\R(M)})|_x(\xi)).
\]
The first identity follows immediately from Lemma \ref{l:Kinjectivity}. 
The second identity can be shown via the same argument used above to prove the injectivity of the auxiliary operator $Q$. 
This shows that the following complex is elliptic
\[
\Lambda^0(M)\xrightarrow{dd^\omega}{}\Lambda^{1,1}_{\R}(M)\xrightarrow{d}{}\Lambda^3(M). 
\]
The theory of elliptic complexes then implies the existence of a fourth order linear differential operator which is both elliptic and self-adjoint:
\[
\Box \coloneqq (dd^\omega)(dd^\omega)^* + d^*dd^*d : \Lambda^{1,1}_\R(M) \to \Lambda^{1,1}_\R(M).  
\]
This yields the $L^2$-orthogonal splitting 
\[
\Lambda^{1,1}_\R(M)  = \Im(\Box)\oplus\ker(\Box) =  \mathrm{im}(dd^\omega)\oplus \mathrm{im}(d^*) \oplus (\mathrm{ker}(d) \cap \mathrm{ker}((dd^\omega)^*)). 
\] 
The proof of Theorem \ref{ThmSplittingKahler} shows that $\mathrm{ker}(d) \cap \mathrm{ker}((dd^\omega)^*) = \mathcal{H}^{1,1}_\R$. 
The thesis of the theorem then follows after intersecting both sides of this decomposition with $\ker(d)$. 
\end{remark}

\subsection{Cohomology.}\label{sec:cohomologyBC}

Let $M$ be a K\"ahler manifold. The sequence 
\[
\Lambda^0(M)\xrightarrow{dd^\omega}{}\Lambda^{1,1}_{\R}(M)\xrightarrow{d}{}\Lambda^3(M)
\]
produces the cohomology space $H^{1,1}_{BC}(M;\R)\coloneqq \Ker(d)/\Im(dd^\omega)$. 
Its complexification coincides with the usual Bott-Chern cohomology space $H^{1,1}_{BC}(M)$. We can rephrase the above as follows.

\begin{prop}\label{prop:BC}
Assume $M$ is compact K\"ahler. Then the linear map
\[
H^{1,1}_{BC}(M;\R)\rightarrow H^2(M;\R), \quad [\alpha]_{BC}\mapsto [\alpha]
\]
is an injection which identifies the real Bott-Chern cohomology with the subspace of de Rham classes represented 
by the real harmonic forms $\mathcal{H}^{1,1}_\R$.
\end{prop}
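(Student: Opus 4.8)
The plan is to deduce this proposition directly from Theorem \ref{ThmSplittingKahler} and Corollary \ref{cor:ddomega}, which together already encode all the analytic content; the remaining work is purely formal bookkeeping about cohomology classes. First I would observe that the map is well-defined: if $\alpha\in\Lambda^{1,1}_\R(M)$ with $d\alpha=0$, then $[\alpha]\in H^2(M;\R)$ makes sense, and changing the Bott-Chern representative by an element of $\Im(dd^\omega)$ changes $[\alpha]$ by an exact form, since $dd^\omega f = d(d^\omega f)$; hence $[\alpha]_{BC}\mapsto[\alpha]$ descends to a linear map $H^{1,1}_{BC}(M;\R)\to H^2(M;\R)$.

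Next I would prove injectivity. Suppose $[\alpha]_{BC}$ maps to $0$, i.e. $\alpha\in\Lambda^{1,1}_\R(M)\cap\Ker(d)$ is globally $d$-exact. By Corollary \ref{cor:ddomega} this forces $\alpha = dd^\omega f$ for some $f\in\Lambda^0(M)$, so $[\alpha]_{BC}=0$ in $H^{1,1}_{BC}(M;\R)$. This is the crux, and it is immediate from the corollary; there is essentially no obstacle here beyond citing it.

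Finally I would identify the image with the de Rham classes represented by $\mathcal{H}^{1,1}_\R$. Using the orthogonal decomposition of Theorem \ref{ThmSplittingKahler},
\[
\Lambda^{1,1}_\R(M)\cap\Ker(d) = \Im(dd^\omega)\oplus\mathcal{H}^{1,1}_\R,
\]
every Bott-Chern class has a unique representative in $\mathcal{H}^{1,1}_\R$, so the composite $\mathcal{H}^{1,1}_\R\hookrightarrow\Lambda^{1,1}_\R(M)\cap\Ker(d)\twoheadrightarrow H^{1,1}_{BC}(M;\R)$ is an isomorphism. Composing with our map $H^{1,1}_{BC}(M;\R)\to H^2(M;\R)$, the image is exactly $\{[\beta] : \beta\in\mathcal{H}^{1,1}_\R\}$. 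To finish I would note this subspace of $H^2(M;\R)$ is precisely $\mathcal{H}^2(M)\cap\Lambda^{1,1}_\R(M)$ under the de Rham--Hodge isomorphism $H^2(M;\R)\simeq\mathcal{H}^2(M)$, which is standard K\"ahler Hodge theory (the Laplacian preserves types).

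The only point requiring a little care — the potential obstacle — is checking that distinct harmonic representatives give distinct de Rham classes, i.e. that no nonzero $\beta\in\mathcal{H}^{1,1}_\R$ is $d$-exact; but this is immediate since on a compact manifold harmonic forms are $L^2$-orthogonal to exact forms, so $\beta$ exact and harmonic forces $\beta=0$. Equivalently, it follows a second time from injectivity of the map just established. Hence the map is an injection with image the asserted subspace, completing the proof.
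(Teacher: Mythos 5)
Your proposal is correct and follows exactly the route the paper intends: the proposition is stated there as a direct rephrasing of Theorem \ref{ThmSplittingKahler} (which identifies $\mathcal{H}^{1,1}_\R$ as a complement of $\Im(dd^\omega)$ inside $\Lambda^{1,1}_\R(M)\cap\Ker(d)$) and Corollary \ref{cor:ddomega} (which gives injectivity), and your bookkeeping of well-definedness, injectivity, and the image is precisely the argument left implicit in the text.
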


This provides another cohomological characterization of $\mathcal{H}^{1,1} = \mathcal{H}^{1,1}_\R \otimes \mathbb{C}$, 
alternative to that furnished by complex Hodge theory in terms of Dolbeault cohomology.

\section{Torsion-free G$_\mathbf{2}$ 7-manifolds}
\subsection{Linear algebra} 
Let $(V,\ph)$ denote a $G_2$ vector space, endowed with the induced orientation and metric $g_\ph$, and let $\psi \coloneqq \star\ph$. 
Let $(e^1,\ldots,e^7)$ be a $G_2$ coframe, namely an orthonormal basis of $V^*$ such that
\[
\ph	=	e^{123} + e^{145} + e^{167} + e^{246} - e^{257} - e^{347} - e^{356}
		= \frac16 \ep_{ijk}e^{ijk}, 
\]
where $\ep_{ijk}$ is the unique symbol that is totally skew-symmetric in its indices and satisfies $\ep_{123}=1$ etc.  

As in the complex case, there exists an interesting link between $S^2V^*$ and a certain space of $k$-forms. 

Recall the decomposition of $\Lambda^3V^*$ into $G_2$-invariant and irreducible subspaces
\[
\Lambda^3V^* = \Lambda^3_1 \oplus \Lambda^3_7 \oplus \Lambda^3_{27},
\]
where
\[
\begin{split}
\Lambda^3_1 &\coloneqq \R\,\ph,\quad
\Lambda^3_{7}\coloneqq \{\star(\alpha\w\ph)~:~\alpha\in\Lambda^1V^*\}, \\
\Lambda^3_{27}&\coloneqq \{\gamma\in\Lambda^3V^*~:~\gamma\wedge\ph=0,~\gamma\wedge\psi=0\}.
\end{split}
\]
Notice that
\[
\Lambda^3_1 \oplus \Lambda^3_{27} = \{\gamma\in\Lambda^3V^*~:~\gamma\wedge\ph=0\},
\]
since $\star(\alpha\w\ph)\w\ph = -4\star\alpha$, for every $\alpha\in\Lambda^1V^*$. 

\begin{lem}[\cite{Bryant}]
The maps $\i_\ph:S^2V^*\rightarrow\Lambda^3_1\oplus\Lambda^3_{27}$ 
\[
\begin{split}
\i_\ph(h_{ij}e^ie^j)	&= \sum_{j,k,l =1}^7\sum_{r=1}^7 h_{r[j|}\ep_{r|kl]}e^{jkl} \\
									& = 2\sum_{j<k<l} \sum_{r=1}^{7} \left(\ep_{rkl}h_{rj} + \ep_{rjk}h_{rl}+\ep_{rlj}h_{rk}\right) e^{jkl},
\end{split}
\]
and $\j_\ph:\Lambda^3_1\oplus\Lambda^3_{27}\rightarrow S^2V^*$
\[
\j_\ph(\gamma)(v,w) =  \star( v\lrcorner \ph \W w\lrcorner \ph \W \gamma), 
\]
are $G_2$-equivariant isomorphisms.
\end{lem}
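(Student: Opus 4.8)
The statement asserts that $\i_\ph$ and $\j_\ph$ are mutually inverse $G_2$-equivariant isomorphisms $S^2V^* \leftrightarrow \Lambda^3_1\oplus\Lambda^3_{27}$. The first observation is a dimension count: $\dim S^2V^* = 28 = 1 + 27 = \dim(\Lambda^3_1\oplus\Lambda^3_{27})$, so it suffices to check equivariance together with either injectivity or surjectivity of one of the two maps; the cleanest route is to verify that $\j_\ph\circ\i_\ph$ is a nonzero multiple of the identity (or directly the identity, tracking constants).

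\emph{Step 1: Equivariance.} Both maps are built out of $\ph$ (equivalently $\psi = \star\ph$), the induced metric $g_\ph$, the Hodge star, wedge product and contraction — all of which are $G_2$-natural. For $A\in G_2$ acting on $V^*$, one has $A^*\ph = \ph$, $A^*\psi = \psi$, $A$ preserves $g_\ph$ and commutes with $\star$; hence $\i_\ph(A^*h) = A^*\i_\ph(h)$ and similarly for $\j_\ph$. This is a formal check, best phrased coordinate-free using the $\j_\ph(\gamma)(v,w) = \star(v\lrcorner\ph \wedge w\lrcorner\ph \wedge \gamma)$ expression, which makes manifest that only $G_2$-invariant data enters.

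\emph{Step 2: Image of $\i_\ph$ lands in $\Lambda^3_1\oplus\Lambda^3_{27}$.} By $G_2$-equivariance and Schur's lemma applied to the decomposition $S^2V^* = \R\cdot g_\ph \oplus S^2_0 V^*$ (with $S^2_0 V^*$ the $27$-dimensional irreducible of trace-free symmetric tensors), $\i_\ph$ sends $g_\ph$ into the unique copy of the trivial representation in $\Lambda^3 V^*$, namely $\R\ph$, and $S^2_0 V^*$ into the unique copy of the $27$, namely $\Lambda^3_{27}$; in particular the image avoids $\Lambda^3_7$. Concretely one checks $\i_\ph(g_\ph) = c\,\ph$ for some constant $c$ by plugging $h_{ij} = \delta_{ij}$ into the given formula, and checks that $\i_\ph$ kills nothing by computing on one trace-free example, e.g.\ $h = e^1 e^1 - e^2 e^2$.

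\emph{Step 3: $\j_\ph\circ\i_\ph$ is a nonzero scalar.} Again by Schur, $\j_\ph\circ\i_\ph$ acts as a scalar on each of $\R\cdot g_\ph$ and $S^2_0 V^*$; equivariance alone does not force the two scalars to coincide, so both must be computed. Using the $G_2$ coframe $(e^1,\ldots,e^7)$ with the explicit $\ph = \frac16\ep_{ijk}e^{ijk}$, evaluate $(\j_\ph\circ\i_\ph)(g_\ph)$ and $(\j_\ph\circ\i_\ph)(e^1 e^1 - e^2 e^2)$ and confirm both give a common nonzero multiple of the input. This reduces to contraction and wedge identities for $\ep_{ijk}$ and the standard $G_2$ relations such as $(v\lrcorner\ph)\wedge\ph = 2\star(v^\flat)$ and the contraction identities for $\ph\wedge\ph$, $\ph\wedge\psi$. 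Once the scalar is seen to be nonzero, both maps are isomorphisms and are inverse (up to the identified constant, which the lemma as stated normalizes away).

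\emph{Main obstacle.} The genuine work is Step 3: the bookkeeping with the totally skew symbol $\ep_{ijk}$ and the Hodge star in dimension $7$ is error-prone, and one must be careful that the two Schur scalars (on the $\mathbf{1}$ and on the $\mathbf{27}$) are consistent with the claimed formulas — equivariance is not enough to conclude without these explicit evaluations. A cleaner alternative, which I would use to cross-check, is to quote Bryant's identities \cite{Bryant} directly: the map $\i_\ph$ is (a normalization of) his $\mathsf{i}_\varphi$ and the pairing $\j_\ph$ is designed so that $\j_\ph\i_\ph = \mathrm{id}$, so the lemma is really a restatement, and the only thing to verify carefully is that the two coordinate formulas given for $\i_\ph$ agree (the passage from the first line to the second is the antisymmetrization identity $h_{r[j}\ep_{|r|kl]} = \tfrac13(\ep_{rkl}h_{rj} + \ep_{rjk}h_{rl} + \ep_{rlj}h_{rk})$ up to the displayed factor).
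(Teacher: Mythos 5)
The paper offers no proof of this lemma (it is quoted from Bryant), so your proposal is judged on its own merits; the overall strategy --- equivariance, the dimension count $28=1+27$, Schur's lemma on the isotypic decompositions $S^2V^*=\R\,g_\ph\oplus S^2_0$ and $\Lambda^3_1\oplus\Lambda^3_{27}$, and explicit evaluation of the two Schur scalars --- is the right one and is essentially what any proof must do. However, your framing contains a genuine error: the lemma does \emph{not} assert that $\i_\ph$ and $\j_\ph$ are mutually inverse, and in fact they are not. The paper records, immediately after the statement, that $\i_\ph(g_\ph)=6\ph$, $\j_\ph(\ph)=6g_\ph$, and
\[
\j_\ph(\i_\ph(h)) = 8h + 4\tr_{g_\ph}(h)\,g_\ph,
\]
so the composite acts by $8+4\cdot 7=36$ on $\R\,g_\ph$ and by $8$ on $S^2_0$. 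Your Step~3 instructs the reader to ``confirm both give a \emph{common} nonzero multiple of the input''; that confirmation fails (you would find $36$ versus $8$), and there is no single ``identified constant'' that the lemma ``normalizes away''. As written, a careful execution of your plan would end in an apparent contradiction.

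The repair is small but should be made explicit: for the isomorphism claim you only need that \emph{each} of the two Schur scalars of $\j_\ph\circ\i_\ph$ is nonzero, which already gives injectivity of $\i_\ph$ and surjectivity of $\j_\ph$, whence both are isomorphisms by the dimension count; whether the scalars coincide is irrelevant. Your Step~2 (the image of $\i_\ph$ avoids $\Lambda^3_7$ because $S^2V^*$ contains no copy of the $\mathbf{7}$) is correct and is the right use of Schur; symmetrically you may note $\Lambda^3_7\subseteq\ker\j_\ph$ for the same reason, which is why the domain of $\j_\ph$ is restricted to $\Lambda^3_1\oplus\Lambda^3_{27}$. Your closing remark about checking the antisymmetrization identity relating the two displayed formulas for $\i_\ph$ is a legitimate and necessary verification; with $h_{ij}=\delta_{ij}$ the second formula gives $2\sum_{j<k<l}3\,\ep_{jkl}e^{jkl}=6\ph$, matching the paper's normalization $\i_\ph(g_\ph)=6\ph$, which is a useful sanity check on the constants.
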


For example, $\i_\ph(g_\ph)=6\ph$ and $\j_\ph(\ph) = 6 g_\ph$. Note: these isomorphisms are not one the inverse of the other. One has 
\[
\i_\ph(\j_\ph(\gamma)) = 8\gamma + 4\star(\ph\wedge \star\gamma)\,\ph, 
\]
and 
\[
\j_\ph(\i_\ph(h)) = 8h +  4\tr_{g_\ph}(h)\, g_\ph. 
\]
The space of symmetric endomorphisms decomposes into $G_2$-invariant and irreducible subspaces as
\[
\mathrm{Sym}(V) = \R\,\mathrm{Id} \oplus \mathrm{Sym}_0.  
\]
As in the K\"ahler case, let $s:\mathrm{Sym}(V)=\R\,\mathrm{Id} \oplus \mathrm{Sym}_0\to S^2V^* = \R g_\ph \oplus S^2_0$ denote 
the ($G_2$-equivariant) isomorphism $s(A) = g(A\cdot,\cdot)$. We then have the following. 

\begin{lem}\label{prop:imap}
The map $\i_\ph$ coincides with the map $2 \lambda_\ph|_{\mathrm{Sym}(V)} \circ s^{-1}$. 
\end{lem}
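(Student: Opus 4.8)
The plan is to prove the identity $\i_\ph = 2\,\lambda_\ph|_{\mathrm{Sym}(V)}\circ s^{-1}$ by exploiting $G_2$-equivariance to reduce to a two-dimensional check. Both sides are $G_2$-equivariant linear maps from $S^2V^* \cong \R g_\ph \oplus S^2_0$ to $\Lambda^3_1 \oplus \Lambda^3_{27}$: the map $\i_\ph$ is equivariant by the previous Lemma (from \cite{Bryant}), while $\lambda_\ph$ is equivariant essentially by construction (it is the linearized action map, and $s$ is equivariant). Since $S^2V^*$ decomposes into the two inequivalent irreducibles $\R g_\ph$ and $S^2_0$ (the trivial and the $\mathbf{27}$), Schur's lemma forces any $G_2$-equivariant map $S^2V^* \to \Lambda^3_1 \oplus \Lambda^3_{27}$ to be determined by two scalars — one on each summand — provided one first checks that $\lambda_\ph|_{\mathrm{Sym}(V)}$ respects the decomposition, i.e. that $\lambda_\ph(\mathrm{Id})$ is a multiple of $\ph$ and $\lambda_\ph$ sends trace-free symmetric endomorphisms into $\Lambda^3_{27}$. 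The first is immediate since $\lambda_\ph(\mathrm{Id}) = k\,\ph = 3\ph$ for a $k=3$-form by the definition of $\lambda_\ph$ (each of the three slots contributes one copy of $\ph$). For the second summand one can either invoke irreducibility of $\mathbf{27}$ directly or note that the $\Lambda^3_7$-component of $\lambda_\ph(A)$ for $A \in \mathrm{Sym}_0$ would give an equivariant map $\mathbf{27} \to \mathbf{7}$, hence zero.

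Granting the reduction, it remains to match the two scalars. For the trivial summand I would compute $\lambda_\ph(\mathrm{Id})$ directly: since $\phi$ is a $3$-form, $\lambda_\ph(\mathrm{Id}) = \phi(\mathrm{Id}\cdot,\cdot,\cdot) + \phi(\cdot,\mathrm{Id}\cdot,\cdot) + \phi(\cdot,\cdot,\mathrm{Id}\cdot) = 3\phi$, so $2\lambda_\ph(s^{-1}(g_\ph)) = 2\lambda_\ph(\mathrm{Id}) = 6\phi$, which agrees with the already-recorded value $\i_\ph(g_\ph) = 6\ph$. For the $S^2_0$ summand it suffices to evaluate both sides on a single nonzero trace-free $h$, for instance $h = e^1e^1 - e^2e^2$ (or $e^1 \odot e^2$), using the explicit $G_2$ coframe expansion $\ph = e^{123}+e^{145}+e^{167}+e^{246}-e^{257}-e^{347}-e^{356}$. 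One then compares the coefficient of some surviving monomial $e^{jkl}$ in $\i_\ph(h)$, read off from Bryant's formula $\i_\ph(h_{ij}e^ie^j) = \sum_{j<k<l}2\sum_r(\ep_{rkl}h_{rj}+\ep_{rjk}h_{rl}+\ep_{rlj}h_{rk})e^{jkl}$, with the coefficient of the same monomial in $2\lambda_\ph(A)$ where $A = s^{-1}(h)$, i.e. $A$ is the diagonal endomorphism with eigenvalues matching $h$; concretely $2\lambda_\ph(A)(X_1,X_2,X_3) = 2\sum_i \ph(X_1,\ldots,AX_i,\ldots)$.

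I expect the main obstacle to be purely bookkeeping: carefully tracking the combinatorial factor of $2$ and the conventions hidden in $h_{ij}e^ie^j$ (whether $e^ie^j$ denotes the symmetric product with or without a factor of $\tfrac12$, and whether the sum is over all $i,j$ or only $i \le j$), since a misreading there is exactly what would throw off the scalar. Once the normalization on $\mathrm{Id}$ is pinned down via $\i_\ph(g_\ph) = 6\phi = 2\lambda_\ph(\mathrm{Id})$ and the $S^2_0$-scalar is verified on one example, equivariance propagates the identity to all of $S^2V^*$ and the proof is complete. As a sanity check one can verify consistency with the two composition formulae already quoted, e.g. $\j_\ph(\i_\ph(h)) = 8h + 4\tr_{g_\ph}(h)g_\ph$ should match $2\,\j_\ph(\lambda_\ph(s^{-1}(h)))$.
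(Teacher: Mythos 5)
Your proposal is correct, but it takes a genuinely different route from the paper. The paper's proof is a single direct computation: evaluating Bryant's formula on an orthonormal triple $(e_j,e_k,e_l)$ and observing that $\sum_r \ep_{rkl}h_{rj} = \sum_r \ph(e_r,e_k,e_l)\,g(e_r,He_j) = \ph(He_j,e_k,e_l)$ (using the symmetry of $H$), so that the right-hand side of Bryant's formula is literally $2(H\cdot\ph)(e_j,e_k,e_l)=2\lambda_\ph(H)(e_j,e_k,e_l)$ — the identity holds on the nose for every $h$, with no case analysis. You instead invoke Schur's lemma on $S^2V^* = \mathbf{1}\oplus\mathbf{27}$ versus $\Lambda^3V^*=\mathbf{1}\oplus\mathbf{7}\oplus\mathbf{27}$ to reduce to matching two scalars: the trivial summand via $2\lambda_\ph(\mathrm{Id})=6\ph=\i_\ph(g_\ph)$, and the $\mathbf{27}$ summand via one explicit evaluation (e.g.\ $h=e^1\otimes e^1-e^2\otimes e^2$ tested on $e^{145}$ does yield $2$ on both sides, so the check you defer does go through). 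Your argument is sound and has the virtue of making the representation-theoretic reason for the identity transparent, and of confining the coefficient bookkeeping to a single monomial; the cost is that it still requires one explicit coefficient comparison at the end, at which point the paper's observation — that the comparison works uniformly for all $h$ and all triples at once — renders the Schur reduction unnecessary. Your sanity check against $\j_\ph(\i_\ph(h))=8h+4\tr_{g_\ph}(h)g_\ph$ is a reasonable consistency test but not needed for the proof.
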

\begin{proof}
Let $h\in S^2V^*$ and let $H\coloneqq s^{-1}(h)$. 
Given any triple of orthonormal vectors $e_j$, $e_k$, $e_l$ with $1\leq j<k<l\leq7$, we have
\[
\begin{split}
\i_\ph(h)(e_j,e_k,e_l) 	&= 2 \sum_{r=1}^{7} \left(\ep_{rkl}h_{rj} + \ep_{rjk}h_{rl}+\ep_{rlj}h_{rk}\right)  \\		
								&= 2\left(\ph(H(e_j),e_k,e_l) + \ph(H(e_l),e_j,e_k)+\ph(H(e_k),e_l,e_j)\right)\\
								&= 2 (H \cdot \ph)(e_j,e_k,e_l) = 2\lambda_{\ph}(H)(e_j,e_k,e_l).
\end{split}
\] \end{proof}

\begin{remark}
The above provides a new point of view on the splitting $\Lambda^3V^*=(\Lambda^3_1\oplus\Lambda^3_{27})\oplus\Lambda^3_7$, 
alternative to the one provided by the action of the group $G_2$. 
\end{remark}

\subsection{The decomposition of $d$}
Analogously to the complex/K\"ahler case, the spaces of differential forms on a 7-manifold $M$ with a $G_2$-structure $\ph$ 
decompose into $G_2$-irreducible subspaces. 
The standard notation is as follows:
\[
\Lambda^2(M)= \Lambda^2_7(M) \oplus \Lambda^2_{14}(M),\qquad 
\Lambda^3(M)= \Lambda^3_1(M) \oplus \Lambda^3_7(M) \oplus \Lambda^3_{27}(M), 
\]
where the lower index refers to the rank of the corresponding subbundle. Notice that the splitting is orthogonal with respect to the metric $g_\ph$ 
induced by $\ph$. We shall denote the orthogonal projections by $\pi^k_q:\Lambda^k(M)\to \Lambda^k_q(M)$. 

In turn, these decompositions define decompositions of the exterior differential operator $d$. 
Following \cite{Bryant}, we restrict our attention to torsion-free $G_2$-structures, i.e., those satisfying $d\ph=0$ and $d\psi=0$, 
and we let
\begin{align*}
\Lambda_1	&\coloneqq \Lambda^0(M),\\
\Lambda_7	&\coloneqq \Lambda^1(M),\\
\Lambda_{14}	&\coloneqq \Lambda^2_{14}(M) = \{\alpha\in\Lambda^2(M)\,:\, \star(\alpha\wedge\ph) = - \alpha\}
			=  \{\alpha\in\Lambda^2(M)\,:\, \alpha\wedge\psi=0\},\\
\Lambda_{27}	&\coloneqq \Lambda^3_{27}(M) = \{\gamma\in\Lambda^3(M)\,:\, \gamma\wedge\ph=0,~\gamma\wedge\psi=0\}. 
\end{align*}
\begin{prop}[\cite{Bryant}]\label{propBryantFormulae}
Let $M$ be a $7$-manifold with a torsion-free $G_2$-structure $\ph$. 
Then, for all $p,q\in\{1,7,14,27\}$ there exist a first-order differential operator $d^p_q:\Lambda_p\to\Lambda_q$ so that the following 
exterior derivative formulae hold for all $f\in\Lambda_1$, $\alpha\in\Lambda_7$, $\beta\in\Lambda_{14}$, $\gamma\in\Lambda_{27}$:
\[\renewcommand{\arraystretch}{1.3}
\begin{array}{lcrrrr} 
df	&	=	&	&	d^1_7f	&	&	\\
d(f\ph)	&	=	&	&	d^1_7f\w\ph	&	&	\\
d(f\psi)	&	=	&	&	d^1_7f\w\psi	&	&	\\
d\alpha	&	=	&	&	\tfrac13\star(d^7_7\alpha\w\psi)	&	+d^7_{14}\alpha	&	\\
d\star(\alpha\w\psi)	&	=	& -\tfrac37\,d^7_1\alpha\cdot\ph	& -\tfrac12\star(d^7_7\alpha\w\ph)		&	& +d^7_{27}\alpha	\\
d\star(\alpha\w\ph)	&	=	& \tfrac47\,d^7_1\alpha\cdot\psi	& +\tfrac12 d^7_7\alpha\w\ph		&	& +\star d^7_{27}\alpha	\\
d(\alpha\w\ph)	&	=	&	&\tfrac23\, d^7_7\alpha\w\psi	& - \star d^7_{14}\alpha	&	\\
d(\alpha\w\psi)	&	=	&	& \star d^7_7\alpha	&	&	\\
d\star\alpha	&	=	&	-d^7_1\alpha \star1 &	&	&	\\
d\beta		&	=	&	&	\tfrac14\star(d^{14}_7\beta\w\ph)&	& +d^{14}_{27}\beta	\\
d\star\beta	&	=	&	&\star d^{14}_7\beta	&		&	\\
d\gamma		&	=	&	&	\frac14 d^{27}_7\gamma\wedge\ph	&	& +\star d^{27}_{27}\gamma	\\
d\star\gamma	&	=	&	& -\frac13 d^{27}_7\gamma\wedge\psi&  -\star d^{27}_{14}\gamma & 
\end{array}
\]
With respect to the natural metrics on the underlying subbundles, $d^p_q = (d^q_p)^*$. 
\end{prop}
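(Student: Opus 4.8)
The plan is to verify the twelve exterior derivative formulae and then extract, from each, the identification of a first-order operator $d^p_q:\Lambda_p\to\Lambda_q$, finally checking the adjointness relations $d^p_q=(d^q_p)^*$ by Stokes' theorem. First I would fix once and for all the $G_2$-irreducible decompositions of $\Lambda^k(M)$ for $k=0,\dots,7$, together with the metric identities $\star\ph=\psi$, $\star(\alpha\w\ph)=-\alpha$ on $\Lambda^2_{14}$, the relations $\star(\alpha\w\psi)\w\ph=-4\star\alpha$ and the other contraction identities for $G_2$-structures (these are standard and recorded, e.g., in Bryant's paper). The crucial input beyond linear algebra is the torsion-free hypothesis $d\ph=0$, $d\psi=0$: this is what makes $d$ act ``diagonally'' on the subbundles in the sense that, for instance, $d(f\ph)=df\w\ph+f\,d\ph=df\w\ph$, and similarly $d(\alpha\w\ph)=d\alpha\w\ph$, $d(\alpha\w\psi)=d\alpha\w\psi$, $d\star\alpha=d(\alpha\w?)$-type rewritings reduce everything to $d$ applied to $\Lambda^0,\Lambda^1,\Lambda^2_{14},\Lambda^3_{27}$ and their Hodge duals.

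The main body of the proof is then bookkeeping, done degree by degree. For $f\in\Lambda^0$: $df\in\Lambda^1=\Lambda_7$, defining $d^1_7f\coloneqq df$; wedging with the parallel forms $\ph,\psi$ gives the next two lines for free. For $\alpha\in\Lambda^1$: $d\alpha\in\Lambda^2=\Lambda^2_7\oplus\Lambda^2_{14}$, and since $\Lambda^2_7\cong\Lambda^1$ via $\beta\mapsto\star(\beta\w\psi)$ (up to a constant), the $\Lambda^2_7$-component is $\tfrac13\star(d^7_7\alpha\w\psi)$ for a suitable operator $d^7_7:\Lambda^1\to\Lambda^1$, and the $\Lambda^2_{14}$-component is named $d^7_{14}\alpha$; the constant $\tfrac13$ is pinned down by a single normalization computation. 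Applying $d$ to $\alpha\w\psi$, $\alpha\w\ph$, $\star(\alpha\w\psi)=\star\alpha\w?$, etc., and decomposing each result into $G_2$-types, forces the appearances of $d^7_1\alpha$ (the $\Lambda^0$-part, essentially $d^*\alpha$ up to sign), $d^7_7\alpha$, $d^7_{14}\alpha$, $d^7_{27}\alpha$ with the stated rational coefficients — each coefficient being determined by evaluating on a model, e.g., $\alpha=e^1$ in a $G_2$-coframe, or by a contraction identity. The rows for $\beta\in\Lambda_{14}$ and $\gamma\in\Lambda_{27}$ are handled identically: $d\beta\in\Lambda^3=\Lambda^3_1\oplus\Lambda^3_7\oplus\Lambda^3_{27}$, but the $\Lambda^3_1$-component vanishes because $\langle d\beta,\ph\rangle\vol=d\beta\w\psi=d(\beta\w\psi)=d(0)=0$ using $\beta\w\psi=0$ and $d\psi=0$; this leaves $d^{14}_7\beta$ and $d^{14}_{27}\beta$, and dualizing gives $d\star\beta$. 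Similarly for $\gamma\in\Lambda_{27}$, the constraints $\gamma\w\ph=0=\gamma\w\psi$ kill two components of $d\gamma$ and $d\star\gamma$, leaving $d^{27}_7$, $d^{27}_{27}$, $d^{27}_{14}$.

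Finally, the adjointness $d^p_q=(d^q_p)^*$: for each pair I would integrate $\langle d^p_q x, y\rangle$ over $M$, write it using $\star$ and the formulae above as an integral of a wedge product, move $d$ across via Stokes (no boundary term, $M$ compact), and re-recognize the result as $\langle x, d^q_p y\rangle$; the rational coefficients in Proposition \ref{propBryantFormulae} are in fact chosen precisely so that these pairings match without extra constants. The operators $d^1_7$ and $d^7_1$ are visibly $d$ and (up to sign) $d^*$ on functions, which are adjoint; the pair $d^7_{14},d^{14}_7$ follows from $\int_M d\alpha\w\star\beta = \int_M \alpha\w d(\star\beta)$ combined with the $\Lambda^2_{14}$ characterization; and so on. The main obstacle I anticipate is purely computational stamina: correctly tracking the many numerical coefficients ($\tfrac13,\tfrac37,\tfrac47,\tfrac12,\tfrac23,\tfrac14,\tfrac13$) and signs across the Hodge star and wedge identities in dimension $7$, where $\star$ is not an involution on all degrees and the $\Lambda^3_7$-isomorphism can be written via either $\ph$ or $\psi$ with different normalizations. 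Since this is Bryant's result, I would in practice cite \cite{Bryant} for the detailed derivation and only indicate the structure above, emphasizing that the torsion-free condition is exactly what collapses $d$ onto the four ``primitive'' pieces $\Lambda_1,\Lambda_7,\Lambda_{14},\Lambda_{27}$.
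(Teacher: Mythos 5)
Your proposal matches the paper's treatment: the paper likewise cites Bryant for the full derivation and only sketches the method, defining each $d^p_q$ as the projection of $d$ (applied to the left-hand sides) onto $G_2$-types, killing components via the defining relations such as $\beta\w\psi=0$ and $\gamma\w\ph=0$, pinning down the rational coefficients by wedging with $\ph$ or $\psi$ and the identities $\star(\alpha\w\ph)\w\ph=-4\star\alpha$, $\star(\alpha\w\psi)\w\psi=3\star\alpha$, and reading off adjointness from the definitions plus integration by parts. This is essentially the same approach, so no further comparison is needed.
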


\begin{remark}
This result is stated in \cite{Bryant} and it is discussed also in \cite{CKT}. The latter provides a detailed proof, but uses different  
conventions.   

Let us collect here a few comments.  
\begin{enumerate}[$\bullet$]
\item The operators $d^p_q$ are defined by decomposing the differential form in the LHS of each formula into types. For instance
\[
\begin{split}
&d^1_7f \coloneqq d f,\\
&d^7_1\alpha \coloneqq d^* \alpha,\quad d^7_{7}\alpha \coloneqq \star d(\alpha\w\psi), \quad d^7_{14} \alpha \coloneqq \pi^2_{14}(d\alpha),\quad 
d^7_{27}\alpha \coloneqq \pi^3_{27}(d\star(\alpha\w\psi)),\\
&d^{14}_7\beta	\coloneqq d^*\beta,\quad d^{14}_{27}\beta \coloneqq \pi^3_{27}(d\beta),\\
&d^{27}_7\gamma \coloneqq \star(d^*\gamma\wedge\psi),\quad d^{27}_{14}\gamma\coloneqq \pi^{2}_{14}(d^*\gamma),\quad 
d^{27}_{27}\gamma \coloneqq \pi^3_{27}(\star d\gamma).
\end{split}
\]
These definitions imply that $d^p_q = (d^q_p)^*$, where the RHS is the formal adjoint.
\item Some components in the RHS are zero by the defining property of the subspace $\Lambda_p$, e.g.~$\pi^3_1(d\beta) = 0$ since $\beta\wedge\psi=0$, 
and $\pi^4_1(d\gamma) = 0$ since $\gamma\wedge\ph = 0$. 
\item The fact that the same operator (e.g.~$d^7_{7}\alpha$, $d^{27}_7\gamma$) appears in different formulae follows from 
$d^p_q = (d^q_p)^*$ and from the $G_2$ identities  
\begin{equation}\label{G2id1form}
\star(\alpha\w\ph)\w\ph = -4 \star\alpha,\qquad
\star(\alpha\w\psi)\w\psi = 3 \star\alpha,\qquad \forall\alpha\in\Lambda^1(M). 
\end{equation}
For instance, let us consider the 3-form $d\star(\alpha\w\psi)$. It decomposes according to the decomposition of $\Lambda^3(M)$ as follows
\[
d\star(\alpha\w\psi) = u\cdot \ph + \star(\theta\wedge \ph) + \rho,
\]
where $u\in\Lambda_1$, $\theta\in\Lambda_7$ and $\rho\in\Lambda_{27}$. 
By definition $\rho = \pi^3_{27}(d\star(\alpha\w\psi)) \eqqcolon d^7_{27}\alpha$. 
Wedging both sides by $\psi$ and using the second identity in \eqref{G2id1form} we get
\[
7\,u\,\star1 = d\star(\alpha\w\psi) \w \psi = 3 d\star \alpha \quad \Rightarrow \quad u = -\tfrac 37 d^*\alpha = -\tfrac 37 d^7_1\alpha. 
\]
On the other hand, wedging by $\ph$ and using the first identity in \eqref{G2id1form} we obtain
\[
d\star(\alpha\w\psi) \w \ph = \star(\theta\wedge \ph)\w\ph = -4 \star\theta.
\]
Since $\star(\alpha\w\psi) \in \Lambda^2_7(M)$, one has $\star(\alpha\w\psi) \w \ph = 2\, \alpha\w\psi$, thus
\[
\theta = -\tfrac14 \star d(\star(\alpha\w\psi) \w \ph) = -\tfrac12 \star d( \alpha\w\psi) = -\tfrac12 d^7_7\alpha. 
\]
The proof of the remaining identities is similar. 
\item These decompositions are analogous to those seen above for complex manifolds. 
The algebraic operations of wedging with $\ph$ or $\psi$ are analogous to the Lefschetz operator on K\"ahler manifolds. 
\end{enumerate}
\end{remark}

The identity $d^2=0$ is equivalent to certain second order identities on the operators $d^p_q$. Here, we will need the following
\begin{equation}\label{e:d2id}
d^7_{14}d^7_7 + 2\, d^{27}_{14}d^7_{27} = 0, 
\end{equation}
which is a consequence of $d^2\star(\alpha\w\ph)=0$. 

We will also use the fact that, by definition, the operator $d^7_7:\Lambda_7\to\Lambda_7$ satisfies  
\[
d^7_7(df) = \star d (df\w\psi)=0.
\]
We refer the reader to \cite[Table 2]{Bryant} for the complete list of second order identities. 

\smallskip

We conclude this section with some observations on the operator $d^7_{27}$ and its formal adjoint $d^{27}_7$ that will be useful in the sequel.

Let $\gamma\in\Lambda^3_{27}(M)$. From the expression of $d\gamma$ and the first identity in \eqref{G2id1form}, one has
\[
d^{27}_7\gamma = - \star( \star d\gamma \wedge\ph).  
\]
Moreover, by definition 
\[
d^{27}_7\gamma = - \star( \star d\star\gamma\wedge\psi) = \star(d^*\gamma\wedge\psi).
\]
Therefore, for every $\gamma\in\Lambda^3_{27}(M)$ the following identity holds
\begin{equation}\label{DifId327}
d^*\gamma\wedge\psi = \ph \wedge  \star d\gamma.
\end{equation}

\smallskip 

The next result provides some information about the operator $d^7_{27}$.
\begin{lem}\label{l:d727}
Let $(M,\ph)$ be a $7$-manifold with a torsion-free $G_2$-structure. Consider the operator
$d^7_{27}:\Lambda_7\rightarrow\Lambda_{27}$.
\begin{enumerate}[1.]
\item The condition $d d^7_{27}(\alpha)=0$ is equivalent to the two equations
\[
d^{27}_7d^7_{27}(\alpha)=0, \quad d^{27}_{27}d^7_{27}(\alpha)=0.
\]
\item Assume $M$ is compact. Then the first of these equations is equivalent to $d^7_{27}(\alpha)=0$ so it implies the second. In other words:
\[
d d^7_{27}(\alpha)=0\quad\Leftrightarrow\quad d^7_{27}(\alpha)=0.
\]
In particular, the spaces $\Im(d^7_{27})$ and $\Ker(d|_{\Lambda_{27}})$ are transverse. 
Furthermore, they are orthogonal. 

\item The first equation in 1.~can be written as
\[
\Delta\alpha+\frac57 dd^*\alpha=0.
\]
If $\alpha$ is closed then this equation is equivalent to $dd^*\alpha=0$.
\item Assume $M$ is compact. Then $d\alpha=0$ and $d^7_{27}(\alpha)=0$ if and only if $\alpha$ is harmonic. 
\end{enumerate}
\end{lem}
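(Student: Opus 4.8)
The plan is to establish Lemma \ref{l:d727} one item at a time, using the differential formulae of Proposition \ref{propBryantFormulae}, the second-order identity \eqref{e:d2id}, and standard compact Hodge-theoretic facts.

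\textbf{Item 1.} Apply the exterior derivative formula for $d\gamma$ with $\gamma = d^7_{27}(\alpha)\in\Lambda_{27}$: this reads $d(d^7_{27}\alpha) = \tfrac14 d^{27}_7(d^7_{27}\alpha)\wedge\ph + \star\, d^{27}_{27}(d^7_{27}\alpha)$. The two terms on the right lie in the $G_2$-orthogonal pieces $\Lambda^4_7(M)$ and $\Lambda^4_{27}(M)$ respectively, so $d(d^7_{27}\alpha)=0$ holds if and only if both components vanish, i.e. $d^{27}_7 d^7_{27}(\alpha)=0$ and $d^{27}_{27}d^7_{27}(\alpha)=0$. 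This is purely the orthogonality of the $G_2$-decomposition.

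\textbf{Item 2.} On a compact manifold, $d^{27}_7 d^7_{27}(\alpha)=0$ pairs with $\alpha$ (or rather one integrates $\langle d^{27}_7 d^7_{27}\alpha,\alpha\rangle$) to give, using $d^{27}_7 = (d^7_{27})^*$, that $\int_M |d^7_{27}\alpha|^2 = 0$, hence $d^7_{27}(\alpha)=0$; this trivially forces the second equation as well, giving the stated equivalence $d\,d^7_{27}(\alpha)=0 \Leftrightarrow d^7_{27}(\alpha)=0$. For transversality of $\Im(d^7_{27})$ and $\Ker(d|_{\Lambda_{27}})$: if $\gamma = d^7_{27}\alpha$ is also $d$-closed, then by Item 1 it satisfies $d^{27}_7\gamma=0$, i.e. $(d^7_{27})^*d^7_{27}\alpha=0$, so again $\gamma=0$; orthogonality follows from $\langle d^7_{27}\alpha, \gamma'\rangle = \langle \alpha, d^{27}_7\gamma'\rangle$ and the fact that $d^{27}_7\gamma' = \star(d^*\gamma'\wedge\psi)$ vanishes when $d\gamma'=0$ — more directly, $\langle d^7_{27}\alpha,\gamma'\rangle = \langle\alpha, (d^7_{27})^*\gamma'\rangle$ and $(d^7_{27})^*\gamma' = \pi_7(d^*\gamma')$-type expression vanishes on $\Ker(d)$ after using \eqref{DifId327}; I will phrase this carefully so the pairing lands on a term annihilated by $d\gamma'=0$.

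\textbf{Items 3 and 4.} The point is to turn $d^{27}_7 d^7_{27}(\alpha)=0$ into a Laplace-type equation. I would expand $\Delta\alpha = dd^*\alpha + d^*d\alpha$ and, using the decomposition $d\alpha = \tfrac13\star(d^7_7\alpha\wedge\psi) + d^7_{14}\alpha$ together with $d^7_7(df)=0$ and the adjoint relations $d^p_q=(d^q_p)^*$, compute $d^*d\alpha$ and $dd^*\alpha$ in terms of the operators $d^7_1, d^7_7, d^7_{14}, d^7_{27}$; the identity \eqref{e:d2id} is what lets the cross-terms combine. The expected outcome is $d^{27}_7 d^7_{27}\alpha = c\,(\Delta\alpha + \tfrac57 dd^*\alpha)$ up to the normalizing constant, which gives Item 3; when $d\alpha=0$ the term $d^*d\alpha$ drops and $d^7_{14}\alpha=0$, $d^7_7\alpha=0$ hold, so the equation collapses to $dd^*\alpha=0$. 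For Item 4: if $d\alpha=0$ and $d^7_{27}\alpha=0$, then by Item 3 $dd^*\alpha=0$, hence $\|d^*\alpha\|^2 = \langle dd^*\alpha,\alpha\rangle = 0$, so $d^*\alpha=0$ and $\alpha$ is harmonic; conversely harmonic implies $d\alpha=0$ and $d^*\alpha=0$, and $d^*\alpha=0$ forces $d^7_1\alpha=0$ while from the closed, coclosed conditions one reads off $d^7_{27}\alpha = \pi^3_{27}(d\star(\alpha\wedge\psi))$, which I must check vanishes — this follows because $d\alpha=0, d^*\alpha=0$ together with Bryant's formula for $d\star(\alpha\wedge\psi)$ (whose $\Lambda_1$ and $\Lambda_7$ components are multiples of $d^7_1\alpha$ and $d^7_7\alpha$, both zero) leave only the $\Lambda_{27}$ component $d^7_{27}\alpha$, and one argues its vanishing from $d(d\star(\alpha\wedge\psi))=0$ combined with Item 2, or more directly by noting $\star(\alpha\wedge\psi)$ is then harmonic so its components are harmonic.

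\textbf{Main obstacle.} The delicate part is the algebraic bookkeeping in Item 3 — getting the precise coefficient $\tfrac57$ and confirming the sign requires carefully juggling several of Bryant's formulae and the second-order identity \eqref{e:d2id}, and it is easy to mismatch a normalization (the isomorphisms $\i_\ph,\j_\ph$ are not mutual inverses, and factors like $3$, $4$, $\tfrac13$ proliferate in the $d$-formulae). I would pin this down by a direct computation in a $G_2$-coframe, or by testing against $\alpha = df$ and a generic closed form to fix the constant, rather than by abstract manipulation. Items 1, 2, 4 are comparatively routine once Item 3 is in hand.
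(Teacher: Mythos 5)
Your Items 1, 2 and 4 follow essentially the same route as the paper: Item 1 is the orthogonality of the $G_2$-decomposition applied to Bryant's formula for $d\gamma$ with $\gamma=d^7_{27}\alpha$; Item 2 is the integration by parts $\int_M(d^{27}_7d^7_{27}\alpha,\alpha)=\int_M|d^7_{27}\alpha|^2$; and Item 4 is the chain through Items 2 and 3 plus $\int_M(dd^*\alpha,\alpha)=\|d^*\alpha\|^2$. Your orthogonality argument via $\langle d^7_{27}\alpha,\gamma'\rangle=\langle\alpha,d^{27}_7\gamma'\rangle$ together with $d^{27}_7\gamma'=-\star(\star d\gamma'\wedge\ph)=0$ for closed $\gamma'$ is slightly more direct than the paper's (which integrates $d\star(\alpha\wedge\ph)$ against $\star\gamma$), and it is correct.

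The genuine gap is Item 3, which you yourself flag as the crux but do not actually prove. Two problems. First, the identity \eqref{e:d2id} you plan to lean on reads $d^7_{14}d^7_7+2\,d^{27}_{14}d^7_{27}=0$: it controls the $\Lambda^2_{14}$-component $d^{27}_{14}d^7_{27}$ (this is the identity used for statement 3 of Lemma \ref{l:ddphi_features}), whereas Item 3 requires a relation for the $\Lambda_7$-component $d^{27}_7d^7_{27}$. The inputs actually needed are the second-order identities from Bryant's Tables 2 and 3, namely $\Delta\alpha=\left(\left(d^7_7\right)^2+d^1_7d^7_1\right)\alpha$ on $1$-forms and $(d^7_7)^2=d^{27}_7d^7_{27}-\tfrac{12}{7}d^1_7d^7_1$, which combine immediately to give $d^{27}_7d^7_{27}=\Delta+\tfrac57\,dd^*$; the paper simply cites these, while your expansion of $\Delta=dd^*+d^*d$ through the type decomposition of $d\alpha$ produces terms like $d^{14}_7d^7_{14}\alpha$ and $(d^7_7)^2\alpha$ that still require further (unproved) second-order identities to be converted into $d^{27}_7d^7_{27}\alpha$. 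Second, your fallback of ``testing against $\alpha=df$ and a generic closed form to fix the constant'' can at best determine coefficients once the general shape of the identity is already established; it does not prove that an identity of that shape holds for all $\alpha$. As written, Item 3 — and hence the quantitative content of Items 3 and 4 — rests on an unverified formula. The fix is either to import the two Table 2/3 identities explicitly, or to carry out the coframe computation you defer.
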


\begin{proof}
The first statement follows from Bryant's formula 
\[
dd^7_{27}(\alpha)	= \frac14 d^{27}_{7}d^7_{27}(\alpha) \wedge \ph + \star d^{27}_{27}d^7_{27}(\alpha).
\]
The second statement follows from the fact that $d^{27}_7=(d^7_{27})^*$, so that
\[
\int_M (d^{27}_7d^7_{27}(\alpha),\alpha)\vol_{g_\ph}=\int_M |d^7_{27}(\alpha)|^2 \vol_{g_\ph}.
\]
To prove orthogonality, choose any $\gamma\in \Ker (d)\cap\Lambda^3_{27}(M)$. Then
\begin{align*}
\int_M (d^7_{27}(\alpha),\gamma) \vol_{g_\ph}&=\int_M(\star d^7_{27}(\alpha),\star\gamma)\vol_{g_\ph}\\
&=\int_M (d\star(\alpha\wedge\ph),\star\gamma)\vol_{g_\ph}\\
&=\int_M(d^*(\alpha\wedge\ph),\gamma)\vol_{g_\ph}\\
&=\int_M(\alpha\wedge\ph,d\gamma)\vol_{g_\ph}=0.
\end{align*}

The third statement follows from Bryant's formulae \cite[Table 2 and Table 3]{Bryant}
\[
\Delta\alpha	=	 \left(\left(d^7_7\right)^2+d^1_7 d^7_1\right)\alpha,\qquad 
(d^7_7)^2		=	d^{27}_7d^7_{27} -\frac{12}{7}d^1_7d^7_1. 
\]
Regarding statement 4., we have seen in 2.~that $dd^7_{27}(\alpha)=0$ if and only if $d^7_{27}(\alpha)=0$. 
Using 3., we obtain that $dd^7_{27}(\alpha)=0$ if and only if $dd^*\alpha=0$. Notice that
\[
\int_M (dd^*\alpha, \alpha)\vol_{g_\ph} =\int_M |d^*\alpha|^2 \vol_{g_\ph}.
\]
Statement 4.~then follows from the fact that $\alpha$ is harmonic if and only if it is closed and coclosed.
\end{proof}
In particular, Lemma \ref{l:d727} says that, for $M$ compact and $\alpha$ closed,
\[
dd^7_{27}(\alpha)=0\quad\Leftrightarrow\quad d^7_{27}(\alpha)=0\quad\Leftrightarrow\quad d^7_1(\alpha)=d^*\alpha=0. 
\]
We can formalize this as follows.
\begin{cor}
Let $(M,\ph)$ be a compact $7$-manifold with a torsion-free $G_2$-structure. Choose $\alpha\in\Lambda^1(M)$. Then
$\alpha$ is harmonic if and only if all its $G_2$ exterior derivatives vanish, i.e.,
\[
d^7_1(\alpha)=0, \ \ d^7_{7}(\alpha)=0, \ \ d^7_{14}(\alpha)=0, \ \ d^7_{27}(\alpha)=0.
\]
\end{cor}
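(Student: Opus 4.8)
The plan is to read this off directly from Lemma \ref{l:d727}, part 4, together with the first line of Bryant's formulae in Proposition \ref{propBryantFormulae}. Lemma \ref{l:d727}.4 already states that, for $M$ compact, $\alpha$ is harmonic if and only if $d\alpha = 0$ and $d^7_{27}(\alpha) = 0$. So the only work remaining is to translate the condition $d\alpha = 0$ into statements about the remaining $G_2$ exterior derivatives $d^7_1$, $d^7_7$, $d^7_{14}$, and to account for $d^7_1$ via coclosedness.

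First I would note that harmonicity of $\alpha$ is equivalent to $d\alpha = 0$ together with $d^*\alpha = 0$, and $d^*\alpha = d^7_1(\alpha)$ by definition; so the $d^7_1$-component is governed by the coclosed half of the harmonic condition and is independent of $d\alpha$. Next I would analyze the formula $d\alpha = \tfrac13\star(d^7_7\alpha\w\psi) + d^7_{14}\alpha$. The two summands lie in the orthogonal, $G_2$-irreducible subbundles $\Lambda^2_7(M)$ and $\Lambda^2_{14}(M)$, so $d\alpha = 0$ if and only if $\star(d^7_7\alpha\w\psi) = 0$ and $d^7_{14}(\alpha) = 0$ separately. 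Applying the second $G_2$ identity in \eqref{G2id1form}, namely $\star(\beta\w\psi)\w\psi = 3\star\beta$, to $\beta = d^7_7\alpha \in \Lambda^1(M)$ shows that $\star(d^7_7\alpha\w\psi) = 0$ is equivalent to $d^7_7(\alpha) = 0$. Hence $d\alpha = 0 \iff d^7_7(\alpha) = 0$ and $d^7_{14}(\alpha) = 0$.

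Combining these observations gives both implications. If all four operators annihilate $\alpha$, then $d^7_7(\alpha) = d^7_{14}(\alpha) = 0$ forces $d\alpha = 0$, and together with $d^7_{27}(\alpha) = 0$ Lemma \ref{l:d727}.4 yields that $\alpha$ is harmonic. Conversely, if $\alpha$ is harmonic, then $d^7_1(\alpha) = d^*\alpha = 0$, while Lemma \ref{l:d727}.4 gives $d\alpha = 0$ and $d^7_{27}(\alpha) = 0$; the vanishing of $d\alpha$ then forces $d^7_7(\alpha) = d^7_{14}(\alpha) = 0$ by the previous step. I do not expect any genuine obstacle: the corollary is essentially a bookkeeping consequence of Lemma \ref{l:d727} and the $G_2$-decomposition of $d$. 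The only mild point requiring care is the equivalence between the vanishing of $\star(d^7_7\alpha\w\psi)$ and of $d^7_7(\alpha)$ itself, which is immediate from the $G_2$ identity \eqref{G2id1form} recalled above.
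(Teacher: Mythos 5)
Your proposal is correct and follows essentially the same route as the paper: the paper derives the corollary directly from Lemma \ref{l:d727} (in particular part 4, which carries the compactness/integration-by-parts content) together with the observation that $d\alpha=0$ is equivalent to the vanishing of its $\Lambda^2_7$ and $\Lambda^2_{14}$ components, i.e.\ of $d^7_7\alpha$ and $d^7_{14}\alpha$. Your extra care with the injectivity of $\beta\mapsto\star(\beta\wedge\psi)$ via \eqref{G2id1form} is exactly the bookkeeping the paper leaves implicit.
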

We remark that this result does not appear amongst Bryant's formulae because it requires integration by parts, thus compactness.

\smallskip 

If $M$ is compact and $\alpha=df$, we can reformulate the above as follows:
\begin{equation}\label{e:remd727}
dd^7_{27}(df)=0\quad\Leftrightarrow\quad d^7_{27}(df)=0\quad\Leftrightarrow\quad f\equiv c,
\end{equation}
where we use the fact that the only harmonic functions are the constants.

\subsection{Harmonic forms.}
Let $(M,\ph)$ be 7-manifold with a torsion-free $G_2$-structure. 
As in the K\"ahler case, but for a different reason, the Hodge Laplacian operator $\Delta$ preserves the decomposition of $\Lambda^k(M)$ 
into types. This follows either from the general theory of torsion-free Riemannian $G$-structures explained by Joyce \cite[Sect.~3.5.2]{Joyce}, 
or by Bryant-Harvey's formulae \cite[Table 3]{Bryant}. Let $\mathcal{H}^k_q$ denote the harmonic forms in $\Lambda^k_q$. 
Then, when $M$ is a compact, one obtains decompositions such as
\[
\mathcal{H}^2=\mathcal{H}^2_7\oplus\mathcal{H}^2_{14}, \quad \mathcal{H}^3=\mathcal{H}^3_1\oplus\mathcal{H}^3_7\oplus\mathcal{H}^3_{27},
\]
etc. 
However, analogously to the refined $\U(n)$-decomposition in the K\"ahler case, there is no cohomological viewpoint on these spaces.

\subsection{The global $dd^\ph$ lemma}

In order to characterize $\Im(dd^\ph)$, it is important to notice that forms of type $dd^\ph f$ have some special features. 
To start, we observe that the general identity 
\begin{equation}\label{eqalphahookphi}
\alpha^\sharp\lrcorner\ph = \star(\alpha\w\psi), 
\end{equation}
which holds for every $\alpha\in\Lambda^1(M)$, 
allows one to rewrite
\[
dd^\ph f = d(\nabla f\lrcorner\ph) = d\star(df\w\psi). 
\] 

\begin{lem}\label{l:ddphi_features}
Let $(M,\ph)$ be a $7$-manifold with a torsion-free $G_2$-structure.  
Given $f\in\Lambda^0(M)$, consider the $3$-form $dd^\ph f$. It has the following properties:
\begin{enumerate}
\item $dd^\ph f$ is exact;
\item $dd^\ph f\in\Lambda^3_1(M)\oplus\Lambda^3_{27}(M)$ and 
\[
dd^\ph f=-\frac37\Delta f\cdot\ph+ d^7_{27}(df); 
\]
\item $\pi^2_{14}d^*(dd^\ph f)=0$;
\item Assume $M$ is compact. Then $\pi^3_1(dd^\ph f) =0$ if and only if $\pi^3_{27}(dd^\ph f) =0$. In other words, $dd^\ph f$ is not of pure type.
\end{enumerate}
\end{lem}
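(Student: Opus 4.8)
\textbf{Proof plan for Lemma \ref{l:ddphi_features}, part 4.}

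The plan is to read off the two relevant components of $dd^\ph f$ from part 2 and then run an integration-by-parts argument exactly parallel to the one used in Lemma \ref{l:d727}. From part 2 we have the explicit decomposition
\[
dd^\ph f = -\tfrac37\,\Delta f\cdot\ph + d^7_{27}(df),
\]
so that $\pi^3_1(dd^\ph f) = -\tfrac37\,\Delta f\cdot\ph$ and $\pi^3_{27}(dd^\ph f) = d^7_{27}(df)$. Thus the claim ``$\pi^3_1 = 0 \Leftrightarrow \pi^3_{27} = 0$'' is precisely the assertion that $\Delta f = 0 \Leftrightarrow d^7_{27}(df) = 0$. One implication is immediate: if $f$ is harmonic then $f$ is constant (since $M$ is compact), hence $df = 0$ and $d^7_{27}(df)=0$.

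For the reverse implication I would invoke \eqref{e:remd727}, or re-derive it directly: since $d(df) = 0$ automatically, Lemma \ref{l:d727}(2) applied to $\alpha = df$ gives $d^7_{27}(df) = 0 \Leftrightarrow dd^7_{27}(df) = 0$, and Lemma \ref{l:d727}(3)--(4) (again with $\alpha = df$, which is closed) shows this is equivalent to $dd^* df = 0$, i.e. to $d\Delta f = 0$; pairing with $f$ and integrating by parts forces $\Delta f = 0$. Alternatively, one can shortcut: $d^7_{27}(df) = 0$ together with the closedness of $df$ says, by the Corollary following Lemma \ref{l:d727}, that $df$ is harmonic, hence (compactness) $df = 0$, hence $\Delta f = 0$. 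Either way the two vanishing conditions coincide, and since $\ph$ and any nonzero element of $\Lambda^3_{27}(M)$ are genuinely of different pure type, a nonzero $dd^\ph f$ cannot be of pure type. The final sentence of the statement then follows formally. I do not expect a genuine obstacle here: the work has all been done in Lemma \ref{l:d727} and part 2, and this step is essentially bookkeeping, the only mild subtlety being to make sure the equivalence is stated for general $f$ (not just $f$ with $df$ closed, which is automatic) and to note explicitly that compactness is what converts ``$df$ harmonic'' into ``$df = 0$''.
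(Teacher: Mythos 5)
For part 4, your plan is exactly the paper's proof: the paper reads the two components off the formula in part 2 and then simply cites condition \eqref{e:remd727}, which is precisely the equivalence $d^7_{27}(df)=0\Leftrightarrow f\equiv c\Leftrightarrow \Delta f=0$ that you establish via Lemma \ref{l:d727}. Your primary route (Lemma \ref{l:d727}(2)--(3) applied to $\alpha=df$) is sound, modulo the cosmetic point that one pairs $dd^*df$ with $df$, not with $f$ (or simply notes that $d(\Delta f)=0$ forces $\Delta f$ to be a constant with zero mean). Your ``shortcut'', however, is circular as cited: the Corollary following Lemma \ref{l:d727} characterizes harmonicity by the vanishing of \emph{all four} $G_2$ derivatives, including $d^7_1(df)=d^*df=\Delta f$, which is the quantity you are trying to show vanishes. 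The correct reference for the shortcut is statement 4 of Lemma \ref{l:d727} itself ($d\alpha=0$ and $d^7_{27}\alpha=0$ iff $\alpha$ is harmonic), after which ``$df$ exact and harmonic'' gives $df=0$ by orthogonality of exact and harmonic forms.

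The genuine gap is one of coverage: the lemma has four parts and you prove only the fourth, taking part 2 as given. Parts 2 and 3 carry the real content. Part 2 requires rewriting $dd^\ph f=d\star(df\wedge\psi)$ via \eqref{eqalphahookphi} and then applying Bryant's formula for $d\star(\alpha\wedge\psi)$ together with the identity $d^7_7(df)=\star d(df\wedge\psi)=0$. Part 3 is not bookkeeping at all: it needs both the observation that $d^*(u\cdot\ph)$ has no $\Lambda^2_{14}$ component and the second-order identity \eqref{e:d2id}, $d^7_{14}d^7_7+2\,d^{27}_{14}d^7_{27}=0$, to conclude $\pi^2_{14}d^*(d^7_{27}(df))=-\tfrac12 d^7_{14}d^7_7(df)=0$. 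Without these, the complex \eqref{eq:G2complex} underlying the $dd^\ph$-cohomology is not established.
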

\begin{proof} Statement 1.~is obvious. Regarding Statement 2., Bryant's formulae (Proposition \ref{propBryantFormulae}) and the identity 
$d^7_7(df)=0$ give 
\[
dd^\ph f = d\star(df\w\psi) = -\frac37\Delta f\cdot\ph+d^7_{27}(df), 
\]
showing, in particular, that $dd^\ph f\in\Lambda^3_1(M)\oplus\Lambda^3_{27}(M)$. 
Note that this last fact also follows from general theory, since $dd^\ph f = \lambda_\ph(\nabla\nabla f)$. 

Let us now prove Statement 3. Again using Bryant's formulae we find that, for any $u\in \Lambda^0(M)$, $d^*(u\cdot\ph)$ has no component in 
$\Lambda^2_{14}(M)$. 
Furthermore, from \eqref{e:d2id} we obtain
\[
\pi^2_{14}d^*(d^7_{27}(df))=d^{27}_{14}d^7_{27}(df) = -\frac12 d^7_{14}d^7_7(df) = 0.
\]
Finally, Statement 4.~follows from condition \eqref{e:remd727}.
\end{proof}

\begin{remark}\label{RemExtraHL}
Statement 4. does not appear in Harvey-Lawson's papers because they do not discuss decomposition into types.
\end{remark}

We can formalize this situation via the following spaces.

\begin{definition}
Let $(M,\ph)$ be $7$-manifold with a torsion-free $G_2$-structure. Set
\[
C\coloneqq \left(\Lambda^3_1(M)\oplus\Lambda^3_{27}(M)\right)\cap \Ker(\pi^2_{14}d^*), \qquad K \coloneqq C\cap \Ker(d).
\]
\end{definition}

According to Lemma \ref{l:ddphi_features}, $\Im(dd^\ph)\subseteq K$. 
We now want to characterize its complement. This requires understanding the analytic properties of $dd^\ph$.

\begin{lem}\label{l:G2injectivity} 
The principal symbol $\sigma(dd^\ph)$ 
of the linear differential operator $dd^\ph:\Lambda^0(M)\rightarrow\Lambda^3_1(M)\oplus\Lambda^3_{27}(M)$ gives the homomorphisms
\[ 
\sigma(dd^\ph)|_{x}(\xi):\R\to\Lambda^3_1(T^*_xM)\oplus\Lambda^3_{27}(T^*_xM) ,\quad 
\sigma(dd^\ph)|_{x}(\xi) : c \mapsto c\,\xi \wedge (\xi^\sharp\lrcorner \ph),
\]
for all $x\in M$ and $\xi\in T^*_xM$. 
This symbol is injective.
\end{lem}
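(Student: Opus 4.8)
The plan is to compute $\sigma(dd^\ph)$ directly from its definition and then exploit the transitivity of the $G_2$ action on the unit sphere, exactly as in the proof of Lemma \ref{l:Kinjectivity}. Since $dd^\ph f = d\star(df\w\psi) = d(\nabla f \lrcorner \ph)$, the operator is a composition of the zeroth-order map $f\mapsto \nabla f\lrcorner\ph$ (whose ``symbol'' in $\xi$ replaces $\nabla f$ by $\xi^\sharp$ times the scalar) with the exterior derivative $d$ (whose symbol is $\xi\w\cdot$). Hence $\sigma(dd^\ph)|_x(\xi)(c) = c\,\xi\w(\xi^\sharp\lrcorner\ph)$, which is the claimed formula; strictly, one should note that $dd^\ph$ is genuinely second-order and that $d^\phi$ being first-order with the right leading term is what makes $\xi^\sharp\lrcorner\ph$ appear, but this is routine symbol bookkeeping. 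I would state this computation in one or two lines.

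Next, to prove injectivity, fix $x\in M$ and $\xi\in T^*_xM\smallsetminus\{0\}$; by homogeneity we may rescale and, since $G_2$ acts transitively on the unit sphere of $V^*$, choose a $G_2$-coframe $(e^1,\dots,e^7)$ with $\xi$ a nonzero multiple of $e^1$. With $\ph = e^{123}+e^{145}+e^{167}+e^{246}-e^{257}-e^{347}-e^{356}$, one computes $e^1{}^\sharp\lrcorner\ph = e^{23}+e^{45}+e^{67}$ and therefore $\xi\w(\xi^\sharp\lrcorner\ph)$ is a nonzero multiple of $e^{123}+e^{145}+e^{167}\neq 0$. Thus the map $c\mapsto c\,\xi\w(\xi^\sharp\lrcorner\ph)$ has trivial kernel, which is injectivity. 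I would also remark (as a sanity check, and because it will presumably be used later in the Hodge decomposition) that this $3$-form indeed lies in $\Lambda^3_1(T^*_xM)\oplus\Lambda^3_{27}(T^*_xM)$, consistent with the stated target of the operator; this follows either from $dd^\ph f = \lambda_\ph(\nabla\nabla f)$ and Lemma \ref{prop:imap}, or by a direct check that $\bigl(e^{123}+e^{145}+e^{167}\bigr)\w\psi = 0$ using $\psi=\star\ph$.

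I do not anticipate a serious obstacle here: the argument is a short symbol computation followed by the standard ``reduce to a normal form via the symmetry group'' trick, entirely parallel to Lemma \ref{l:Kinjectivity} in the K\"ahler case. The only mildly delicate point is making sure the symbol of the composite $d\circ d^\phi$ is correctly identified — one must check that the lower-order terms in $d^\phi$ (coming from derivatives of $\ph$, which vanish here since $\ph$ is parallel, but in general from $\nabla$ acting on the coframe) do not contribute to the top-order symbol — but with $\ph$ torsion-free, equivalently $\nabla\ph=0$, this is immediate. So the ``hard part'' is essentially just organizing the linear-algebra identity $e^1\w(e^1{}^\sharp\lrcorner\ph)\neq 0$, which is transparent from the explicit form of $\ph$ in a $G_2$-coframe.
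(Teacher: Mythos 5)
Your argument is correct and coincides with the paper's proof: the symbol is identified in the standard way from $dd^\ph f = d(\nabla f\lrcorner\ph)$, and injectivity is obtained by using the transitive $G_2$-action to reduce to $\xi = a e^1$ and computing $\xi\wedge(\xi^\sharp\lrcorner\ph) = a^2(e^{123}+e^{145}+e^{167})\neq 0$. The extra remarks on lower-order terms and on the image lying in $\Lambda^3_1\oplus\Lambda^3_{27}$ are consistent with what the paper establishes elsewhere (Lemma \ref{l:ddphi_features}) but are not needed here.
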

\begin{proof}
Choose a $G_2$ coframe $(e^1,\ldots,e^7)$ of $T^*_xM$, so that 
\[
\ph|_x = e^{123} + e^{145} + e^{167} + e^{246} - e^{257} - e^{347} - e^{356}.
\]
Consider a non-zero $\xi \in T^*_xM$. 
Since $G_2$ acts transitively on the unit sphere in $T_x^*M$ preserving $\ph|_x$, 
it is not restrictive assuming that $\xi = a e^1,$ for a certain $a\neq0$. We then have 
\[
\xi \wedge (\xi^\sharp\lrcorner \ph) = a^2 \left(e^{123}+e^{145}+e^{167}\right). 
\]
From this, it easily follows that the map $c\mapsto c\, \xi \wedge (\xi^\sharp\lrcorner \ph)$ is injective. 
\end{proof}

We are now ready to prove the main result of this section, which represents the analogue of Theorem \ref{ThmSplittingKahler} 
for torsion-free $G_2$ 7-manifolds.
The technical details in the proof are similar to those seen in the K\"ahler case, 
so we will provide only a sketch of the main arguments.

\begin{thm}\label{thm:G2Hodge}
Let $(M,\ph)$ be a compact $7$-manifold with a torsion-free $G_2$-structure. Then there exists an $L^2$-orthogonal decomposition 
\[
K=\Im(dd^\ph)\oplus\mathcal{H}^3_{1}\oplus\mathcal{H}^3_{27}.
\]
\end{thm}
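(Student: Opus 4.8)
The plan is to mirror the proof of Theorem \ref{ThmSplittingKahler} step by step, replacing the K\"ahler data with the $G_2$ data already developed. First I would set up $dd^\ph$ as an operator $\Lambda^0(M)\to C$, using Lemma \ref{l:ddphi_features} to know that its image indeed lands in $C$, and then in $K = C\cap\Ker(d)$. Lemma \ref{l:G2injectivity} gives injectivity of the principal symbol, so after extending $dd^\ph$ to $H^2(M)\to L^2(C)\cap\overline{\Ker(d)}$ it has closed image, and Proposition \ref{p:abstractdecomp} (from the appendix) yields the orthogonal decomposition
\[
L^2(C)\cap\overline{\Ker(d)} = \Im(dd^\ph)\oplus\Ker\left((dd^\ph)^*|_{\overline{\Ker(d)}}\right).
\]

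Next I would compute the formal transpose $(dd^\ph)^t$ explicitly, just as in Step 2 of the K\"ahler proof. Writing $dd^\ph f = d\star(df\w\psi)$, integration by parts against a generic element of $C$ should produce an expression involving $\Delta$ composed with algebraic operations and $d^*$; the key point is that, because $\Delta$ preserves $G_2$-types (cf.\ the Harmonic forms subsection), on the subspace $\Lambda^3_1(M)\oplus\Lambda^3_{27}(M)$ this transpose is controlled by $\Delta$ up to the isomorphisms $\i_\ph,\j_\ph$ and wedging with $\ph,\psi$. Then, as in Steps 4--6, I would introduce the auxiliary operator $Q:C\to(\text{target})\oplus\Lambda^4(M)$, $Q(\gamma) = ((dd^\ph)^t\gamma, d\gamma)$, check that its principal symbol is injective by the same unit-sphere reduction used for $Q$ in the K\"ahler case and in Lemma \ref{l:G2injectivity} (choosing a $G_2$ coframe with $\xi = ae^1$), obtain the regularity statement, and conclude $\Ker\left((dd^\ph)^*|_{\overline{\Ker(d)}}\right) = \Ker(Q)$, consisting of smooth forms.

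The heart of the argument is then the identification $\Ker(Q) = \mathcal{H}^3_1\oplus\mathcal{H}^3_{27}$, analogous to Step 6 of the K\"ahler proof but now using the $G_2$-specific results already established. Since $\Delta$ preserves types, the condition $(dd^\ph)^t\gamma = 0$ together with the type restriction should force $\Delta\gamma$ to lie in a controlled subspace; combined with $d\gamma = 0$ and the fact (Lemma \ref{l:d727}, statement 2, and the orthogonality/transversality of $\Im(d^7_{27})$ with $\Ker(d|_{\Lambda_{27}})$) that a closed form in $\Lambda^3_{27}(M)$ which is also coclosed is harmonic, I would peel off the $\Lambda^3_1$ and $\Lambda^3_{27}$ components. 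For the $\Lambda^3_1$ part, a form $u\cdot\ph$ with $u\in\Lambda^0(M)$ is closed iff $du\w\ph = 0$ iff $u$ is constant, and such forms are automatically harmonic; for the $\Lambda^3_{27}$ part one uses that a closed element of $\Lambda^3_{27}(M)$ is coclosed exactly when it is harmonic, via identity \eqref{DifId327} or the Lemma \ref{l:d727} circle of ideas. Finally, as in Step 7--8, I would intersect the $L^2$-decomposition with the smooth forms $C$ (using $\mathcal{H}^3_1\oplus\mathcal{H}^3_{27}\subseteq C$ and elliptic regularity for $Q$) to descend to
\[
K = \Im(dd^\ph)\oplus\mathcal{H}^3_1\oplus\mathcal{H}^3_{27}
\]
for the original smooth operator.

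I expect the main obstacle to be the explicit computation of $(dd^\ph)^t$ and the verification that, on $C$, its kernel is exactly governed by $\Delta$ in the way needed: this requires careful bookkeeping with Bryant's formulae (Proposition \ref{propBryantFormulae}), the identities \eqref{e:d2id}, \eqref{DifId327} and \eqref{G2id1form}, and the isomorphisms $\i_\ph,\j_\ph$, since unlike the K\"ahler case the operator $dd^\ph$ has components in both $\Lambda^3_1(M)$ and $\Lambda^3_{27}(M)$ and these are coupled (Lemma \ref{l:ddphi_features}, statement 4). The analytic scaffolding (closed range, regularity, the abstract decomposition) is entirely parallel to the K\"ahler case and should go through verbatim; it is the representation-theoretic identification of the complement with $\mathcal{H}^3_1\oplus\mathcal{H}^3_{27}$ that carries the real content.
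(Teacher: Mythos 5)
Your overall strategy is the paper's: compute $(dd^\ph)^t$ by integration by parts, use the injective symbol of $dd^\ph$ plus Proposition \ref{p:abstractdecomp} to get an $L^2$ decomposition, identify the orthogonal complement via an auxiliary operator $Q$ with injective symbol, use that $\Delta$ preserves types to reduce the kernel computation to $\mathcal{H}^3_1\oplus\mathcal{H}^3_{27}$, and intersect back down to smooth forms. Your identification of the kernel is also essentially the paper's: $(dd^\ph)^t(u\cdot\ph+\gamma)=-\star(\Delta(u\cdot\ph+\gamma)\wedge\psi)=-7\Delta u\,\star 1$ forces $u$ constant, and then $d\gamma=0$ together with $\pi^2_{14}d^*\gamma=0$ forces $\gamma$ harmonic via Bryant's formulae.

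There is, however, one concrete gap in your analytic scaffolding: your auxiliary operator $Q(\gamma)=((dd^\ph)^t\gamma,d\gamma)$ has only two components, and you treat $C$ as if ``$L^2(C)$'' were a space of sections of a bundle. But $C$ is cut out by the \emph{differential} condition $\pi^2_{14}d^*=0$, not an algebraic one, so (as the Introduction stresses) this condition must be incorporated into $Q$ itself; the paper's $Q$ has the third component $\pi^2_{14}d^*(\alpha)$ with target $\Lambda^2_{14}(M)$. Without it your symbol argument fails: take $\xi=e^1$ and any nonzero $\beta\in\Lambda^2_{14}(T^*_xM)$ with $e_1\lrcorner\beta=0$ (an $8$-dimensional space, the $\mathfrak{su}(3)$ part of $\mathfrak{g}_2$). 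Then $\alpha\coloneqq e^1\wedge\beta$ is a nonzero element of $\Lambda^3_{27}(T^*_xM)$ (since $\alpha\wedge\ph=-e^1\wedge\star\beta=\pm\star(e_1\lrcorner\beta)=0$ and $\alpha\wedge\psi=e^1\wedge(\beta\wedge\psi)=0$) satisfying $\xi\wedge\alpha=0$ and $\star(\xi\wedge(\xi^\sharp\lrcorner\alpha)\wedge\psi)=\star(\alpha\wedge\psi)=0$, so it lies in the kernel of your two-component symbol. Consequently the regularity statement and the identification $\Ker\bigl((dd^\ph)^*|_{\overline{\Ker(d)}}\bigr)=\Ker(Q)$ do not go through as written; only the extra component $\pi^2_{14}(\xi^\sharp\lrcorner\alpha)=\beta\neq 0$ kills this kernel. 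This is not cosmetic: $\Lambda^3_{27}(M)\cap\Ker(d)$ is infinite-dimensional (it contains $d\beta$ for every coclosed $\beta\in\Lambda^2_{14}(M)$), so the complement of $\Im(dd^\ph)$ inside $\bigl(\Lambda^3_1(M)\oplus\Lambda^3_{27}(M)\bigr)\cap\Ker(d)$ alone is not $\mathcal{H}^3_1\oplus\mathcal{H}^3_{27}$; the condition $\pi^2_{14}d^*=0$ must enter the functional-analytic setup on the same footing as $d$.
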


\begin{proof}
Consider the operator
\[
dd^\ph:\Lambda^0(M)\rightarrow\Lambda^3_1(M)\oplus\Lambda^3_{27}(M),  
\]
and a generic element $u\cdot\ph+\gamma \in \Lambda^3_1(M)\oplus\Lambda^3_{27}(M)$.
Endow both spaces with the $L^2$ metric. Notice that
\[
\begin{split}
\int_M (dd^\ph f, u\cdot\ph+\gamma )\vol_{g_\ph}
&=\int_M (\nabla f\lrcorner\ph,d^*(u\cdot\ph+\gamma))\vol_{g_\ph}\\
&=\int_M (\star(df\w\psi),d^*(u\cdot\ph+\gamma))\vol_{g_\ph}\\
&=\int_Mdf\wedge\psi\wedge d^*(u\cdot\ph+\gamma)\\
&=-\int_Mf\, \psi\wedge dd^*(u\cdot\ph+\gamma). 
\end{split}
\]
It follows that, within the given spaces, the adjoint of $dd^\ph$ is the map
\[
(dd^\ph)^t = - \star(dd^*(\cdot)\wedge\psi) : \Lambda^3_1(M)\oplus\Lambda^3_{27}(M) \to \Lambda^0(M). 
\]

Consider now the unbounded operator $dd^\ph:L^2(M)\to L^2(\Lambda^3_1(M)\oplus\Lambda^3_{27}(M))$ with domain $D=H^2(M)$. 
Its adjoint $(dd^\ph)^*$ is an appropriate extension of $(dd^\ph)^t$. 
General theory (cf.~Proposition \ref{p:abstractdecomp}) and Lemma \ref{l:G2injectivity} lead to the decomposition
\[
L^2(\Lambda^3_1(M)\oplus\Lambda^3_{27}(M))=\Im(dd^\ph)\oplus \Ker((dd^\ph)^*). 
\]

We now want to refine this setup, studying the operator $dd^\ph:\Lambda^0(M)\rightarrow K$. To this end, we consider the operators 
\[
d:\Lambda^3(M)\rightarrow\Lambda^4(M), \quad \pi^2_{14}d^*:\Lambda^3(M)\rightarrow\Lambda^2_{14}(M),
\]
and the auxiliary operator $Q:\Lambda^3_1(M)\oplus\Lambda^3_{27}(M)\rightarrow\Lambda^0(M)\oplus\Lambda^4(M)\oplus\Lambda^2_{14}(M)$
\[
Q:\alpha\mapsto\left((dd^\ph)^t(\alpha),d\alpha,\pi^2_{14}d^*(\alpha)\right).
\]
Its principal symbol $\sigma(Q)$ gives the following homomorphism at $x\in M$ 

\[
\sigma(Q)|_x(\xi): \alpha \mapsto \left(- \star (\xi \wedge(\xi^\sharp\lrcorner\alpha)\wedge\psi), \xi\wedge\alpha, \pi^2_{14}(\xi^\sharp \lrcorner \alpha)\right),
\]
for every $\xi\in T^*_xM$. We claim that the principal symbol is injective. 
Indeed, let $\xi\neq0$ and consider $\alpha\in \Ker(\sigma(Q)|_x(\xi))\subseteq \Lambda^3_1(T^*_xM)\oplus\Lambda^3_{27}(T^*_xM)$. 
Using $\xi\wedge\alpha=0$ we obtain
\[
0 =  \star (\xi \wedge(\xi^\sharp\lrcorner\alpha)\wedge\psi) = |\xi|^2 \star(\alpha\wedge\psi),
\]
which implies $\alpha\in\Lambda^3_{27}(T^*_xM)$. Now, working with respect to a $G_2$ coframe $(e^1,\ldots,e^7)$ of $T^*_xM$ 
and using the $G_2$ action to reduce to the case $\xi = a e^1$ with $a\neq0$, we see that the condition $\xi\wedge\alpha=0$ implies  
$\xi^\sharp\lrcorner \alpha \wedge \psi=0$, 
namely $\xi^\sharp\lrcorner \alpha \in \Lambda^2_{14}(T^*_xM)$. On the other hand, 
since $\alpha\in \Ker(\sigma(Q)|_x(\xi))$ we also have $\pi^2_{14}(\xi^\sharp \lrcorner \alpha)=0$, which forces $\xi^\sharp\lrcorner \alpha = 0$.  
This in turn implies  $|\xi|^2\alpha = \xi \wedge (\xi^\sharp \lrcorner \alpha) =0$,
whence $\alpha=0$.

Since the principal symbol is injective, we get 
\[
\Ker\left((dd^\ph)^*|_{\overline{\Ker(d)\cap\Ker(\pi^2_{14}d^*)}}\right) = \Ker\left((dd^\ph)^t|_{\Ker(d)\cap\Ker(\pi^2_{14}d^*)}\right)=\Ker((dd^\ph)^t|_{K}).
\]
We now want to characterize $\Ker((dd^\ph)^t|_{K})$. As a first step, notice that 
\[
(dd^\ph)^t|_{K}=-\star(\Delta|_{K}(\cdot)\wedge\psi).
\]
It follows that
\[
\Ker((dd^\ph)^t|_{K})=\Ker(\Delta(\cdot)\wedge\psi)\cap K,
\]
where $\Delta$ has domain $\Lambda^3_1(M)\oplus\Lambda^3_{27}(M)$.
Recall that $\Delta$ preserves types.
Recall also that, for all $\gamma\in\Lambda^3_{27}(M)$, $\gamma\wedge\psi=0$. Then
\[
\Delta(u\cdot\ph +\gamma)\w\psi = \Delta(u\cdot\ph)\w\psi + \Delta\gamma\w\psi  = 7\Delta u \star1. 
\]
Since $\R\cdot \ph \leq K$, we obtain 
\[
\Ker(\Delta(\cdot)\wedge\psi)\cap K = {\left(\R\cdot\ph\oplus\Lambda^3_{27}(M)\right)}\cap K  = \R\cdot\ph\oplus(\Lambda^3_{27}(M)\cap K).
\]
Now,  Bryant's formula for $d\gamma$ shows that
\[
d\gamma=0 \quad\Leftrightarrow\quad d^{27}_7\gamma=0\quad \mbox{and} \quad d^{27}_{27}\gamma=0.
\]
Comparing with the formula for $\d^*\gamma=0$, we find that the additional condition $\pi^2_{14}(d^*\gamma)=0$ implies that  
$\gamma\in\mathcal{H}^3_{27}$, so that $\Lambda^3_{27}(M)\cap K = \mathcal{H}^3_{27}$.
This shows that
$\Ker((dd^\ph)^t|_{K})=\R\cdot\ph\oplus\mathcal{H}^3_{27}=\mathcal{H}^3_{1}\oplus \mathcal{H}^3_{27}$.

Putting everything together, we learn that
\[
L^2(\Lambda^3_1(M)\oplus\Lambda^3_{27}(M))\cap\overline{\Ker(d)\cap\Ker(\pi^2_{14}d^*)} = 
\Im(dd^\ph)\oplus\mathcal{H}^3_{1}\oplus \mathcal{H}^3_{27}.
\]
Intersecting both sides with $K$, we obtain
$$K=\Im(dd^\ph)\oplus\mathcal{H}^3_{1}\oplus \mathcal{H}^3_{27}.$$
\end{proof}

\begin{remark}
Notice the analogies with the K\"ahler case: $\Lambda^3_{27}(M)$ corresponds to $\Lambda^{1,1}_0$, 
$\Lambda^3_{1}(M)\oplus\Lambda^3_{27}(M)$ corresponds to $\Lambda^{1,1}_\R$, 
and the extra condition $\gamma\in \Ker(\pi^2_{14}d^*)$ replaces the linear algebra formula for $\star\alpha$. Alternatively, $\pi^2_{14}d^*$ plays the role of $d^c$. 
Notice, however, that $\ker(d^c)$ is trivial when $d^c$ is restricted to $\Lambda^{1,1}_\R\cap\ker(d)$, so it does not appear in the discussion 
in Section \ref{sec:cohomologyBC}.
\end{remark}

Using Theorem \ref{thm:G2Hodge} we obtain the following analogue of the global $\partial\bar\partial$ (or $dd^\omega$) lemma.

\begin{cor}[Global $dd^\ph$ lemma]\label{cor:G2globallemma} 
Let $(M,\ph)$ be a compact $7$-manifold with a torsion-free $G_2$-structure. 
Fix a form $\alpha\in\Lambda^3_{1}(M)\oplus\Lambda^3_{27}(M)$. 
If (i) $\alpha$ is globally $d$-exact and (ii) $\pi^2_{14}d^*(\alpha)=0$, then $\alpha$ is globally $dd^\ph$-exact.
\end{cor}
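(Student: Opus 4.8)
The plan is to obtain this as an immediate corollary of Theorem~\ref{thm:G2Hodge}, following verbatim the scheme by which Corollary~\ref{cor:ddomega} was deduced from Theorem~\ref{ThmSplittingKahler}. Concretely, I would establish the equality
\[
\Im(d)\cap K = \Im(dd^\ph),
\]
where $K$ is the space introduced before Lemma~\ref{l:G2injectivity}, and then simply observe that hypotheses (i) and (ii) say exactly that the given $\alpha$ lies in $\Im(d)\cap K$.

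First I would check that $\alpha\in K$. By assumption $\alpha\in\Lambda^3_1(M)\oplus\Lambda^3_{27}(M)$, and hypothesis (ii) is precisely $\pi^2_{14}d^*(\alpha)=0$, so $\alpha\in C$; moreover, being globally $d$-exact, $\alpha$ is in particular $d$-closed, hence $\alpha\in C\cap\Ker(d)=K$. Theorem~\ref{thm:G2Hodge} then yields a decomposition $\alpha=dd^\ph f+\eta$ with $f\in\Lambda^0(M)$ and $\eta\in\mathcal{H}^3_1\oplus\mathcal{H}^3_{27}$. It remains to kill the harmonic remainder $\eta$. On a compact Riemannian manifold, exact forms are $L^2$-orthogonal to harmonic forms; now $\eta=\alpha-dd^\ph f$ is a difference of exact forms ($\alpha$ by hypothesis, $dd^\ph f$ by Lemma~\ref{l:ddphi_features}(1)), hence exact, while at the same time $\eta\in\mathcal{H}^3_1\oplus\mathcal{H}^3_{27}\subseteq\mathcal{H}^3$ is harmonic. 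A form that is simultaneously exact and harmonic vanishes, so $\eta=0$ and $\alpha=dd^\ph f$. Equivalently and more cleanly: $\Im(d)\cap K$ is $L^2$-orthogonal to $\mathcal{H}^3_1\oplus\mathcal{H}^3_{27}$, so the orthogonal splitting of Theorem~\ref{thm:G2Hodge} forces $\Im(d)\cap K\subseteq\Im(dd^\ph)$; the reverse inclusion is Lemma~\ref{l:ddphi_features}(1)--(3) together with the (automatic) closedness of exact forms.

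I expect no genuine obstacle: the substantive work has already been carried out in Theorem~\ref{thm:G2Hodge} and in the preparatory Lemma~\ref{l:ddphi_features}, so this is really just a bookkeeping step. The only point deserving care is that the summand $\mathcal{H}^3_1\oplus\mathcal{H}^3_{27}$ appearing in the Hodge decomposition consists of \emph{bona fide} harmonic forms — this is exactly the assertion that $\Delta$ preserves the $G_2$-type decomposition, recorded in the subsection on harmonic forms — without which the orthogonality of exact forms to harmonic forms could not be invoked. This is the $G_2$ analogue of the role played by the identity $\mathcal{H}^2(M)\cap\Lambda^{1,1}_\R(M)=\mathcal{H}^{1,1}_\R$ in the proof of Corollary~\ref{cor:ddomega}.
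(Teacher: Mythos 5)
Your proposal is correct and follows essentially the same route as the paper: both deduce the corollary from Theorem \ref{thm:G2Hodge} by noting that exact forms are $L^2$-orthogonal to harmonic forms and that $\Delta$ preserving types gives $\mathcal{H}^3(M)\cap(\Lambda^3_1(M)\oplus\Lambda^3_{27}(M))=\mathcal{H}^3_1\oplus\mathcal{H}^3_{27}$, so the harmonic component of $\alpha$ in the splitting of $K$ must vanish. The only cosmetic difference is that you phrase the orthogonality argument via the remainder $\eta=\alpha-dd^\ph f$ being simultaneously exact and harmonic, while the paper states directly that $\Im(d)\cap(\Lambda^3_1(M)\oplus\Lambda^3_{27}(M))$ is orthogonal to $\mathcal{H}^3_1\oplus\mathcal{H}^3_{27}$.
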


\begin{proof}
The proof is similar to the K\"ahler case. Since the $G_2$-structure is torsion-free, we have 
$\mathcal{H}^3(M)\cap \left(\Lambda^3_1(M)\oplus\Lambda^3_{27}(M)\right) = \mathcal{H}^3_{1}\oplus \mathcal{H}^3_{27}$. Consider $d:\Lambda^2(M)\rightarrow\Lambda^3(M)$. Since $\Im(d)$ is orthogonal to harmonic forms, $\Im(d)\cap  \left(\Lambda^3_1(M)\oplus\Lambda^3_{27}(M)\right)$ 
is orthogonal to $\mathcal{H}^3_{1}\oplus \mathcal{H}^3_{27}$. Theorem \ref{thm:G2Hodge} thus shows that $\Im(d)$ is contained in $\Im(dd^\ph)$. 
The opposite inclusion follows from Lemma \ref{l:ddphi_features}. 
\end{proof}

\begin{remark}
Since $\ph\in\mathcal{H}^3_1$, Theorem \ref{thm:G2Hodge} shows that it is not globally $dd^\ph$ exact when $M$ is compact. 
However, this is obvious since the 3-form defining a torsion-free $G_2$-structure cannot be $d$-exact. 
We will see in Section \ref{s:cddlocal} that $\ph$ is not even locally $dd^\ph$-exact. 
  
The situation is different in the non-compact setting. For instance, the standard flat torsion-free $G_2$-structure $\ph_o$ on $\R^7$ is globally $dd^{\ph_o}$-exact:
\[
\ph_o(x) = dd^{\ph_o}\left(\tfrac16|x|^2\right),
\]
for all $x\in\R^7$.  
\end{remark}

\paragraph{{Alternative characterization.}}Theorem \ref{thm:G2Hodge} provides a characterization of the space $\Im(dd^\ph)$ as the space orthogonal to certain harmonic forms. Using Bryant's formulae and Lemma \ref{l:d727} we can provide an alternative characterization. 
Recall the expression
\[
dd^\ph f=-\frac37\Delta f\cdot \ph+d^7_{27}(df).
\]
Recall also that, when $M$ is compact, $\int_M\Delta f \vol_{g_\ph}=0$. It thus makes sense to consider the space
\[
X\coloneqq \left\{
u\cdot\ph+\gamma\in (\Lambda^3_{1}\oplus\Lambda^3_{27})\cap \Ker(d)~:~ 
(i) \int_Mu\vol_{g_\ph}=0,~(ii)~\gamma \in \Im(d^7_{27})
\right\}.
\]

\begin{prop}
Assume $(M,\ph)$ is a compact $7$-manifold with a torsion-free $G_2$-structure. Then $X=\Im(dd^\ph)$. In particular, all forms in $X$ are exact.
\end{prop}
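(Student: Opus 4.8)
The plan is to prove the two inclusions $\Im(dd^\ph)\subseteq X$ and $X\subseteq\Im(dd^\ph)$ separately. The first is essentially a repackaging of Lemma \ref{l:ddphi_features}, while the second rests on the solvability of a scalar Poisson equation together with the rigidity statement for $d^7_{27}$ in Lemma \ref{l:d727}.

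For $\Im(dd^\ph)\subseteq X$, I would take $f\in\Lambda^0(M)$ and invoke Lemma \ref{l:ddphi_features}, which gives $dd^\ph f = -\tfrac37\Delta f\cdot\ph + d^7_{27}(df)$, a form that is exact (hence $d$-closed) and lies in $\Lambda^3_1(M)\oplus\Lambda^3_{27}(M)$. Writing $u\coloneqq-\tfrac37\Delta f$ for its $\Lambda^3_1$-coefficient, compactness of $M$ gives $\int_M u\,\vol_{g_\ph}=-\tfrac37\int_M\Delta f\,\vol_{g_\ph}=0$, and its $\Lambda^3_{27}$-component $d^7_{27}(df)$ manifestly lies in $\Im(d^7_{27})$. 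Thus both defining conditions of $X$ are satisfied.

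For $X\subseteq\Im(dd^\ph)$, I would fix $u\cdot\ph+\gamma\in X$, so that $d(u\cdot\ph+\gamma)=0$, $\int_M u\,\vol_{g_\ph}=0$, and $\gamma=d^7_{27}(\beta)$ for some $\beta\in\Lambda^1(M)$. Because $\int_M u\,\vol_{g_\ph}=0$, Hodge theory for the scalar Laplacian produces $f\in\Lambda^0(M)$ with $\Delta f=-\tfrac73 u$, so that $dd^\ph f = u\cdot\ph + d^7_{27}(df)$ by Lemma \ref{l:ddphi_features}. It then remains to identify $d^7_{27}(df)$ with $\gamma$. Here I would use that both $u\cdot\ph+\gamma$ and $dd^\ph f$ are $d$-closed (the latter being exact), so their difference $d^7_{27}(df)-\gamma=d^7_{27}(df-\beta)$ is $d$-closed; by statement 2 of Lemma \ref{l:d727}, an element of $\Im(d^7_{27})$ lying in $\Ker(d)$ must vanish, whence $d^7_{27}(df)=\gamma$ and $dd^\ph f=u\cdot\ph+\gamma$. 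The final assertion follows because $X=\Im(dd^\ph)\subseteq\Im(d)$.

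The main obstacle, such as it is, is the last step of the second inclusion: condition (ii) by itself does not ensure that the function $f$ chosen to fix the $\Lambda^3_1$-component also reproduces the prescribed $\Lambda^3_{27}$-component. One must re-inject the closedness hypothesis to see that the discrepancy $d^7_{27}(df-\beta)$ lies in $\Im(d^7_{27})\cap\Ker(d)$ and then appeal to the transversality and orthogonality of these spaces from Lemma \ref{l:d727}. Everything else is routine bookkeeping with Bryant's formulae.
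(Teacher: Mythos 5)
Your proof is correct and follows essentially the same route as the paper: the forward inclusion is read off from Lemma \ref{l:ddphi_features}, and the reverse inclusion solves $\Delta f=-\tfrac73 u$ (possible precisely because $\int_M u\,\vol_{g_\ph}=0$) and then kills the discrepancy $d^7_{27}(df-\beta)\in\Im(d^7_{27})\cap\Ker(d)$ via Lemma \ref{l:d727}. The paper phrases this last step as uniqueness of the solution to the non-homogeneous equation $d\gamma=-du\wedge\ph$ within $\Im(d^7_{27})$, which is the same argument.
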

\begin{proof}
Clearly, $X$ contains all forms of type $dd^\ph f$. 

Conversely, choose $u\cdot\ph+\gamma\in X$. The closedness condition is equivalent to the equation $d\gamma=-du\wedge\ph$. 
For any given $u$, this is a linear non-homogeneous equation on $\gamma$. 
The fact $\int_Mu\vol_{g_\ph}=0$ is equivalent to the solvability of the equation $\Delta f=-(7/3)u$. 
In this case the above equation for $\gamma$ admits the solution $\gamma=d^7_{27}(df)$. 
The space of solutions is parametrized by the solutions of the corresponding homogeneous equation $d\gamma=0$. 
Lemma \ref{l:d727} shows that the solution is unique.
\end{proof}

\subsection{{Cohomology}}

The above results allow us to find a cohomological point of view on some of the spaces of harmonic 3-forms, as follows.

Let $(M,\ph)$ be a compact torsion-free $G_2$ 7-manifold.  The sequence 
\begin{equation}\label{eq:G2complex}
\Lambda^0(M) \xrightarrow[]{dd^\varphi} \Lambda^3_1(M)\oplus \Lambda^3_{27}(M) \xrightarrow[]{(d,\pi^2_{14}d^*)} 
\Lambda^4(M)\oplus \Lambda^2_{14}(M)
\end{equation}
defines a complex by Lemma \ref{l:ddphi_features}, since $(d,\pi^2_{14}d^*)\circ dd^\ph=0$. 
This allows us to define the $dd^\ph$-cohomology space 
\[
H^\ph(M)=(\ker(d)\cap\ker(\pi^2_{14}d^*))/\Im(dd^\ph) = K/\Im(dd^\ph). 
\]
This is clearly analogous to Bott-Chern cohomology. 

We can now summarize Theorem \ref{thm:G2Hodge} as follows. 
\begin{cor}\label{cor:G2coh} 
The linear map
\[
H^\ph(M)\to H^3(M;\R),\quad [\alpha]_\ph \mapsto [\alpha]
\]
is injective and defines an isomorphism 
$H^\ph(M)\simeq \mathcal{H}^3_{1} \oplus \mathcal{H}^3_{27}.$
\end{cor}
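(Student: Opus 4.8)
The plan is to deduce everything formally from Theorem \ref{thm:G2Hodge} and Corollary \ref{cor:G2globallemma}, mirroring the argument behind Proposition \ref{prop:BC} in the K\"ahler case. First I would verify that the map is well-defined. Every $\alpha\in K$ is $d$-closed by the definition of $K$, hence determines a class $[\alpha]\in H^3(M;\R)$; and if $\alpha=dd^\ph f$ then $\alpha$ is $d$-exact by statement 1 of Lemma \ref{l:ddphi_features}, so $[\alpha]=0$. Therefore $[\alpha]_\ph\mapsto[\alpha]$ descends to a well-defined linear map $H^\ph(M)=K/\Im(dd^\ph)\to H^3(M;\R)$.

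Next I would establish injectivity. Suppose $\alpha\in K$ with $[\alpha]=0$ in $H^3(M;\R)$, i.e.\ $\alpha$ is globally $d$-exact. Since $\alpha\in K\subseteq\big(\Lambda^3_1(M)\oplus\Lambda^3_{27}(M)\big)\cap\Ker(\pi^2_{14}d^*)$, the hypotheses of Corollary \ref{cor:G2globallemma} are satisfied, so $\alpha\in\Im(dd^\ph)$, that is $[\alpha]_\ph=0$. Equivalently, one may argue directly from the Hodge decomposition: $d$-exact forms are $L^2$-orthogonal to harmonic forms, so $\alpha$ is orthogonal to $\mathcal{H}^3_1\oplus\mathcal{H}^3_{27}$, and Theorem \ref{thm:G2Hodge} then forces $\alpha\in\Im(dd^\ph)$.

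Finally I would identify the image. Theorem \ref{thm:G2Hodge} gives the direct-sum decomposition $K=\Im(dd^\ph)\oplus\big(\mathcal{H}^3_1\oplus\mathcal{H}^3_{27}\big)$, so the projection onto the second summand has kernel exactly $\Im(dd^\ph)$ and induces an isomorphism $H^\ph(M)=K/\Im(dd^\ph)\;\xrightarrow{\ \sim\ }\;\mathcal{H}^3_1\oplus\mathcal{H}^3_{27}$. Composing this with the de Rham map above and using that harmonic $3$-forms on a compact manifold inject into $H^3(M;\R)$, I obtain that the map of the statement identifies $H^\ph(M)$ with the subspace of de Rham classes represented by forms in $\mathcal{H}^3_1\oplus\mathcal{H}^3_{27}$. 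I do not expect a genuine obstacle here, since the analytic substance is already in Theorem \ref{thm:G2Hodge}; the only point requiring care is the double role played by the finite-dimensional space $\mathcal{H}^3_1\oplus\mathcal{H}^3_{27}$, which appears both as the algebraic complement of $\Im(dd^\ph)$ inside $K$ (yielding the abstract isomorphism) and as a space of genuinely harmonic, hence closed and cohomologically nontrivial, forms (yielding the geometric meaning of the comparison map).
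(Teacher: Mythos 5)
Your proposal is correct and follows essentially the same route as the paper, which presents this corollary precisely as a formal summary of Theorem \ref{thm:G2Hodge}: the direct-sum decomposition $K=\Im(dd^\ph)\oplus\mathcal{H}^3_1\oplus\mathcal{H}^3_{27}$ yields the isomorphism with the harmonic space, and injectivity into de Rham cohomology follows from the orthogonality of exact forms to harmonic forms (equivalently, from Corollary \ref{cor:G2globallemma}). No gaps.
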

This provides a cohomological interpretation of $\mathcal{H}^3_{1} \oplus \mathcal{H}^3_{27}$, 
analogous to the interpretation of $\mathcal{H}^{1,1}_\R$ in terms of $H^{1,1}_{BC}(M;\R)$.

Notice that, in general, the $dd^\ph$-cohomology depends on the specific $G_2$ structure. 
If however $\mathrm{Hol}(g_\ph) = G_2$ then $\mathcal{H}^3_7=0$ \cite{Joyce}, so the above map is an isomorphism. 
In other words, in this case $dd^\ph$-cohomology is isomorphic to $H^3(M)$. In this sense it is independent of $\ph$.

\begin{remark}
If instead we introduce the space
$
\widetilde{\Lambda}^3_1(M)\coloneqq \left\{u\cdot\ph~:~\int_M u \vol_{g_\ph}=0\right\} \subset \Lambda^3_1(M), 
$
we can analogously consider the sequence
\[
\Lambda^0(M)\xrightarrow{dd^\ph} \widetilde{\Lambda}^3_1(M)\oplus \Lambda^3_{27}(M)  \xrightarrow[]{(d,\pi^2_{14}d^*)} \Lambda^4(M)\oplus\Lambda^2_{14}(M),
\]
and obtain a cohomological point of view on $\mathcal{H}^3_{27}$ in terms of a reduced $dd^\ph$-cohomology space:
\[
H^\ph_0(M) \coloneqq (\ker(d)\cap\ker(\pi^2_{14}d^*))/\Im(dd^\ph)\simeq \mathcal{H}^3_{27}. 
\]
\end{remark}

\begin{remark}\label{rem:alternativeG2}
As in Remark \ref{rem:alternativeKahler}, an alternative proof of Theorem \ref{thm:G2Hodge} would start by showing that the complex \eqref{eq:G2complex} is elliptic: for every $x\in M$ and $\xi\in T^*_xM\smallsetminus\{0\}$ we have
\[
\Im(\sigma(dd^\ph)|_x(\xi)) 	= \R\cdot \xi \wedge (\xi^\sharp\lrcorner \ph)  
						= \ker(\sigma(d)|_x(\xi)) \cap \ker(\sigma(\pi^2_{14}d^*)|_x(\xi)).
\]
The proof is similar to those seen above. We then obtain a fourth order elliptic, self-adjoint, linear differential operator
\[
\Box \coloneqq (dd^\ph)(dd^\ph)^* + d^*dd^*d : \Lambda^3_1(M)\oplus \Lambda^3_{27}(M) \to \Lambda^3_1(M)\oplus \Lambda^3_{27}(M), 
\]
which yields the $L^2$-orthogonal splitting $\Lambda^3_1(M)\oplus \Lambda^3_{27}(M) = \Im(\Box)\oplus\ker(\Box)$. 
The thesis of Theorem \ref{thm:G2Hodge} then follows after intersecting both sides of this identity with $\ker(d)\cap\ker(\pi^2_{14}d^*)$. 
\end{remark}

\subsection{A counterexample to the local $dd^\ph$ lemma}\label{s:cddlocal}

We have already argued, in the complex case, the very different nature of the global, vs.~the local, $\partial\bar\partial$ lemma.
That said, it is interesting to speculate whether in the $G_2$ case there exists a local version of the $dd^\ph$ lemma. 

Of course, this requires formulating a candidate statement. 
The most obvious such statement would be: given any $7$-manifold $M$ with a torsion-free $G_2$-structure $\ph$, 
any form $\alpha\in K$ is locally $dd^\ph$-exact.

To investigate this, let us assume $M$ is compact. 
Corollary \ref{cor:G2globallemma} shows that any exact form in $K$ is globally of type $dd^\ph(f)$, so it remains to study the harmonic ones. 
Let us focus on $\alpha=\ph$ and assume that locally
\[
\ph=dd^\ph f  =\lambda_\ph(\nabla\nabla f)= \frac12 \i_\ph(\Hess_{g_\ph}(f)).
\]
This implies $g_\ph = 3 \Hess_{g_\ph}(f)$, i.e., $(M,g_\ph)$ is of the following type. 

\begin{definition}[\cite{AmariArmstrong}]
A Riemannian manifold $(M,g)$ is of {\em Hessian type} if locally $g = \mathrm{Hess}_g (f)$, for some smooth function $f$. 
\end{definition}

Amari and Armstrong \cite{AmariArmstrong} prove that any such manifold has vanishing Pontryagin classes.  
On the other hand, if $M$ is compact and  $\mathrm{Hol}(g_\ph) = G_2$ (or, more generally, $g_\ph$ is not flat), 
Joyce \cite{Joyce} shows that $p_1(M)\neq 0$. 
We conclude that on such manifolds $\ph$ is not locally of type $dd^\ph f$. 

The form $\ph$ thus represents a counterexample to the above statement in the compact holonomy $G_2$ case. 

\begin{remark}
This result was, in some sense, to be expected. 
In Section \ref{s:dedebar} we discussed the fact that the local $\partial\bar\partial$ lemma relies on the existence of local pluriharmonic functions. 
In this case, the analogous functions satisfy the condition $dd^\ph f=0$. 
As shown in \cite{HL:intropotential}, such functions are very scarce on torsion-free $G_2$ manifolds.
\end{remark}

\begin{remark}
When $(M,g)$ is compact it is impossible that $g=\Hess_g(f)$ globally: 
indeed, such $f$ would be strictly convex so it could not admit a maximum point. 
This underlines the importance of defining Hessian type in terms of local conditions. 
\end{remark}

\subsection{The calibration $\star\ph$.}
A torsion-free $G_2$ 7-manifold $(M,\ph)$ has a second natural calibration given by the parallel $4$-form $\psi = \star\ph$ \cite{HL:calibratedgeometry}. 
We can thus consider the operator 
\[
dd^\psi:\Lambda^0(M)\to \Lambda^4(M),\quad dd^\psi(f) = d(\nabla f\lrcorner \psi) = -d\star(df\wedge \ph), 
\]
and use it to obtain the analogues of Theorem \ref{thm:G2Hodge} and corollaries \ref{cor:G2globallemma} and \ref{cor:G2coh} for 
differential $4$-forms. The discussion is similar, so we only provide a sketch of the main arguments.  

First, for every $f\in\Lambda^0(M)$,
\begin{itemize}
\item $dd^\psi f\in\Lambda^4_1(M)\oplus\Lambda^4_{27}(M) = \star\Lambda^3_1(M)\oplus \star\Lambda^3_{27}(M)$;
\item $d^*(dd^\psi f) = -\tfrac17\star(dd^*df\w\ph) \in \Lambda^3_7(M)$. This follows from Bryant-Harvey's formulae and shows that 
$\pi^3_1\oplus\pi^3_{27} (d^*(dd^\psi f)) = 0$. 
\end{itemize}
Arguing as in the proof of Lemma \ref{l:G2injectivity} it can be shown that the principal symbol of $dd^\psi$ is injective.
We can then introduce the complex
\[
\Lambda^0(M)	\xrightarrow[]{dd^\psi} \Lambda^4_1(M)\oplus\Lambda^4_{27}(M) 
			\xrightarrow[]{(d,(\pi^3_1\oplus\pi^3_{27})\circ d^*)} \Lambda^5(M)\oplus \Lambda^3_1(M) \oplus \Lambda^3_{27}(M).
\]
The $dd^\psi$-cohomology space is defined as follows
\[
H^\psi(M) \coloneqq (\ker(d) \cap \ker((\pi^3_1\oplus\pi^3_{27})\circ d^* )/\Im(dd^\psi).
\]
The analogue of Theorem \ref{thm:G2Hodge} in this setting shows the existence of an injective linear map  
\[
H^\psi(M)\to H^4(M),\quad [\alpha]_\psi \mapsto [\alpha], 
\]
which defines an isomorphism 
$H^\psi(M)\simeq \mathcal{H}^4_{1} \oplus \mathcal{H}^4_{27}.$ The Hodge operator provides an isomorphism between the $dd^\ph$-cohomology space $H^\ph(M)$ and the 
$dd^\psi$-cohomology space $H^\psi(M)$.

\section{Calabi-Yau 6-manifolds}\label{Sec:CY6}
Let $(N,h,J,\omega,\Omega)$ be a Calabi-Yau 6-manifold: $J$ is a complex structure, 
$h$ is a K\"ahler metric with K\"ahler form $\omega = h(J\cdot,\cdot)$,  
and $\Omega = \Omega^++i\Omega^-$ is a holomorphic complex volume form. 
In particular, $\Omega^+$ is a parallel calibration on the Riemannian manifold $(N,h)$ \cite{HL:calibratedgeometry}, so one can consider the operator $dd^{\Omega^+}$. 

It is well-known that the Calabi-Yau SU(3)-structure on $N$ induces a torsion-free $G_2$-structure on $M=N\times S^1$ defined by the 3-form
\[
\ph = \omega \wedge \eta + \Omega^+, 
\] 
where $\eta\in\Lambda^1(T^*S^1)$, $|\eta|=1$. Let $f\in \Lambda^0(N)$. Then 
\[
\begin{split}
dd^\ph f	&=	d(\nabla^{g_\phi} f \lrcorner \ph) = d(\nabla^h f \lrcorner \omega) \wedge \eta + d(\nabla^h f \lrcorner \Omega^+)\\
		&= dd^\omega f \wedge \eta + dd^{\Omega^+}f,
\end{split}
\]
showing how the operator $dd^\ph$ on $M$ is related to the $dd^\omega$ operator and to the operator $dd^{\Omega^+}$ on $N.$ 

In this section, we investigate the properties of the operator $dd^{\Omega^+}$ and we prove a global $dd^{\Omega^+}$ lemma. 
As we will see, the discussion shares many similarities with the $G_2$ case.

\subsection{Linear algebra} 
Let $(V,h,J,\omega,\Omega = \Omega^++i\Omega^-)$ be an SU(3) vector space. 
Recall the identities $\omega = h(J\cdot,\cdot)$, $\Omega^- = J^*\Omega^+ = \star \Omega^+$, $\omega\w\Omega^\pm=0$, 
and $2\omega^3=3\Omega^+\w\Omega^-$. 
Recall also the decompositions into SU(3)-irreducible subspaces
\[
\begin{split}
\Lambda^2V^*	&= \R \cdot \omega \oplus \Lambda^2_6 \oplus \Lambda^2_8,\\
\Lambda^3V^*	&= \R \cdot \Omega^+ \oplus \R\cdot \Omega^- \oplus \Lambda^3_6 \oplus \Lambda^3_{12}, 
\end{split}
\]
where 
\[
\begin{split}
\Lambda^2_6	&= \left\{\star(\alpha\wedge\Omega^+) ~:~  \alpha\in \Lambda^1V^* \right\}, \\ 
\Lambda^2_8	&= \left\{\sigma\in\Lambda^2V^* ~:~  J^*\sigma=\sigma \mbox{ and } \sigma\wedge\omega^2=0\right\} \cong \mathfrak{su}(3),  
\end{split}
\]
and 
\[
\begin{split}
\Lambda^3_6	&= \left\{\alpha \wedge \omega ~:~  \alpha\in \Lambda^1V^*\right\}, \\
\Lambda^3_{12}	&= \left\{\rho\in\Lambda^3V^* ~:~  \rho\wedge\omega=0 \mbox{ and } \rho\wedge\Omega^{\pm}=0\right\}. 
\end{split}
\]

As in the U$(h)$ case (cf.~Section \ref{SecKahlerLA}), the space of symmetric endomorphisms decomposes into SU(3)-irreducible subspaces as follows
\[
\mathrm{Sym}(V) = \R\cdot \mathrm{Id} \oplus \mathrm{Sym}^{+}_{0} \oplus  \mathrm{Sym}^{-}. 
\]
The following isomorphisms of SU(3)-representations occur:
\[
\begin{split}
\mathrm{Sym}^{+}_{0} \rightarrow\Lambda^2_8,&\quad  A \mapsto g(AJ\cdot, \cdot), \\
\mathrm{Sym}^{-} \rightarrow \Lambda^3_{12},	&\quad  S \mapsto S\cdot\Omega^+ = \Omega^+(S\cdot,\cdot,\cdot) +\Omega^+(\cdot,S\cdot,\cdot) 
+\Omega^+(\cdot,\cdot,S\cdot). 
\end{split}
\]

The restriction of the map $\lambda_{\Omega^+}:\mathrm{End}(V)\to\Lambda^3V^*$ to $\mathrm{Sym}(V)$ 
has kernel $\mathrm{Sym}^{+}_{0}$ and image $\R\cdot \Omega^+ \oplus \Lambda^3_{12}$. We then have the following. 

\begin{lem}
The map $\i_{\Omega^+} \coloneqq \lambda_{\Omega^+}|_{\R\, \mathrm{Id} \oplus  \mathrm{Sym}^{-}}$ defines an isomorphism 
\[
\i_{\Omega^+}: \R\cdot \mathrm{Id} \oplus  \mathrm{Sym}^{-} \to \R\cdot \Omega^+ \oplus \Lambda^3_{12}, \quad \i_{\Omega^+}(A) = A\cdot\Omega^+. 
\]
\end{lem}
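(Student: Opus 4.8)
The plan is to verify that the linear map $\i_{\Omega^+}(A) = A\cdot\Omega^+$, restricted to $\R\cdot\mathrm{Id}\oplus\mathrm{Sym}^-$, is a bijection onto $\R\cdot\Omega^+\oplus\Lambda^3_{12}$, using the dimension count and the two facts already recalled in the excerpt: the restriction of $\lambda_{\Omega^+}$ to $\mathrm{Sym}(V)$ has kernel $\mathrm{Sym}^{+}_0$ and image $\R\cdot\Omega^+\oplus\Lambda^3_{12}$, and the decomposition $\mathrm{Sym}(V) = \R\cdot\mathrm{Id}\oplus\mathrm{Sym}^+_0\oplus\mathrm{Sym}^-$. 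Since $\i_{\Omega^+}$ is by definition $\lambda_{\Omega^+}$ restricted to the complementary summand $\R\cdot\mathrm{Id}\oplus\mathrm{Sym}^-$ of $\ker(\lambda_{\Omega^+}|_{\mathrm{Sym}(V)}) = \mathrm{Sym}^+_0$, it is automatically injective; and its image is the same as the image of the full restriction, namely $\R\cdot\Omega^+\oplus\Lambda^3_{12}$. So surjectivity onto the stated target is immediate, and only a consistency check on the summand-level behaviour remains.

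First I would record the pointwise computation $\mathrm{Id}\cdot\Omega^+ = 3\,\Omega^+$ (each of the three terms in $A\cdot\Omega^+$ contributes one copy), so $\i_{\Omega^+}$ maps $\R\cdot\mathrm{Id}$ isomorphically onto $\R\cdot\Omega^+$. Next, invoking the already-stated isomorphism $\mathrm{Sym}^- \to \Lambda^3_{12}$, $S\mapsto S\cdot\Omega^+$, I note that $\i_{\Omega^+}$ restricted to $\mathrm{Sym}^-$ is precisely this map, hence an isomorphism onto $\Lambda^3_{12}$. Since $\R\cdot\Omega^+$ and $\Lambda^3_{12}$ are distinct irreducible summands of $\Lambda^3V^*$, their direct sum is direct, and combining the two block isomorphisms gives that $\i_{\Omega^+}: \R\cdot\mathrm{Id}\oplus\mathrm{Sym}^- \to \R\cdot\Omega^+\oplus\Lambda^3_{12}$ is an isomorphism. (Equivalently, one can argue purely by linear algebra: $\i_{\Omega^+}$ is $\lambda_{\Omega^+}$ restricted to a complement of its kernel inside $\mathrm{Sym}(V)$, hence injective, and its image equals $\Im(\lambda_{\Omega^+}|_{\mathrm{Sym}(V)}) = \R\cdot\Omega^+\oplus\Lambda^3_{12}$, hence surjective.) $\SU(3)$-equivariance is clear since $A\mapsto A\cdot\Omega^+$ is built from the $\SU(3)$-equivariant $\lambda$-construction and $\Omega^+$ is $\SU(3)$-invariant.

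The only genuine verification needed is that $\ker(\lambda_{\Omega^+}|_{\mathrm{Sym}(V)}) = \mathrm{Sym}^+_0$ and $\Im(\lambda_{\Omega^+}|_{\mathrm{Sym}(V)}) = \R\cdot\Omega^+\oplus\Lambda^3_{12}$ — but these are exactly the assertions made in the line of the excerpt immediately preceding the lemma, so within the logic of the paper they may be taken as given; if one wanted to prove them from scratch, the cleanest route is to compute $\lambda_{\Omega^+}$ on each $\SU(3)$-irreducible summand of $\mathrm{Sym}(V)$ using a unitary frame in which $\Omega^+ = \Ree(dz^1\wedge dz^2\wedge dz^3)$, observing that $\mathrm{Sym}^+_0 \cong \mathfrak{su}(3)$ is the Lie algebra direction along which $\Omega^+$ is infinitesimally fixed (so lands in the kernel), while $\mathrm{Sym}^-$ is mapped faithfully by Schur's lemma once one checks the image is nonzero. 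The main obstacle, such as it is, is purely bookkeeping: keeping track of the scalar $3$ in $\mathrm{Id}\cdot\Omega^+ = 3\Omega^+$ and confirming that $\lambda_{\Omega^+}$ does not annihilate $\mathrm{Sym}^-$ (a single explicit nonzero value of $S\cdot\Omega^+$ for some $S\in\mathrm{Sym}^-$ suffices, after which irreducibility of $\mathrm{Sym}^-$ and $\Lambda^3_{12}$ closes the argument).
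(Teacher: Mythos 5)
Your proposal is correct and follows exactly the route the paper intends: the lemma is stated without proof, as an immediate consequence of the two facts recorded in the sentence preceding it (the kernel/image of $\lambda_{\Omega^+}|_{\mathrm{Sym}(V)}$ and the decomposition of $\mathrm{Sym}(V)$), together with the block computations $\mathrm{Id}\cdot\Omega^+=3\,\Omega^+$ and the isomorphism $\mathrm{Sym}^-\to\Lambda^3_{12}$, $S\mapsto S\cdot\Omega^+$, which you correctly identify. Nothing further is needed.
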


Finally, we recall the following well-known SU(3)-identities. 
\begin{lem}\label{l:SU3ids}
For every $\alpha\in\Lambda^1V^*$, the following identities hold 
\begin{enumerate}[i)]
\item $\star(\alpha\w\omega) = -J^*\alpha\w\omega$;
\item $\star(\alpha\wedge\Omega^\pm)\wedge\omega =\alpha\wedge\Omega^\pm = \mp J^*\alpha\wedge\Omega^\mp$;
\item $\star(\alpha\wedge\Omega^\pm)\wedge\omega^2 = 0$;
\item $\star(\alpha\wedge\Omega^-)\wedge\Omega^+ = -\star(\alpha\wedge\Omega^+)\wedge\Omega^- =  \alpha\wedge\omega^2=2\star(J^*\alpha)$;
\item $\star(\alpha\wedge\Omega^-)\wedge\Omega^- = \star(\alpha\wedge\Omega^+)\wedge\Omega^+  = -J^*\alpha\wedge\omega^2=2\star\alpha$.
\end{enumerate}
\end{lem}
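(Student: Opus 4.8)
# Proof Plan for Lemma \ref{l:SU3ids}

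The plan is to verify these five $\mathrm{SU}(3)$-identities by a direct computation in a standard $\mathrm{SU}(3)$ coframe, exploiting the fact that $\mathrm{SU}(3)$ acts transitively on the unit sphere in $V^*$. First I would fix an adapted orthonormal coframe $(e^1,\dots,e^6)$ for which
\[
\omega = e^{12}+e^{34}+e^{56},\qquad \Omega^+ = e^{135}-e^{146}-e^{236}-e^{245},\qquad \Omega^- = e^{136}+e^{145}+e^{235}-e^{246},
\]
with $J$ acting by $Je^1 = -e^2$, $Je^3 = -e^4$, $Je^5 = -e^6$ (so that $J^*e^1 = e^2$, etc., consistent with $\omega = h(J\cdot,\cdot)$ and $\Omega^- = J^*\Omega^+$). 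Since both sides of each identity are $\mathrm{SU}(3)$-equivariant linear maps in $\alpha$, and $\mathrm{SU}(3)$ acts transitively on the unit sphere of $V^*$, it suffices to check each identity for a single unit vector, say $\alpha = e^1$; the case $\alpha = 0$ is trivial, and the general case follows by rescaling and equivariance.

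The key computational steps, carried out in this coframe with $\alpha = e^1$, are: (1) compute $e^1\wedge\omega = e^{112}+e^{134}+e^{156} = e^{134}+e^{156}$ and apply $\star$ to get identity (i); (2) compute $e^1\wedge\Omega^+$ and $e^1\wedge\Omega^-$ as 4-forms, apply $\star$ to obtain the 2-forms $\star(e^1\wedge\Omega^\pm)$, then wedge these back with $\omega$, $\omega^2$, $\Omega^+$, $\Omega^-$ to read off identities (ii)--(v); (3) throughout, use $J^*e^1 = e^2$ together with the analogous formulas $e^2\wedge\Omega^\pm$, etc., to identify the right-hand sides. The normalization conventions ($2\omega^3 = 3\Omega^+\wedge\Omega^-$, $\star 1 = \tfrac{1}{6}\omega^3 = e^{123456}$) must be used consistently to fix the numerical constants; in particular $\omega^2 = 2(e^{1234}+e^{1256}+e^{3456})$ and $\omega^3 = 6\,e^{123456}$.

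I expect the main obstacle to be purely bookkeeping: keeping track of signs in the many wedge products of 3-forms and 4-forms in six variables, and making sure the sign conventions for $J$, for $\star$ (orientation), and for $\Omega^\pm$ are mutually consistent so that the stated constants ($-1$, $\mp$, $2$, etc.) come out exactly as written. A useful sanity check at each stage is to test both sides also on $\alpha = e^2 = J^*e^1$ and confirm that the $J^*$-twisted versions of the identities transform correctly into one another; another is to verify identities (iv) and (v) are consistent with $\Omega^- = \star\Omega^+$ and $\star\Omega^- = -\Omega^+$. Once the coframe computation is done for $\alpha = e^1$, equivariance closes the argument with no further analysis needed.
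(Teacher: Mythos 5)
The paper states this lemma without proof, as a collection of well-known $\SU(3)$ linear-algebra identities, so there is no internal argument to compare against; your plan --- reduce to $\alpha=e^1$ by linearity and the transitivity of $\SU(3)$ on the unit sphere of $V^*$, then compute in an adapted coframe --- is the standard way to verify it and is structurally sound. The equivariance step is fine: both sides of each identity are linear in $\alpha$ and commute with the $\SU(3)$-action, since $\star$, $J^*$ and wedging with $\omega$, $\Omega^{\pm}$ are all $\SU(3)$-equivariant.

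One concrete correction, precisely of the kind you flagged as the main risk: your stated conventions are not mutually consistent. With the paper's definitions $J^*\alpha=\alpha(J\cdot)$ and $\omega=h(J\cdot,\cdot)=e^{12}+e^{34}+e^{56}$, one has $Je_1=e_2$ on vectors, hence $J^*e^1=e^1\circ J=-e^2$, not $+e^2$. Equivalently, $e^1+\mathrm{i}e^2$ is of type $(1,0)$ and $J^*(e^1+\mathrm{i}e^2)=\mathrm{i}(e^1+\mathrm{i}e^2)$, which is exactly what makes $J^*\Omega^+=\Omega^-$ hold for your $\Omega^{\pm}$; taking $J^*e^1=+e^2$ instead forces $J^*\Omega^+=-\Omega^-$, contradicting your own normalization, and flips the sign of identities (i), (ii), (iv), (v). For instance, a direct computation gives $\star(e^1\wedge\omega)=e^{234}+e^{256}=e^2\wedge\omega$, which equals $-J^*e^1\wedge\omega$ only if $J^*e^1=-e^2$. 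With that single sign fixed, the computation goes through exactly as you describe (I checked (i)--(v) for $\alpha=e^1$ in your coframe), and your normalizations $\omega^3=6\,e^{123456}$ and $\Omega^+\wedge\Omega^-=4\,e^{123456}$ are consistent with the paper's $2\omega^3=3\,\Omega^+\wedge\Omega^-$.
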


\subsection{The decomposition of $d$}
Let $(N,h,J,\omega,\Omega)$ be a Calabi-Yau 6-manifold.  
As in the complex and in the $G_2$ case, the decompositions of the spaces of differential forms on $N$ determine decompositions of 
the exterior differential operator $d$. 
In particular, it is possible to introduce first order differential operators $d^p_q:\Lambda_p\to\Lambda_q$ akin to those introduced by 
Bryant and Harvey on torsion-free $G_2$ manifolds and obtain new formulae that are analogous to those reviewed in 
Proposition \ref{propBryantFormulae}. Here, the relevant spaces are 
\[
\Lambda_1\coloneqq \Lambda^0(N),\quad 
\Lambda_6 \coloneqq \Lambda^1(N),\quad 
\Lambda_8 \coloneqq \Lambda^2_8(N),\quad
\Lambda_{12}\coloneqq \Lambda^3_{12}(N), 
\]
so that $p,q\in\{1,6,8,12\}$.  
We will divide the discussion into various steps. 

\smallskip

First, $d^1_6=d:\Lambda_1\to\Lambda_6$ is the usual differential acting on functions and $d^6_1 = (d^1_6)^* = d^*:\Lambda_6\to\Lambda_1$ 
is the codifferential acting on $1$-forms. 

\smallskip

Let $\alpha\in\Lambda^1(N)$, we define 
\[
\begin{split}
d^6_{6}:\Lambda_6\to\Lambda_{6},&\quad d^6_6\alpha\coloneqq \star d(\alpha\w\Omega^-),\\
d^6_{8}:\Lambda_6\to\Lambda_{8},&\quad d^6_8\alpha\coloneqq \pi^2_8(d\alpha),\\
d^6_{12}:\Lambda_6\to\Lambda_{12},&\quad d^6_{12}\alpha \coloneqq \pi^3_{12}\left(d^*(\alpha\wedge\Omega^-)\right),
\end{split}
\]
We then have the following. 
\begin{lem}\label{l:d66d612}
For every $\alpha\in\Lambda^1(N)$
\begin{enumerate}[1)]
\item $d\alpha = -\frac13 d^6_1(J^*\alpha) \cdot\omega -\frac12 \star\left(J^*d^6_6\alpha\w\Omega^+ \right) + d^6_8\alpha$;
\item $d\star(\alpha\w\Omega^-) =	-\frac12\,d^6_1\alpha\cdot\Omega^+ 
						+ \frac12\,d^6_1(J^*\alpha)\cdot\Omega^- + \frac12 J^* d^6_6\alpha \w\omega + \star d^6_{12}\alpha. $
\end{enumerate}
Moreover, $d^6_1(J^*\alpha) = -\star d \star J^*\alpha = \frac12 \star d(\alpha\w\omega^2)$. 
\end{lem}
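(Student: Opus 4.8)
The plan is to prove these formulae exactly as Bryant's (Proposition~\ref{propBryantFormulae}) are proved: decompose each left-hand side into its $\SU(3)$-irreducible pieces and identify every piece with one of the operators $d^6_1 = d^*$, $d^6_6$, $d^6_8$, $d^6_{12}$ defined just above. All the bookkeeping reduces to pointwise linear algebra, governed by Lemma~\ref{l:SU3ids}, which records how wedging with the parallel forms $\omega$, $\omega^2$, $\Omega^+$, $\Omega^-$ acts on each summand of $\Lambda^2 V^*$ and $\Lambda^3 V^*$; the only global input is the torsion-free identities $d\omega = 0$ and $d\Omega^\pm = 0$, which I would use throughout to commute $d$ past these forms.

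For statement~1, I would write $d\alpha = c\,\omega + \sigma_6 + \sigma_8$ along $\Lambda^2(N) = \R\cdot\omega \oplus \Lambda^2_6 \oplus \Lambda^2_8$. By definition $\sigma_8 = \pi^2_8(d\alpha) = d^6_8\alpha$. To pin down $c$, wedge with $\omega^2$: the $\Lambda^2_6$ and $\Lambda^2_8$ pieces die (Lemma~\ref{l:SU3ids}~iii and the defining property $\sigma\wedge\omega^2 = 0$ of $\Lambda^2_8$), so $d(\alpha\wedge\omega^2) = c\,\omega^3$; rewriting $\alpha\wedge\omega^2 = 2\star(J^*\alpha)$ (Lemma~\ref{l:SU3ids}~iv) and applying $\star$ expresses $c$ through $d^*(J^*\alpha) = d^6_1(J^*\alpha)$, with the normalization $\omega^3 = 6\,\vol$ fixing the coefficient $-\tfrac13$. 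To pin down $\sigma_6$, wedge $d\alpha$ with $\Omega^-$: the $\omega$ and $\Lambda^2_8$ pieces now vanish, so $d(\alpha\wedge\Omega^-) = \sigma_6\wedge\Omega^-$, whose $\star$ is by definition $d^6_6\alpha$; writing $\sigma_6 = \star(\theta\wedge\Omega^+)$ for the unique $\theta\in\Lambda^1(N)$ and evaluating $\star(\theta\wedge\Omega^+)\wedge\Omega^-$ by Lemma~\ref{l:SU3ids}~iv, one solves $\theta = -\tfrac12 J^* d^6_6\alpha$, i.e.\ $\sigma_6 = -\tfrac12\star(J^* d^6_6\alpha\wedge\Omega^+)$.

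For statement~2 I would run the same scheme on the $3$-form $d\mu$, where $\mu \coloneqq \star(\alpha\wedge\Omega^-)$; note $\mu$ is itself a $2$-form, lying in $\Lambda^2_6$ by Lemma~\ref{l:SU3ids}~ii, so $d\mu$ decomposes along $\Lambda^3(N) = \R\cdot\Omega^+ \oplus \R\cdot\Omega^- \oplus \Lambda^3_6 \oplus \Lambda^3_{12}$ as $a\,\Omega^+ + b\,\Omega^- + \theta\wedge\omega + \rho$ with $\theta\in\Lambda^1(N)$, $\rho\in\Lambda^3_{12}$. Wedging $d\mu$ with $\Omega^-$ (resp.\ $\Omega^+$) kills the $\Lambda^3_6$, $\Lambda^3_{12}$ pieces and one of $\Omega^\pm\wedge\Omega^\pm = 0$, so $d(\mu\wedge\Omega^-)$ (resp.\ $d(\mu\wedge\Omega^+)$) isolates $a$ (resp.\ $b$); by Lemma~\ref{l:SU3ids}~iv--v these wedge products equal $2\star\alpha$ and $\alpha\wedge\omega^2$, and feeding in statement~1 gives $a = -\tfrac12 d^6_1\alpha$, $b = \tfrac12 d^6_1(J^*\alpha)$. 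Wedging $d\mu$ with $\omega$ kills $\Omega^\pm$ and $\Lambda^3_{12}$, so $d(\mu\wedge\omega) = \theta\wedge\omega^2 = 2\star J^*\theta$; since $\mu\wedge\omega = \alpha\wedge\Omega^-$ (Lemma~\ref{l:SU3ids}~ii) the left side is $d\alpha\wedge\Omega^- = \sigma_6\wedge\Omega^-$, so comparison with the computation of $\sigma_6$ gives $\theta = \tfrac12 J^* d^6_6\alpha$. Finally $\rho = \pi^3_{12}(d\mu)$ is matched with $\star d^6_{12}\alpha$ by rewriting $d\mu = d\star(\alpha\wedge\Omega^-)$ through $\star d^*(\alpha\wedge\Omega^-)$ (using $d^*$ expressed via $\star d\star$ on $4$-forms and $\star^2 = -1$ on $3$-forms) and invoking the $\SU(3)$-equivariance of $\star$, which preserves $\Lambda^3_{12}$, together with the definition $d^6_{12}\alpha = \pi^3_{12}(d^*(\alpha\wedge\Omega^-))$. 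The ``Moreover'' then falls out of $d^6_1 = d^*$, the Hodge expression of $d^*$ on $1$-forms, Lemma~\ref{l:SU3ids}~iv (which writes $\star(J^*\alpha)$ as a multiple of $\alpha\wedge\omega^2$), and $d\omega = 0$.

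Conceptually this is entirely routine. The hard part will be purely clerical: keeping the factors of $J^*$, the Hodge stars on the various degrees, and the numerical constants consistent — this depends on the precise normalizations fixed in the $\SU(3)$ linear algebra (e.g.\ $2\omega^3 = 3\,\Omega^+\wedge\Omega^-$, $\alpha\wedge\omega^2 = 2\star J^*\alpha$, $\star(\alpha\wedge\Omega^+)\wedge\Omega^+ = 2\star\alpha$) and on the sign conventions for $\star$ and $d^*$. I would nail down every constant once by evaluating both sides in a fixed orthonormal $\SU(3)$-coframe, as in the proof of Lemma~\ref{l:G2injectivity}, after which the general identities follow by equivariance.
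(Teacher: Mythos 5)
Your proposal is correct and follows essentially the same route as the paper: decompose $d\alpha$ and $d\star(\alpha\wedge\Omega^-)$ into $\SU(3)$-types, wedge with $\omega^2$, $\Omega^+$, $\Omega^-$ and $\omega$ to isolate each coefficient via Lemma \ref{l:SU3ids}, and read off the $\Lambda^2_8$ and $\Lambda^3_{12}$ components directly from the definitions of $d^6_8$ and $d^6_{12}$. The only cosmetic difference is that the paper extracts $h^\pm$ directly from $d(\mu\wedge\Omega^\mp)=2\,d\star\alpha$ and $2\,d\star(J^*\alpha)$ rather than ``feeding in statement 1,'' but the computation is the same.
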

\begin{proof}
\begin{enumerate}[1)]
\item The $2$-form $d\alpha$ decomposes according to the splitting of $\Lambda^2(N)$ as
\[
d\alpha = a\cdot \omega + \star(\alpha'\wedge\Omega^+) + \sigma,
\]
where $a\in \Lambda^0(N)$, $\alpha'\in\Lambda^1(N)$ and $\sigma\in\Lambda^2_8(N)$. 
By definition, $\sigma = \pi^2_8(d\alpha) = d^6_8\alpha$. 
Wedging the expression of $d\alpha$ by $\omega^2$, we obtain
\[
d\alpha\w\omega^2 = a\cdot \omega^3 = 6a \cdot \vol_h. 
\]
On the other hand, by Lemma \ref{l:SU3ids} 
\[
d\alpha\w\omega^2 = d(\alpha\w\omega^2) = 2 d\star(J^*\alpha), 
\]
and thus
\[
a = \frac13 \star d\star(J^*\alpha) = -\frac13 d^*(J^*\alpha). 
\]
Wedging now the expression of $d\alpha$ by $\Omega^-$ and using Lemma \ref{l:SU3ids}, we have
\[
d\alpha\w\Omega^- =  \star(\alpha'\wedge\Omega^+)\w\Omega^- = -2\star(J^*\alpha'). 
\]
Therefore $d^6_6\alpha = \star d(\alpha\w\Omega^-) = 2 J^*\alpha'$, whence
\[
\alpha' = -\frac12 J^*d^6_6\alpha. 
\]
\item The $3$-form $d\star(\alpha\w\Omega^-)$ decomposes according to the decomposition of $\Lambda^3(N)$ as follows
\[
d\star(\alpha\w\Omega^-) =	h^+\cdot\Omega^+ 
						+ h^-\cdot\Omega^- + \alpha_1 \w\omega + \rho,
\]
for $h^\pm\in \Lambda^0(N)$, $\alpha_1\in\Lambda^1(N)$ and $\rho\in\Lambda_{12}$. 
By definition, $\star \rho = \pi^3_{12}(\star d \star(\alpha\w\Omega^-)) = - d^6_{12}\alpha$, thus $\rho = \star d^6_{12}\alpha$. 
Wedging the expression of $d\star(\alpha\w\Omega^-)$ by $\Omega^+$ we obtain
\[
d\star(\alpha\w\Omega^-)\w\Omega^+ = h^-\,\Omega^-\w\Omega^+ = -4\,h^-\vol_h.
\]
Using Lemma \ref{l:SU3ids} we also have
\[
d\star(\alpha\w\Omega^-)\w\Omega^+ = d(\star(\alpha\w\Omega^-)\w\Omega^+) = d(2\star J^*\alpha).
\]
Therefore
\[
h^- = -\frac12 \star d \star (J^*\alpha) = \frac12 d^*(J^*\alpha) = \frac12 d^6_1(J^*\alpha). 
\]
Similarly, wedging the expression of $d\star(\alpha\w\Omega^-)$ by $\Omega^-$ we obtain 
\[
h^+ = \frac14\star \left(d\star(\alpha\w\Omega^-)\w\Omega^-\right) = \frac12 \star d \star \alpha = -\frac12 d^6_1\alpha. 
\]
Finally, wedging the expression of $d\star(\alpha\w\Omega^-)$ by $\omega$ and using Lemma \ref{l:SU3ids} we get
\[
\alpha_1 \w\omega^2 = d\star(\alpha\w\Omega^-)\w\omega = d\left(\star(\alpha\w\Omega^-)\w\omega\right) 
					= d(\alpha\w\Omega^-) = -\star d^6_6\alpha,
\]
and $\alpha_1 \w\omega^2 = 2\star(J^*\alpha_1)$, so that $J^*\alpha_1 = -\frac12 d^6_6\alpha$ and thus
\[
\alpha_1 = \frac12 J^*d^6_6\alpha. 
\]
\end{enumerate}
\end{proof}

\smallskip

Consider now $\beta\in\Lambda_8$. Then, $d^*\beta\in\Lambda_6$ and we  define
\[
d^8_6:\Lambda_8\to\Lambda_6,\quad 
d^8_6\beta \coloneqq d^*\beta = - \star d \star \beta = \star d (\beta\wedge\omega) = \star(d\beta\wedge\omega), 
\]
where we used the identity $\beta\wedge\omega = -\star\beta$. 
We also define $d^{8}_{12}$ as the projection of $d\beta$ onto $\Lambda^3_{12}$:
\[
d^{8}_{12}:\Lambda_8\to\Lambda_{12},\quad d^{8}_{12}\beta \coloneqq \pi^3_{12}(d\beta). 
\]
We then have the next. 
\begin{lem}\label{l:d86d812}	
Let $\beta\in\Lambda_8$, then
\[
\begin{split}
d\beta 		&= \frac12 J^* d^8_6\beta \wedge \omega + d^{8}_{12}\beta,\\
d\star\beta	&= \star d^8_6\beta. 
\end{split}
\]
\end{lem}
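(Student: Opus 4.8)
The plan is to follow exactly the template used in the proof of Lemma~\ref{l:d66d612}: decompose $d\beta$ into $\SU(3)$-types and pin down each component by wedging with $\omega$, $\Omega^+$ and $\Omega^-$, using the parallelism $d\omega=0$, $d\Omega^\pm=0$ to turn these wedge products into exact forms, which can then be re-expressed via $d^8_6\beta=d^*\beta$. I would dispatch the second identity first, since it is essentially a restatement of the definition: from $d^8_6\beta=d^*\beta=-\star d\star\beta$, applying $\star$ to both sides and using that $\star\star=-\mathrm{Id}$ on $5$-forms of an oriented Riemannian $6$-manifold gives $\star d^8_6\beta=-\star\star d\star\beta=d\star\beta$.

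For the first identity, write
\[
d\beta = p^+\,\Omega^+ + p^-\,\Omega^- + \alpha_1\wedge\omega + \rho
\]
according to $\Lambda^3(N)=\R\,\Omega^+\oplus\R\,\Omega^-\oplus\Lambda^3_6\oplus\Lambda^3_{12}$, with $p^\pm\in\Lambda^0(N)$, $\alpha_1\in\Lambda^1(N)$, and $\rho=\pi^3_{12}(d\beta)=d^8_{12}\beta$ by definition; note that $\alpha_1$ is uniquely determined since $L_\omega\colon\Lambda^1\to\Lambda^3$ is injective on a $6$-manifold. The algebraic inputs are that $\beta\in\Lambda^2_8(N)\subset\Lambda^{1,1}(N)$ satisfies $\beta\wedge\Omega^\pm=0$ (on a threefold $(1,1)\wedge(3,0)=0=(1,1)\wedge(0,3)$) and $\beta\wedge\omega=-\star\beta$, the identity recorded just before the statement. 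Wedging $d\beta$ with $\Omega^-$ and using $d\Omega^-=0$ gives $d\beta\wedge\Omega^-=d(\beta\wedge\Omega^-)=0$; on the right-hand side only $p^+\,\Omega^+\wedge\Omega^-$ survives, since $\Omega^-\wedge\Omega^-=0$, $\omega\wedge\Omega^-=0$ and $\rho\wedge\Omega^-=0$ by the definition of $\Lambda^3_{12}$, and as $\Omega^+\wedge\Omega^-\neq0$ this forces $p^+=0$; symmetrically, wedging with $\Omega^+$ gives $p^-=0$. Wedging $d\beta$ with $\omega$ and using $d\omega=0$, $\beta\wedge\omega=-\star\beta$ gives $d\beta\wedge\omega=d(\beta\wedge\omega)=-d\star\beta=-\star d^8_6\beta$ by the second identity, while on the other side only $\alpha_1\wedge\omega^2$ survives (as $\Omega^\pm\wedge\omega=0$, $\rho\wedge\omega=0$), and $\alpha_1\wedge\omega^2=2\star(J^*\alpha_1)$ by Lemma~\ref{l:SU3ids}. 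Comparing and using $(J^*)^2=-\mathrm{Id}$ on $1$-forms yields $\alpha_1=\tfrac12 J^*d^8_6\beta$. Substituting $p^\pm=0$, $\alpha_1=\tfrac12 J^*d^8_6\beta$ and $\rho=d^8_{12}\beta$ gives the stated formula.

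There is no genuine obstacle: the content is $\SU(3)$ linear algebra together with integration-free manipulation, and the work is bookkeeping of the $\star$ and $J^*$ conventions and checking that all ``off-type'' wedge products vanish — precisely what Lemma~\ref{l:SU3ids} and the definition of $\Lambda^3_{12}$ supply. The mildly delicate point is the identity $\beta\wedge\Omega^\pm=0$ for $\beta\in\Lambda^2_8$; if one prefers to avoid the type argument, it follows from $\Lambda^2_8=\{\sigma:J^*\sigma=\sigma,\ \sigma\wedge\omega^2=0\}$ together with the Hermitian Lefschetz identities, since a $(1,1)$-form on a $6$-dimensional Hermitian space is annihilated by $\Omega^\pm$ for dimensional reasons.
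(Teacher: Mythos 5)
Your proof is correct and follows essentially the same route as the paper: the second identity is definitional (from $d^8_6\beta = d^*\beta = -\star d\star\beta$), and the first is obtained by decomposing $d\beta$ in $\Lambda^3(N)$, killing the $\Omega^\pm$-components via $\beta\wedge\Omega^\pm=0$, and extracting $\alpha_1$ by wedging with $\omega$ and using $\beta\wedge\omega=-\star\beta$ together with $\alpha_1\wedge\omega^2=2\star(J^*\alpha_1)$. The extra details you supply (injectivity of $L_\omega$, the type-decomposition reason for $\beta\wedge\Omega^\pm=0$) are accurate but left implicit in the paper.
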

\begin{proof}
The differential of $\beta$ decomposes according to the splitting of $\Lambda^3(N)$. 
Since $\beta\wedge\Omega^\pm=0$, we see that $d\beta\wedge\Omega^\pm=0$ and thus $d\beta$ has no components along $\Omega^\pm$. 
Therefore
\[
d\beta = \alpha_1 \wedge \omega + d^{8}_{12}\beta,
\]
for a certain 1-form $\alpha_1$. Using again the identity $\beta\wedge\omega=-\star\beta$, we have
\[
- d\star\beta = d\beta\wedge\omega = \alpha_1 \wedge \omega^2 = 2\star(J^*\alpha_1),
\] 
whence it follows that 
\[
\alpha_1 = \frac12 J^* d^*\beta = \frac12 J^* d^8_6\beta. 
\]
\end{proof}

Let us now consider a $3$-form $\rho\in\Lambda_{12}$. 
The differential operator $d^{12}_6:\Lambda_{12}\to\Lambda_6$ is defined as the formal adjoint of the operator $d^6_{12}$,  
so for every $\alpha\in\Lambda^1_c(N)$
\[
\begin{split}
\int_N (d^{12}_6\rho,\alpha)\vol_h	&=	\int_N (\rho,d^6_{12}\alpha) \vol_h	=	\int_N (\rho, d\star(\alpha\wedge\Omega^-)) \vol_h \\
							&=	\int_N (d^*\rho, \star(\alpha\wedge\Omega^-)) \vol_h 	
							=	\int_N (-\star(d^*\rho\wedge\Omega^-),\alpha) \vol_h.		
\end{split}
\]
Therefore, 
\begin{equation}\label{d126}
d^{12}_6\rho = -\star(d^*\rho\wedge\Omega^-) = \star(\star d \star\rho\wedge\Omega^-). 
\end{equation}
The 4-forms $d\rho$ and $d\star\rho$ decompose according to the splitting
\[
\Lambda^4(N) = \Lambda^0(N)\cdot\omega^2 \oplus \Lambda^1(N)\wedge\Omega^+ \oplus \star \Lambda^2_8(N). 
\]
Since $\rho\wedge\omega=0$, we have
\begin{equation}\label{e:drho}
\begin{split}
d\rho		&= \alpha_1\wedge\Omega^+ + \star \sigma,	\\
d\star\rho	&= \tilde{\alpha}_1\wedge\Omega^+ + \star\tilde\sigma,
\end{split}
\end{equation}
for certain $\alpha_1,\tilde{\alpha}_1\in\Lambda^1$ and $\sigma,\tilde{\sigma}\in\Lambda^2_8(N)$. From \eqref{d126} we obtain
\[
d^{12}_6\rho 	= \star(\star d \star\rho\wedge\Omega^-) = \star(\star (\tilde{\alpha}_1\wedge\Omega^+) \wedge\Omega^-) 
			= \star(-2\star J^*\tilde{\alpha}_1) = 2 J^*\tilde{\alpha}_1,
\]
and thus
\[
d\star\rho	= -\frac12 J^*d^{12}_6\rho\wedge\Omega^+ + \star\tilde\sigma.
\]

We now define 
\[
d^{12}_8\rho \coloneqq \pi^2_8(d^* \rho). 
\]
Then, for every $\beta\in\Lambda^2(N)$ with compact support we have
\[
\int_N(d^{12}_8\rho,\beta)\vol_h = \int_N(d^*\rho,\beta)\vol_h = \int_N(\rho,d\beta)\vol_h = \int_N(\rho,d^8_{12}\beta)\vol_h,
\]
whence it follows that $d^{12}_8$ is  the formal adjoint  of $d^8_{12}$.

Since $\star\rho\in\Lambda_{12}$, we also have
\[
d^{12}_8(\star\rho) = \pi^2_8(d^* \star\rho) = \pi^2_8(\star d \rho) =\sigma. 
\]
Therefore, \eqref{e:drho} becomes
\[
\begin{split}
d\rho		&= \alpha_1 \wedge\Omega^+ + \star d^{12}_8(\star\rho),	\\
d\star\rho	&= -\frac12 J^*d^{12}_6\rho\wedge\Omega^+ - \star d^{12}_8\rho. 
\end{split}
\]

The next lemma establishes the link between  $d^{12}_6\rho$ and the 1-form $\alpha_1$ appearing above. 
\begin{lem}
$\alpha_1 = -J^*{\tilde{\alpha}_1} = -\frac12\, d^{12}_6\rho$.
\end{lem}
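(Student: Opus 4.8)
The plan is to deduce the identity by applying the decomposition formula for $d\star(\cdot)$ — just established for an arbitrary $3$-form in $\Lambda_{12}$ — to the Hodge dual $\star\rho$, which again lies in $\Lambda_{12}$, and then to match $\Lambda^1\wedge\Omega^+$-components. First I would record the two facts already in hand: the identification $\tilde\alpha_1 = -\tfrac12 J^* d^{12}_6\rho$ (equivalently $d^{12}_6\rho = 2J^*\tilde\alpha_1$), obtained above, and the elementary fact that on $3$-forms in dimension $6$ one has $\star\star = -\mathrm{id}$, so $\star(\star\rho) = -\rho$ and hence $d\star(\star\rho) = -d\rho = -\alpha_1\wedge\Omega^+ - \star d^{12}_8(\star\rho)$. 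Applying the $d\star(\cdot)$ formula to $\mu := \star\rho$ gives $d\star(\star\rho) = -\tfrac12 J^* d^{12}_6(\star\rho)\wedge\Omega^+ - \star d^{12}_8(\star\rho)$; comparing the two expressions and using that $\gamma\mapsto\gamma\wedge\Omega^+$ is injective on $1$-forms yields $\alpha_1 = \tfrac12 J^* d^{12}_6(\star\rho)$, so everything is reduced to comparing $d^{12}_6(\star\rho)$ with $d^{12}_6\rho$, equivalently $\alpha_1$ with $\tilde\alpha_1$.

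To carry out this comparison I would mimic the computation that produced $d^{12}_6\rho = 2J^*\tilde\alpha_1$: from \eqref{d126}, $d^{12}_6(\star\rho) = -\star(d^*(\star\rho)\wedge\Omega^-)$; using the dimension-$6$ Hodge identity together with $\star\star\rho=-\rho$ one has $d^*(\star\rho) = \star d\rho = \star(\alpha_1\wedge\Omega^+) + d^{12}_8(\star\rho)$; the term $d^{12}_8(\star\rho)\wedge\Omega^-$ vanishes (a primitive $(1,1)$-form wedged with $\Omega^-$), and Lemma \ref{l:SU3ids}(iv) gives $\star(\alpha_1\wedge\Omega^+)\wedge\Omega^- = -2\star(J^*\alpha_1)$; combined with $\star\star = -\mathrm{id}$ on $1$-forms this gives $d^{12}_6(\star\rho) = -2J^*\alpha_1$. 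Thus the lemma amounts to the single pointwise assertion $\alpha_1 = -J^*\tilde\alpha_1$, i.e.\ that the $1$-form in $d\rho$ and the one in $d\star\rho$ are exchanged by $J^*$ (up to sign). This last point is the only one that is not purely formal, and I would establish it pointwise by using that on $\Lambda^3_{12}$ the Hodge star acts as $J^*$ (a standard fact of $\SU(3)$-representation theory): writing $\rho = \rho^{2,1}+\rho^{1,2}$ with $\rho^{1,2}=\overline{\rho^{2,1}}$, one has $\star\rho = J^*\rho$, so the $(3,1)$- and $(1,3)$-components of $d\star\rho$ are $\mathrm{i}$ and $-\mathrm{i}$ times those of $d\rho$; translating through the isomorphism $\gamma^{0,1}\mapsto\gamma^{0,1}\wedge\Omega$ and the action of $J^*$ on $\Lambda^{1,0}$ and $\Lambda^{0,1}$ identifies $\alpha_1$ with $\tilde\alpha_1$ as claimed, whence $\alpha_1 = \tfrac12 J^* d^{12}_6(\star\rho) = -\tfrac12 d^{12}_6\rho$.

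The main obstacle is precisely this last, genuinely geometric, step: every formal manipulation with $\star$, $J^*$ and $\Omega^\pm$ turns out to be logically equivalent to the assertion itself (because $d$ and $d^c$ do not agree), so one cannot avoid a pointwise/type computation; and the numerous signs — from $\star^2$ in degrees $1,2,3,5$, from $(J^*)^2$ on forms of various parity, and from the orientation and the normalization fixed for $\Omega^\pm$ — have to be tracked with care to land on exactly the stated relation $\alpha_1 = -J^*\tilde\alpha_1 = -\tfrac12\, d^{12}_6\rho$.
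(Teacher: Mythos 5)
Your argument takes a genuinely different route from the paper's, and as written it has a gap that is not repaired. The paper does not stay in six dimensions: it regards $\rho\in\Lambda^3_{12}(N)$ as an element of $\Lambda^3_{27}(M)$ on the product $G_2$-manifold $M=N\times\R$, with $\ph=\omega\wedge dt+\Omega^+$ and $\psi=\tfrac12\omega^2+\Omega^-\wedge dt$, applies the identity \eqref{DifId327} (all operators taken on $M$), and evaluates the two sides via Lemma \ref{l:SU3ids}, obtaining $-2\star J^*\tilde\alpha_1\wedge dt$ on one side and $2\star\alpha_1\wedge dt$ on the other. The external input is therefore \eqref{DifId327}, which was derived from Bryant's exterior-derivative formulas, i.e.\ ultimately from $d^p_q=(d^q_p)^*$ and the $G_2$ identities \eqref{G2id1form}. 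That is exactly the piece of information your six-dimensional manipulations cannot reach: as you yourself observe, combining $\alpha_1=\tfrac12 J^*d^{12}_6(\star\rho)$ (from applying the $d\star(\cdot)$ formula of Lemma \ref{l:d312} to $\star\rho$) with $d^{12}_6(\star\rho)=-2J^*\alpha_1$ (from \eqref{d126}) returns the tautology $\alpha_1=\alpha_1$. Everything is then deferred to the final ``pointwise type computation'', which is only sketched --- and that computation is not a smaller lemma feeding into the statement, it \emph{is} the statement. As it stands the proposal is therefore not a proof: the one step carrying the content is the one that is not carried out.

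Moreover, the sketched step does not obviously land where you want it to. Executing it with the conventions in force ($J^*\beta=i^{p-q}\beta$ on $(p,q)$-forms, $\star\rho=J^*\rho$ on $\Lambda^3_{12}$, $\Omega=\Omega^++i\Omega^-\in\Lambda^{3,0}$): writing $\rho=\rho^{2,1}+\overline{\rho^{2,1}}$, the $(3,1)$-components of $d\rho$ and of $d\star\rho=d\bigl(i\rho^{2,1}-i\,\overline{\rho^{2,1}}\bigr)$ are $\partial\rho^{2,1}=\tfrac12\alpha_1^{0,1}\wedge\Omega$ and $i\,\partial\rho^{2,1}=\tfrac12\tilde\alpha_1^{0,1}\wedge\Omega$ respectively, whence $\tilde\alpha_1^{0,1}=i\,\alpha_1^{0,1}$ and $(J^*\tilde\alpha_1)^{0,1}=-i\,\tilde\alpha_1^{0,1}=\alpha_1^{0,1}$, i.e.\ $J^*\tilde\alpha_1=+\alpha_1$ --- the \emph{opposite} sign to the one claimed. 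A flat-model check points the same way: for $\rho=f\,\Ree(\theta^1\wedge\theta^2\wedge\bar\theta^3)$ with $\theta^k=e^{2k-1}+ie^{2k}$ and $df=a\,e^5+b\,e^6$, one finds $\alpha_1=-a\,e^5+b\,e^6$ and $\tilde\alpha_1=-b\,e^5-a\,e^6$, so that $J^*\tilde\alpha_1=\alpha_1$. So before the type-decomposition route can be accepted, its signs must be reconciled --- either against the stated conventions for $\star$, $J^*$ and $\Omega^\pm$, or against the normalization of \eqref{DifId327} itself. This only reinforces the point: the ``single geometric step'' in your outline is precisely where the seven-dimensional identity is doing its work, and it cannot be waved through.
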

\begin{proof}
Let us consider the 7-manifold $M\coloneqq N\times \R$ endowed with the torsion-free G$_2$-structure 
induced by the Calabi-Yau SU(3)-structure on $N$: 
\[
\ph = \omega \wedge dt + \Omega^+,\qquad \psi = \frac12\omega^2 + \Omega^-\wedge dt. 
\]
We will denote the Hodge operator in dimension $k\in\{6,7\}$ by $\star_k$ and the codifferential by $\delta_k$. 

Since $\rho\in\Lambda^3_{12}(N)$, we see that $\rho\wedge\ph=0$ and $\rho\wedge\psi = 0$, 
so that $\rho \in \Lambda^3_{27}(M)$. 
In particular, it satisfies the identity \eqref{DifId327}
\[
\delta_7\rho\wedge\psi = \ph \wedge  \star_7 d\rho. 
\]
Let us compute the LHS and the RHS separately. We have
\[
\delta_7\rho = -\star_7 d \star_7\rho = -\star_7 d (\star_6\rho\wedge dt) = - \star_7(d\star_6\rho\wedge dt) = \star_6d\star_6\rho = -\delta_6\rho,
\]
and thus
\[
\begin{split}
\delta_7\rho\wedge \psi	&=	\star_6 d \star_6 \rho \wedge \left(\frac12\omega^2 + \Omega^-\wedge dt\right)\\
				&=	\left(\star_6(\tilde{\alpha}_1\wedge\Omega^+) + \tilde\sigma\right) \wedge  \left(\frac12\omega^2 + \Omega^-\wedge dt\right)\\
				&=	\star_6(\tilde{\alpha}_1\wedge\Omega^+)\wedge \Omega^-\wedge dt \\
				&=	\left(-2\star_6 J^*\tilde{\alpha}_1\right)\wedge dt. 
\end{split}
\]
On the other hand
\[
\star_7 d\rho = \star_6 d\rho \wedge dt = \left(\star_6(\alpha_1\wedge\Omega^+) +  \sigma\right)\wedge dt,
\]
so that
\[
\begin{split}
\ph \wedge  \star_7 d\rho	&=	\left(\omega\wedge dt + \Omega^+\right) \wedge  \left(\star_6(\alpha_1\wedge\Omega^+) +  \sigma\right)\wedge dt\\
					&=	\star_6(\alpha_1\wedge\Omega^+) \wedge \Omega^+ \wedge dt\\
					&=	2\star_6\alpha_1 \wedge dt. 
\end{split}
\]
The thesis then follows. 
\end{proof}
Summing up, we have the following.
\begin{lem}\label{l:d312}
Let $\rho\in\Lambda_{12}$, then
\[
\begin{split}
d\rho		&= -\frac12 d^{12}_6\rho \wedge\Omega^+ + \star d^{12}_8(\star\rho),	\\
d\star\rho	&= -\frac12 J^*d^{12}_6\rho\wedge\Omega^+ - \star d^{12}_8\rho. 
\end{split}
\]
\end{lem}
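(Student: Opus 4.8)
The plan is to assemble the statement from the type computations already carried out above, rather than to argue from scratch. First I would record the consequences of $\rho\in\Lambda_{12}$: it forces $\rho\wedge\omega=0$, and the same holds for $\star\rho$, so (since $d\omega=0$) we obtain $d\rho\wedge\omega=0$ and $d\star\rho\wedge\omega=0$. In the decomposition $\Lambda^4(N)=\Lambda^0(N)\cdot\omega^2\oplus\Lambda^1(N)\wedge\Omega^+\oplus\star\Lambda^2_8(N)$ this annihilates the $\omega^2$-component, which is exactly the content of \eqref{e:drho}: one has $d\rho=\alpha_1\wedge\Omega^+ + \star\sigma$ and $d\star\rho=\tilde\alpha_1\wedge\Omega^+ + \star\tilde\sigma$ for suitable $\alpha_1,\tilde\alpha_1\in\Lambda^1(N)$ and $\sigma,\tilde\sigma\in\Lambda^2_8(N)$.

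Next I would pin down the $\Lambda^1(N)\wedge\Omega^+$-components. Wedging the expression for $d\star\rho$ with $\Omega^-$ and invoking Lemma \ref{l:SU3ids} isolates $\tilde\alpha_1$; comparing with \eqref{d126} yields $d^{12}_6\rho=2J^*\tilde\alpha_1$, hence $\tilde\alpha_1=-\tfrac12 J^*d^{12}_6\rho$ (using $(J^*)^2=-\mathrm{id}$ on $1$-forms). For $\alpha_1$ I would simply cite the preceding lemma, whose proof lifts everything to the $G_2$-manifold $M=N\times\R$ and applies the identity \eqref{DifId327} valid for $\rho$ regarded as an element of $\Lambda^3_{27}(M)$; this gives $\alpha_1=-J^*\tilde\alpha_1=-\tfrac12 d^{12}_6\rho$.

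Finally I would identify $\sigma$ and $\tilde\sigma$ by applying $\star$ to both lines of \eqref{e:drho} and projecting onto $\Lambda^2_8(N)$. Using that $\star\star$ is the identity on $2$-forms in dimension $6$, together with $d^*(\star\rho)=\star d\rho$ and $d^*\rho=-\star d\star\rho$, and that $\star(\Lambda^1(N)\wedge\Omega^+)=\Lambda^2_6(N)$ is orthogonal to $\Lambda^2_8(N)$, one reads off $\sigma=\pi^2_8(\star d\rho)=\pi^2_8(d^*(\star\rho))=d^{12}_8(\star\rho)$ and $\tilde\sigma=-\pi^2_8(d^*\rho)=-d^{12}_8\rho$. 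Substituting the four quantities $\alpha_1,\tilde\alpha_1,\sigma,\tilde\sigma$ back into \eqref{e:drho} produces the two displayed formulae.

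The main obstacle here is not conceptual but a matter of careful bookkeeping: one has to keep the Hodge-star signs in dimension $6$ and the $J^*$-conventions consistent throughout so that the coefficients $-\tfrac12$ emerge correctly. The one genuinely non-trivial ingredient — the identification of the $\Omega^+$-component $\alpha_1$ of $d\rho$, which cannot be obtained from a pointwise manipulation of $\rho$ alone — has already been settled via the $G_2$-lifting argument in the preceding lemma, so at this stage it merely needs to be quoted.
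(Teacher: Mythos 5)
Your proposal is correct and takes essentially the same route as the paper: there, Lemma \ref{l:d312} is literally the ``summing up'' of the computations immediately preceding it — killing the $\omega^2$-component of $d\rho$ and $d\star\rho$ via $\rho\wedge\omega=0$, identifying $\tilde\alpha_1$ from \eqref{d126}, quoting the $G_2$-lift lemma for $\alpha_1=-J^*\tilde\alpha_1$, and reading off $\sigma,\tilde\sigma$ from $d^{12}_8$ — which is exactly what you reassemble, with the sign and $\star\star$ bookkeeping handled correctly. One cosmetic point: ``wedging the expression for $d\star\rho$ with $\Omega^-$'' is, taken literally, a $4$-form wedged with a $3$-form in dimension $6$ and hence zero; the operation you actually perform is $\star\bigl(\star d\star\rho\wedge\Omega^-\bigr)$ as in \eqref{d126}, which your subsequent appeal to that formula makes clear.
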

 
\begin{remark}
Recall that every $\rho\in\Lambda^3_{12}(N)$ satisfies the identity $\star\rho=J^*\rho$. 
\end{remark}

As in the $G_2$ case, the condition $d^2=0$ is equivalent to second order identities on the operators $d^p_q$. We will need the next one. 
\begin{lem}\label{l:d2su3}
The following second order identity holds
\[
d^6_8 J^* d^6_6 + 2\, d^{12}_8 d^6_{12} =0.
\]
\end{lem}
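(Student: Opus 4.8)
The plan is to obtain this identity as one of the second-order relations encoded in $d^2=0$, in exactly the same spirit as the $G_2$ identity \eqref{e:d2id} was derived from $d^2\star(\alpha\w\ph)=0$. Concretely, I would apply $d$ to the expression for $d\star(\alpha\w\Omega^-)$ furnished by Lemma \ref{l:d66d612}, use $d^2=0$ to kill the left-hand side, and then extract the $\Lambda^2_8$-type component of the resulting $4$-form identity.

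First I would differentiate
\[
d\star(\alpha\w\Omega^-) = -\tfrac12\,d^6_1\alpha\cdot\Omega^+ + \tfrac12\,d^6_1(J^*\alpha)\cdot\Omega^- + \tfrac12\, J^* d^6_6\alpha \w\omega + \star d^6_{12}\alpha,
\]
whose left-hand side is $d$-exact, so that $d$ of the right-hand side vanishes. Since the Calabi-Yau structure is torsion-free, $\Omega^+$, $\Omega^-$ and $\omega$ are closed, so $d$ passes onto the coefficients and one is left with a $4$-form identity. The strategy is to project it onto the last summand of the decomposition $\Lambda^4(N)=\Lambda^0(N)\cdot\omega^2\oplus\Lambda^1(N)\wedge\Omega^+\oplus\star\Lambda^2_8(N)$. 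Term by term: $d(d^6_1\alpha\cdot\Omega^+)=d(d^6_1\alpha)\wedge\Omega^+$ and $d(d^6_1(J^*\alpha)\cdot\Omega^-)=d(d^6_1(J^*\alpha))\wedge\Omega^-$ both lie in $\Lambda^1(N)\wedge\Omega^+$ — for the second, using $\beta\wedge\Omega^-=J^*\beta\wedge\Omega^+$ from Lemma \ref{l:SU3ids} — hence contribute nothing to the $\star\Lambda^2_8(N)$-component. For the third term, $d(J^*d^6_6\alpha\wedge\omega)=d(J^*d^6_6\alpha)\wedge\omega$; decomposing the $2$-form $d(J^*d^6_6\alpha)$ by Lemma \ref{l:d66d612}(1), its $\R\cdot\omega$-part wedged with $\omega$ is a multiple of $\omega^2$, its $\Lambda^2_6$-part wedged with $\omega$ lies in $\Lambda^1(N)\wedge\Omega^+$ by Lemma \ref{l:SU3ids}, and its $\Lambda^2_8$-part $d^6_8(J^*d^6_6\alpha)$ wedged with $\omega$ equals $-\star d^6_8(J^*d^6_6\alpha)$ since $\sigma\wedge\omega=-\star\sigma$ for $\sigma\in\Lambda^2_8$. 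For the fourth term, $d(\star d^6_{12}\alpha)=d\star\rho$ with $\rho=d^6_{12}\alpha\in\Lambda_{12}$, so by Lemma \ref{l:d312} its $\star\Lambda^2_8(N)$-component is $-\star d^{12}_8 d^6_{12}\alpha$.

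Collecting the $\star\Lambda^2_8(N)$-components with the coefficients appearing in the displayed formula yields $0=-\tfrac12\star d^6_8(J^*d^6_6\alpha)-\star d^{12}_8 d^6_{12}\alpha$; since $\star$ is injective, multiplying by $-2$ gives exactly $d^6_8 J^* d^6_6\alpha+2\,d^{12}_8 d^6_{12}\alpha=0$. The computation is essentially routine; the only point that requires genuine care is the bookkeeping of the $J^*$'s together with the verification that $\Lambda^1(N)\wedge\Omega^-$ coincides with $\Lambda^1(N)\wedge\Omega^+$ inside $\Lambda^4(N)$ and is transverse to $\star\Lambda^2_8(N)$, so that the first two terms really do drop out of the projection. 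That transversality — and, more generally, the orthogonality of the three summands of $\Lambda^4(N)$ — is the place where I would double-check signs and normalizations before declaring the identity proved.
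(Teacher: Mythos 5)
Your proposal is correct and follows essentially the same route as the paper: the authors likewise expand $0=d^2\star(\alpha\w\Omega^-)$ using Lemma \ref{l:d66d612}, project onto the $\Lambda^4_8(N)$ component, and read off $-\tfrac12\star d^6_8J^*d^6_6\alpha$ from the third term and $-\star d^{12}_8d^6_{12}\alpha$ from the fourth via Lemma \ref{l:d312}. The transversality you flag is exactly the decomposition $\Lambda^4(N)=\Lambda^0(N)\cdot\omega^2\oplus\Lambda^1(N)\wedge\Omega^+\oplus\star\Lambda^2_8(N)$ already recorded in the paper, together with Lemma \ref{l:SU3ids}(ii), so nothing is missing.
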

\begin{proof}
Using Lemma \ref{l:d66d612}, we see that 
\[
\begin{split}
 0 	&= d^2\star(\alpha\w\Omega^-) \\
 	&=	-\frac12\,dd^6_1\alpha\w\Omega^+ 
						+ \frac12\,dd^6_1(J^*\alpha)\w\Omega^- + \frac12 d(J^* d^6_6\alpha) \w\omega + d\star d^6_{12}\alpha. 
\end{split}
\]
Let us focus on the component in $\Lambda^4_8(N)$
\[
0 = \pi^4_8\left(d^2\star(\alpha\w\Omega^-)\right) = \pi^4_8\left( \frac12 d(J^* d^6_6\alpha) \w\omega + d\star d^6_{12}\alpha\right).
\]
Using 1) of Lemma \ref{l:d66d612} and Lemma \ref{l:SU3ids}, we obtain
\[
\begin{split}
d(J^* d^6_6\alpha) \w\omega &= \left(-\frac13 d^6_1(J^*J^* d^6_6\alpha) \cdot\omega -\frac12 \star\left(J^*d^6_6J^* d^6_6\alpha\w\Omega^+ \right) + d^6_8J^* d^6_6\alpha\right)\w\omega\\
&= \frac13 d^6_1( d^6_6\alpha) \cdot\omega^2 -\frac12 \star\left(d^6_6J^* d^6_6\alpha\w\Omega^- \right)\w\omega + d^6_8J^* d^6_6\alpha\w\omega\\
&= \frac13 d^6_1( d^6_6\alpha) \cdot\omega^2 -\frac12 (d^6_6J^* d^6_6\alpha)\w\Omega^-  -\star d^6_8J^* d^6_6\alpha,
\end{split}
\]
so that
\[
\pi^4_8\left( \frac12 d(J^* d^6_6\alpha) \w\omega\right) = -\frac12 \star d^6_8J^* d^6_6\alpha. 
\]
Using now Lemma \ref{l:d312}, we get
\[
d\star d^6_{12}\alpha = -\frac12 J^*d^{12}_6d^6_{12}\alpha\wedge\Omega^+ - \star d^{12}_8d^6_{12}\alpha,
\]
and thus
\[
\pi^4_8\left( d\star d^6_{12}\alpha\right) = - \star d^{12}_8d^6_{12}\alpha. 
\]
Therefore, the component of $d^2\star(\alpha\w\Omega^-)$ in $\Lambda^4_8(N)$ is
\[
\pi^4_8\left(d^2\star(\alpha\w\Omega^-)\right)  = -\star\left(\frac12 d^6_8J^* d^6_6\alpha + d^{12}_8d^6_{12}\alpha\right),
\]
and the thesis follows. 
\end{proof}

\subsection{The global $dd^{\Omega^+}$ lemma}

We now study the characterization of the space $\Im(dd^{\Omega^+})$. 
We begin with the following analogue of Lemma \ref{l:ddphi_features}. 

\begin{lem} \label{l:ddOmega_features}
Let $(N,h,J,\omega,\Omega)$ be a Calabi-Yau $6$-manifold. 
Given $f\in\Lambda^0(N)$, the $3$-form $dd^{\Omega^+} f$ has the following properties:
\begin{enumerate}
\item $dd^{\Omega^+} f$ is exact;
\item $dd^{\Omega^+} f = \lambda_{\Omega^+}(\nabla\nabla f)\in \Lambda^0(N)\cdot \Omega^+\oplus\Lambda^3_{12}(N)$ and 
\[
dd^{\Omega^+} f=-\frac12\Delta f\cdot\Omega^+ + \star d^6_{12}(df);
\]
\item $\pi^2_{8}d^*(dd^{\Omega^+} f)=0$.
\end{enumerate}
\end{lem}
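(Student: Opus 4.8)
The plan is to transcribe the proof of Lemma~\ref{l:ddphi_features} into the $\SU(3)$ setting, with Lemma~\ref{l:d66d612} playing the role of Bryant--Harvey's formulae and with the parallelism $\nabla\Omega^+=0$ replacing $\nabla\psi=0$.

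Statement~1 is immediate from $dd^{\Omega^+}f=d(d^{\Omega^+}f)$. For Statement~2, I would rewrite $d^{\Omega^+}f=\nabla f\lrcorner\Omega^+=\star(df\wedge\Omega^-)$ (using $\star\Omega^+=\Omega^-$), so that $dd^{\Omega^+}f=d\star(df\wedge\Omega^-)$, and then apply Lemma~\ref{l:d66d612} with $\alpha=df$. Of the four terms produced, $d^6_1(df)=d^*df=\Delta f$, whereas $d^6_6(df)=\star d(df\wedge\Omega^-)=0$ and $d^6_1(J^*df)=\tfrac12\star d(df\wedge\omega^2)=0$, the latter two vanishing because $ddf=0$ together with $d\Omega^-=0$ and $d\omega=0$ (torsion-freeness of the Calabi--Yau structure). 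This yields the asserted formula $dd^{\Omega^+}f=-\tfrac12\Delta f\cdot\Omega^++\star d^6_{12}(df)$, which in particular exhibits $dd^{\Omega^+}f\in\Lambda^0(N)\cdot\Omega^+\oplus\Lambda^3_{12}(N)$; alternatively this membership is immediate from $dd^{\Omega^+}f=\lambda_{\Omega^+}(\nabla\nabla f)$ (Definition~\ref{Def:phiHess}, since $\Omega^+$ is parallel) together with the linear algebra, which identifies the image of $\lambda_{\Omega^+}|_{\mathrm{Sym}(V)}$ with $\R\cdot\Omega^+\oplus\Lambda^3_{12}$.

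For Statement~3, the cleanest route is to observe that the $2$-form $d^{\Omega^+}f=\star(df\wedge\Omega^-)$ is coclosed: indeed $df\wedge\Omega^-$ is a closed $4$-form, since $d(df\wedge\Omega^-)=-df\wedge d\Omega^-=0$, hence $d^*(d^{\Omega^+}f)=\pm\star d(df\wedge\Omega^-)=0$. Consequently $d^*(dd^{\Omega^+}f)=d^*d(d^{\Omega^+}f)=\Delta(d^{\Omega^+}f)$. Since $d^{\Omega^+}f=\star(J^*df\wedge\Omega^+)\in\Lambda^2_6(N)$ by Lemma~\ref{l:SU3ids}, and the Hodge Laplacian preserves the $\SU(3)$-refined type decomposition of $\Lambda^\bullet(N)$ — a standard fact, following from Joyce's general theory of torsion-free $G$-structures, or from the K\"ahler identities since $\Omega$ is parallel — it follows that $\Delta(d^{\Omega^+}f)\in\Lambda^2_6(N)$, and therefore $\pi^2_8 d^*(dd^{\Omega^+}f)=0$.

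The routine but delicate part throughout is the Hodge-star and sign bookkeeping: identifying $\nabla f\lrcorner\Omega^+$ with $\star(df\wedge\Omega^-)$, computing codifferentials of such expressions, and reading off the $\SU(3)$-irreducible components of the resulting forms. A more literal transcription of the $G_2$ proof of Statement~3 would instead use that $d^*(u\cdot\Omega^+)=-\star(du\wedge\Omega^-)\in\Lambda^2_6(N)$ has no $\Lambda^2_8$-component for $u\in\Lambda^0(N)$, and would try to control the $\Lambda^3_{12}$-part of $dd^{\Omega^+}f$ through the second-order identity of Lemma~\ref{l:d2su3} together with $d^6_6(df)=0$; the subtlety there is that this part carries an extra Hodge star ($\star d^6_{12}(df)$ rather than $d^6_{12}(df)$), so one must first reconcile it with the operator $d^{12}_8 d^6_{12}$ via the identity $\star\rho=J^*\rho$ on $\Lambda^3_{12}$. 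This is precisely the complication that the argument via ``$\Delta$ preserves types'' avoids, which is why I would present that one.
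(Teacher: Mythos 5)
Your proof of Statements 1 and 2 is essentially identical to the paper's: both rewrite $dd^{\Omega^+}f=d\star(df\wedge\Omega^-)$, apply Lemma \ref{l:d66d612} with $\alpha=df$, and kill the unwanted terms via $d^6_6(df)=\star d(df\wedge\Omega^-)=0$ and $d^6_1(J^*df)=\tfrac12\star d(df\wedge\omega^2)=0$. For Statement 3, however, you take a genuinely different route. The paper follows the literal transcription you describe at the end: it notes that $d^*(u\cdot\Omega^+)=-\star(du\wedge\Omega^-)$ has no $\Lambda^2_8$-component, and then disposes of the $\Lambda^3_{12}$-part by writing $\pi^2_8 d^*(d^6_{12}(df))=d^{12}_8d^6_{12}(df)=-\tfrac12 d^6_8J^*d^6_6(df)=0$ via the second-order identity of Lemma \ref{l:d2su3}. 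Your observation that the $\Lambda^3_{12}$-part of $dd^{\Omega^+}f$ is $\star d^6_{12}(df)$ rather than $d^6_{12}(df)$, so that one must pass through $\star\rho=J^*\rho$ (and Lemma \ref{l:d312}, which records both $d\rho$ and $d\star\rho$) before Lemma \ref{l:d2su3} applies, is a fair point: the paper's displayed line glosses over exactly this reconciliation. Your alternative --- $d^{\Omega^+}f=\star(J^*df\wedge\Omega^+)\in\Lambda^2_6(N)$ is coclosed since $d(df\wedge\Omega^-)=0$, hence $d^*(dd^{\Omega^+}f)=\Delta(d^{\Omega^+}f)\in\Lambda^2_6(N)$ because $\Delta$ preserves the refined type decomposition --- is correct and avoids the star/sign bookkeeping entirely; the type-preservation it invokes is a fact the paper itself uses later (in the proof of Theorem \ref{thm:SU3Hodge}) and justifies by Joyce's theory of torsion-free $G$-structures, and it holds pointwise so compactness is not needed. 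What the paper's route buys instead is that it stays entirely within the $d^p_q$ calculus developed in that section and runs parallel to the $G_2$ proof of Lemma \ref{l:ddphi_features}; either argument is acceptable.
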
 
\begin{proof} 

1.~is obvious. 
As for 2., we first observe that
\[
dd^{\Omega^+} f = d (\nabla f \lrcorner \Omega^+) = d\star(df\w\Omega^-). 
\]
We can then apply Lemma \ref{l:d66d612} with $\alpha=df$, obtaining 
\[
dd^{\Omega^+} f = -\frac12\Delta f\cdot\Omega^+ + \star d^6_{12}(df) ~\in~  \Lambda^0(N)\cdot \Omega^+\oplus\Lambda^3_{12}(N),
\]
since 
\[
d^6_6(df) = \star d(df \w\Omega^-) = 0,
\]
and
\[
d^6_1(J^*df) =  \frac12 \star d(df\w\omega^2) = 0. 
\]

Let us now prove 3. 
First, for any $u\in \Lambda^0(N)$, the $2$-form
\[
d^*(u\cdot\Omega^+) = - \star d \star (u\cdot\Omega^+) =  - \star d  (u\cdot \Omega^-) = - \star  (du\w \Omega^-)
\] 
has no component in $\Lambda^2_{8}(N)$. 
Furthermore, using lemmata \ref{l:d312} and \ref{l:d2su3}, we obtain
\[
\pi^2_{8}d^*(d^6_{12}(df)) = d^{12}_8d^6_{12}(df) = -\frac12 d^6_8 J^* d^6_6(df) = 0.
\]
\end{proof}

This result motivates the following. 

\begin{definition}
Let $(N,h,J,\omega,\Omega)$ be a Calabi-Yau $6$-manifold. Set $\Lambda^3_1(N) \coloneqq \Lambda^0(N)\cdot \Omega^+$ and
\[
C\coloneqq \left(\Lambda^3_1(N)\oplus\Lambda^3_{12}(N)\right)\cap \Ker(\pi^2_{8}d^*), \qquad K \coloneqq C\cap \Ker(d).
\]
\end{definition}

Lemma \ref{l:ddOmega_features} shows that $\Im(dd^{\Omega^+})\subseteq K$. 
In order to characterize its complement, we will need the next result. 

\begin{lem}\label{l:princsymbSU3}
The principal symbol $\sigma(dd^{\Omega^+})$ 
of the linear differential operator $dd^{\Omega^+}:\Lambda^0(N)\rightarrow\Lambda^3_1(N)\oplus\Lambda^3_{12}(N)$ gives the homomorphisms
\[ 
\sigma(dd^{\Omega^+})|_{x}(\xi):\R\to\Lambda^3_1(T^*_xN)\oplus\Lambda^3_{12}(T^*_xN) ,\quad 
\sigma(dd^{\Omega^+})|_{x}(\xi) : c \mapsto c\,\xi \wedge (\xi^\sharp\lrcorner \Omega^+),
\]
for all $x\in N$ and $\xi\in T^*_xN$. 
This symbol is injective.
\end{lem}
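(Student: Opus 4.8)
The plan is to mimic the strategy used in the analogous $G_2$ statement, Lemma~\ref{l:G2injectivity}. First I would recall that the principal symbol of a first-order operator applied after a first-order operator is the composition of the symbols, so $\sigma(dd^{\Omega^+})|_x(\xi) = \sigma(d)|_x(\xi) \circ \sigma(d^{\Omega^+})|_x(\xi)$. Since $d^{\Omega^+}f = \nabla f \lrcorner \Omega^+ = \star(df \wedge \Omega^-)$ (as noted in the proof of Lemma~\ref{l:ddOmega_features}), the symbol of $d^{\Omega^+}$ sends $c \in \R$ to $c\,\xi^\sharp\lrcorner\Omega^+$, and then the symbol of $d$ wedges with $\xi$, giving $c\,\xi\wedge(\xi^\sharp\lrcorner\Omega^+)$. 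This establishes the stated formula. (Alternatively, one can read this off from the general formula $\lambda_{\Omega^+}(\nabla\nabla f)$ via Definition~\ref{Def:phiHess}, since on a Calabi-Yau manifold $\Omega^+$ is parallel, so the leading-order behaviour of $\mathcal{H}^{\Omega^+}(f)$ agrees with $dd^{\Omega^+}f$.)

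For injectivity, fix $x\in N$ and $\xi\in T^*_xN\smallsetminus\{0\}$; I must show $\xi\wedge(\xi^\sharp\lrcorner\Omega^+)\neq 0$. By $\mathrm{SU}(3)$-equivariance and the fact that $\mathrm{SU}(3)$ acts transitively on the unit sphere of $T^*_xN$, it suffices to check this for $\xi = a e^1$ with $a\neq 0$, where $(e^1,\dots,e^6)$ is an adapted $\mathrm{SU}(3)$ coframe so that, in standard conventions, $\omega = e^{12}+e^{34}+e^{56}$ and $\Omega^+ = e^{135}-e^{146}-e^{236}-e^{245}$ (the precise signs depend on the convention fixed in the linear algebra subsection; the argument is insensitive to them). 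Then $e^{1\sharp}\lrcorner\Omega^+ = e^{35}$ (up to sign, with no $e^1$-terms surviving since the three monomials of $\Omega^+$ not containing $e^1$ drop out), so $\xi\wedge(\xi^\sharp\lrcorner\Omega^+) = a^2\, e^{135} \neq 0$. Hence the map $c\mapsto c\,\xi\wedge(\xi^\sharp\lrcorner\Omega^+)$ is injective.

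I should also verify that the image genuinely lands in $\Lambda^3_1(T^*_xN)\oplus\Lambda^3_{12}(T^*_xN)$, i.e.\ that it has no component in $\R\cdot\Omega^-$ or $\Lambda^3_6$; but this is immediate from the fact that the symbol factors through $\lambda_{\Omega^+}$ restricted to symmetric endomorphisms, whose image is $\R\cdot\Omega^+\oplus\Lambda^3_{12}$ by the linear-algebra lemma preceding this subsection — alternatively, in the explicit coframe, $e^{135}$ visibly lies in $\R\cdot\Omega^+\oplus\Lambda^3_{12}$ once one checks it is orthogonal to $\Omega^-$ and to $\Lambda^3_6 = \Lambda^1\wedge\omega$ (indeed $e^{135}\wedge\omega = 0$). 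The main obstacle, such as it is, is purely bookkeeping: pinning down the exact sign conventions for $\Omega^\pm$ and the coframe so that the monomial computations are internally consistent with the identities collected in Lemma~\ref{l:SU3ids}. There is no genuine analytic or conceptual difficulty; the computation is a direct analogue of the $G_2$ case, where $\xi\wedge(\xi^\sharp\lrcorner\ph) = a^2(e^{123}+e^{145}+e^{167})$ played the same role.
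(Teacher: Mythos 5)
Your proposal follows essentially the same route as the paper's proof: reduce to $\xi = a e^1$ via the transitive $\SU(3)$-action on the unit sphere and compute the contraction explicitly in an adapted coframe. One small arithmetic slip: with $\Omega^+ = e^{135}-e^{146}-e^{236}-e^{245}$, exactly two (not three) of the monomials fail to contain $e^1$, so $\xi^\sharp\lrcorner\Omega^+ = a\,(e^{35}-e^{46})$ and $\xi\wedge(\xi^\sharp\lrcorner\Omega^+) = a^2\,(e^{135}-e^{146})$, not $a^2\,e^{135}$; this does not affect injectivity, and the corrected form still lies in $\R\cdot\Omega^+\oplus\Lambda^3_{12}(T^*_xN)$ as you require.
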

\begin{proof}
Let $(e^1,\ldots,e^6)$ be a special unitary coframe of $T^*_xN$, so that 
\[
\Omega^+|_x =  e^{135} - e^{146} - e^{236} - e^{245}.
\]
Let $\xi \in T^*_xN \smallsetminus\{0\}$. It is not restrictive assuming that $\xi = a e^1,$ for a certain $a\neq0$, 
as the group SU(3) acts transitively on the unit sphere in $T_x^*N$ preserving $\Omega^+|_x$.  
We then have
\[
\xi \wedge (\xi^\sharp\lrcorner \Omega^+) = a^2 \left(e^{135} - e^{146}\right). 
\]
From this, it easily follows that the map $\sigma(dd^{\Omega^+})|_{x}(\xi)$ is injective. 
\end{proof}

We are now ready to prove the Calabi-Yau analogue of Theorem \ref{ThmSplittingKahler} (K\"ahler case) and Theorem  \ref{thm:G2Hodge} ($G_2$ case). 
Again, the technical details are similar to those seen above, so we will provide only a sketch of the main arguments.

\begin{thm}\label{thm:SU3Hodge}
Let $(N,h,J,\omega,\Omega)$ be a compact Calabi-Yau $6$-manifold.  Then there exists an $L^2$-orthogonal decomposition 
\[
K=\Im(dd^{\Omega^+})\oplus\mathcal{H}^3_{1}\oplus\mathcal{H}^3_{12},
\]
where $\mathcal{H}^3_{1} = \{u\cdot\Omega^+\in\Lambda^3_1(N)\,:\, \Delta(u\cdot\Omega^+)=0\} = \R\cdot\Omega^+$ and
$\mathcal{H}^3_{12} = \{\rho\in\Lambda^3_{12}(N)\,:\, \Delta\rho=0 \}$. 
\end{thm}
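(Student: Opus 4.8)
The plan is to replicate, step by step, the proof of Theorem \ref{thm:G2Hodge}, with the calibration $\ph$ replaced by $\Omega^+$, the companion form $\psi$ replaced by $\Omega^-$, the subspace $\Lambda^3_{27}$ replaced by $\Lambda^3_{12}$, and the constraint $\pi^2_{14}d^*=0$ replaced by $\pi^2_8 d^*=0$. First I would record the formal adjoint of $dd^{\Omega^+}$. Since $\nabla f\lrcorner\Omega^+ = \star(df\wedge\Omega^-)$ and $d\Omega^-=0$, the integration by parts used in the $G_2$ case carries over verbatim and gives
\[
(dd^{\Omega^+})^t = -\star\big(dd^*(\cdot)\wedge\Omega^-\big):\Lambda^3_1(N)\oplus\Lambda^3_{12}(N)\to\Lambda^0(N).
\]
Viewing $dd^{\Omega^+}:L^2(N)\to L^2(\Lambda^3_1(N)\oplus\Lambda^3_{12}(N))$ as an unbounded operator with domain $H^2(N)$, Lemma \ref{l:princsymbSU3} (injectivity of the principal symbol, which also forces closed image in the $H^2\to L^2$ setting) together with Proposition \ref{p:abstractdecomp} yields
\[
L^2(\Lambda^3_1(N)\oplus\Lambda^3_{12}(N)) = \Im(dd^{\Omega^+})\oplus\Ker\big((dd^{\Omega^+})^*\big),
\]
where $(dd^{\Omega^+})^*$ is the distributional extension of $(dd^{\Omega^+})^t$.

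Next I would refine this to the subspace $K$ by introducing the auxiliary operator
\[
Q:\Lambda^3_1(N)\oplus\Lambda^3_{12}(N)\to\Lambda^0(N)\oplus\Lambda^4(N)\oplus\Lambda^2_8(N),\qquad Q(\alpha)=\big((dd^{\Omega^+})^t\alpha,\ d\alpha,\ \pi^2_8 d^*\alpha\big),
\]
so that $\Ker(Q)=\Ker((dd^{\Omega^+})^t|_K)$. Its principal symbol at $\xi\neq0$ sends $\alpha$ to $\big(-\star(\xi\wedge(\xi^\sharp\lrcorner\alpha)\wedge\Omega^-),\ \xi\wedge\alpha,\ \pi^2_8(\xi^\sharp\lrcorner\alpha)\big)$. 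For $\alpha$ in the kernel, $\xi\wedge\alpha=0$ forces $\xi\wedge(\xi^\sharp\lrcorner\alpha)=|\xi|^2\alpha$, hence $|\xi|^2\star(\alpha\wedge\Omega^-)=0$; since $\Omega^+\wedge\Omega^-$ is a nonzero multiple of $\vol_h$, this kills the $\Lambda^3_1$-component, so $\alpha\in\Lambda^3_{12}(T^*_xN)$. Using the transitivity of $\SU(3)$ on the unit sphere to reduce to $\xi=a e^1$ in a special unitary coframe, the conditions $\alpha\wedge\omega=0$, $\alpha\wedge\Omega^\pm=0$, $\xi\wedge\alpha=0$ and $\pi^2_8(\xi^\sharp\lrcorner\alpha)=0$ then force $\alpha=0$, exactly as in the $G_2$ case. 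Injectivity of $\sigma(Q)$ gives the usual regularity, so
\[
\Ker\big((dd^{\Omega^+})^*|_{\overline{\Ker(d)\cap\Ker(\pi^2_8 d^*)}}\big)=\Ker\big((dd^{\Omega^+})^t|_K\big),
\]
and this space consists of smooth forms.

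It then remains to identify $\Ker((dd^{\Omega^+})^t|_K)$. Since the torsion-free $\SU(3)$-structure makes the Hodge Laplacian preserve the type decomposition (as in the $G_2$ and K\"ahler cases), on $K$ one has $(dd^{\Omega^+})^t=-\star(\Delta(\cdot)\wedge\Omega^-)$, so $\Ker((dd^{\Omega^+})^t|_K)=\Ker(\Delta(\cdot)\wedge\Omega^-)\cap K$. Writing a generic element as $u\cdot\Omega^++\rho$ with $\rho\in\Lambda^3_{12}(N)$, and using that $\Delta(u\cdot\Omega^+)=(\Delta u)\cdot\Omega^+$ (because $\Omega^+$ is parallel), $\Delta\rho\in\Lambda^3_{12}(N)$, and $\rho\wedge\Omega^-=0$, the equation $\Delta(u\cdot\Omega^++\rho)\wedge\Omega^-=0$ reduces to $\Delta u=0$, i.e.\ $u$ constant by compactness; hence $\Ker(\Delta(\cdot)\wedge\Omega^-)\cap K=\R\cdot\Omega^+\oplus(\Lambda^3_{12}(N)\cap K)$. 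Finally, for $\rho\in\Lambda^3_{12}(N)$ Lemma \ref{l:d312} shows that $d\rho=0$ is equivalent to $d^{12}_6\rho=0$ and $d^{12}_8(\star\rho)=0$, while $\pi^2_8 d^*\rho=d^{12}_8\rho$; combining $d^{12}_6\rho=0$ and $d^{12}_8\rho=0$ with the second formula in Lemma \ref{l:d312} also gives $d\star\rho=0$, so $\rho$ is harmonic, and conversely every harmonic $\rho$ lies in $K$. Thus $\Lambda^3_{12}(N)\cap K=\mathcal{H}^3_{12}$ and $\Ker((dd^{\Omega^+})^t|_K)=\R\cdot\Omega^+\oplus\mathcal{H}^3_{12}=\mathcal{H}^3_1\oplus\mathcal{H}^3_{12}$. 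Substituting back gives
\[
L^2(\Lambda^3_1(N)\oplus\Lambda^3_{12}(N))\cap\overline{\Ker(d)\cap\Ker(\pi^2_8 d^*)}=\Im(dd^{\Omega^+})\oplus\mathcal{H}^3_1\oplus\mathcal{H}^3_{12},
\]
and intersecting both sides with $K$ — using Lemma \ref{l:ddOmega_features}, so that $\Im(dd^{\Omega^+})\subseteq K$, together with elliptic regularity — yields the claimed $L^2$-orthogonal splitting.

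The step I expect to be the main obstacle is the pointwise verification that $\sigma(Q)$ is injective: once reduced to $\xi=a e^1$, one must check that the defining conditions of $\Lambda^3_{12}$ together with $\xi\wedge\alpha=0$ and $\pi^2_8(\xi^\sharp\lrcorner\alpha)=0$ admit only $\alpha=0$, which requires working out the relevant $\SU(3)$ linear algebra in the explicit coframe of Lemma \ref{l:princsymbSU3}, where $\Omega^+=e^{135}-e^{146}-e^{236}-e^{245}$. This is the $\SU(3)$ counterpart of the analogous computation for $Q$ in the proof of Theorem \ref{thm:G2Hodge}. Everything else — the closedness of the image, the domain of the adjoint, and the passage between the $L^2$ and smooth settings — is identical to the $G_2$ argument and is covered by Appendix \ref{Sect:AnalyticTools}.
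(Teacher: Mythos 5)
Your proposal is correct and follows essentially the same route as the paper's proof: the same adjoint $-\star(dd^*(\cdot)\wedge\Omega^-)$, the same auxiliary operator $Q=((dd^{\Omega^+})^t,d,\pi^2_8d^*)$ with the symbol-injectivity argument reducing to $\xi=ae^1$ via $\SU(3)$-transitivity, and the same identification $\Ker((dd^{\Omega^+})^t|_K)=\R\cdot\Omega^+\oplus\mathcal{H}^3_{12}$ using that $\Delta$ preserves types, $\rho\wedge\Omega^-=0$, and Lemma \ref{l:d312}. The paper likewise only sketches the $\SU(3)$ linear algebra you flag as the main obstacle (it notes that $\alpha\wedge\omega=0$ gives $\xi^\sharp\lrcorner\alpha\in\Lambda^2_6\oplus\Lambda^2_8$ and that $\xi\wedge\alpha=0$ then forces $\xi^\sharp\lrcorner\alpha\wedge\Omega^-=0$, hence $\xi^\sharp\lrcorner\alpha\in\Lambda^2_8$), so your outline matches it point for point.
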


\begin{proof}
Consider the operator
\[
dd^{\Omega^+}:\Lambda^0(N)\rightarrow \Lambda^3_1(N)\oplus \Lambda^3_{12}(N), 
\]
endow both spaces with the $L^2$ metric, and consider a generic element $u\cdot\Omega^+ + \rho \in \Lambda^3_1(N)\oplus \Lambda^3_{12}(N)$. 
Then 
\begin{align*}
\int_N (dd^{\Omega^+}f, u\cdot\Omega^+ + \rho )\vol_h
&=\int_N (\star(df\w\Omega^-),d^*(u\cdot\Omega^+ + \rho))\vol_h\\
&=\int_N df\wedge\Omega^- \wedge d^*(u\cdot\Omega^+ + \rho)\\
&=-\int_N f\,  dd^*(u\cdot\Omega^+ + \rho)\w \Omega^-\\
&=-\int_N (f,\star(dd^*(u\cdot\Omega^+ + \rho)\wedge\Omega^-))\vol_h,
\end{align*}
Therefore, within the given spaces, the adjoint of $dd^{\Omega^+}$ is the map
\[
(dd^{\Omega^+})^t = - \star(dd^*(\cdot)\wedge\Omega^-) : \Lambda^3_1(N)\oplus \Lambda^3_{12}(N) \to \Lambda^0(N).
\]

Consider now the unbounded operator $dd^{\Omega^+}:L^2(N)\to L^2(\Lambda^3_1(N)\oplus\Lambda^3_{12}(N))$ with domain $D=H^2(N)$. 
Its adjoint $(dd^{\Omega^+})^*$ is an appropriate extension of $(dd^{\Omega^+})^t$. 
Using Proposition \ref{p:abstractdecomp} and Lemma \ref{l:princsymbSU3} we get the decomposition
\[
L^2(\Lambda^3_1(N)\oplus\Lambda^3_{12}(N))=\Im(dd^{\Omega^+})\oplus \Ker((dd^{\Omega^+})^*). 
\]

In order to refine this decomposition, we consider the operators 
\[
d:\Lambda^3(N)\rightarrow\Lambda^4(N), \quad \pi^2_{8}d^*:\Lambda^3(N)\rightarrow\Lambda^2_{8}(N),
\]
and the auxiliary operator $Q:\Lambda^3_1(N)\oplus\Lambda^3_{12}(N)\rightarrow\Lambda^0(N)\oplus\Lambda^4(N)\oplus\Lambda^2_{8}(N)$
\[
Q:\alpha\mapsto\left((dd^{\Omega^+})^t(\alpha),d\alpha,\pi^2_{8}d^*(\alpha)\right).
\]
Its principal symbol $\sigma(Q)$ gives the following homomorphism at $x\in N$ 
\[
\sigma(Q)|_x(\xi): \alpha \mapsto \left(- \star (\xi \wedge(\xi^\sharp\lrcorner\alpha)\wedge\Omega^-), \xi\wedge\alpha, 
\pi^2_{8}(\xi^\sharp \lrcorner \alpha)\right),
\]
for every $\xi\in T^*_xN$. We now show that the principal symbol is injective. 
Let $\xi\neq0$ and consider $\alpha\in \Ker(\sigma(Q)|_x(\xi))\subseteq \Lambda^3_1(T^*_xN)\oplus\Lambda^3_{12}(T^*_xN)$. 
Then $\alpha\w\omega=0$, whence it follows that 
$\xi^\sharp \lrcorner \alpha\w\omega =  \alpha\w \xi^\sharp \lrcorner\omega$, thus 
$\xi^\sharp\lrcorner \alpha \wedge \omega^2 =0$. This shows that $\xi^\sharp\lrcorner \alpha \in\Lambda^2_6(T^*_xN)\oplus \Lambda^2_8(T^*_xN)$. 
The condition $\xi\wedge\alpha=0$ implies
\[
0 =  \star (\xi \wedge(\xi^\sharp\lrcorner\alpha)\wedge\Omega^-) = |\xi|^2 \star(\alpha\wedge\Omega^-),
\]
thus $\alpha\in\Lambda^3_{12}(T^*_xN)$. 
Now, working with respect to a special unitary coframe $(e^1,\ldots,e^6)$ of $T^*_xN$ 
and using the SU(3) action to reduce to the case $\xi = a e^1$ with $a\neq0$, we see that the condition $\xi\wedge\alpha=0$ implies  
$\xi^\sharp\lrcorner \alpha \wedge \Omega^-=0$, 
whence it follows that $\xi^\sharp\lrcorner \alpha \in \Lambda^2_{8}(T^*_xN)$. On the other hand, 
since $\alpha\in \Ker(\sigma(Q)|_x(\xi))$ we also have $\pi^2_{8}(\xi^\sharp \lrcorner \alpha)=0$, which forces $\xi^\sharp\lrcorner \alpha = 0$.  
This in turn implies  $|\xi|^2\alpha = \xi \wedge (\xi^\sharp \lrcorner \alpha) =0$,
whence $\alpha=0$.

Since the principal symbol is injective, we get 
\[
\Ker\left((dd^{\Omega^+})^*|_{\overline{\Ker(d)\cap\Ker(\pi^2_{8}d^*)}}\right) = 
\Ker\left((dd^{\Omega^+})^t|_{\Ker(d)\cap\Ker(\pi^2_{8}d^*)}\right)=\Ker((dd^{\Omega^+})^t|_{K}).
\] 
We now focus on $\Ker((dd^{\Omega^+})^t|_{K})$. Since
\[
(dd^{\Omega^+})^t|_{K}=-\star(\Delta|_{K}(\cdot)\wedge\Omega^-), 
\]
we have
\[
\Ker((dd^{\Omega^+})^t|_{K})=\Ker(\Delta(\cdot)\wedge\Omega^-)\cap K,
\]
where $\Delta$ has domain $\Lambda^3_1(N)\oplus\Lambda^3_{12}(N)$.
Recall that $\Delta$ preserves types and that $\rho\wedge\Omega^-=0$, for all $\rho\in\Lambda^3_{12}(N)$. Consequently, we have
\[
\Delta(u\cdot\Omega^+ + \rho)\w\Omega^- = \Delta(u\cdot\Omega^+)\w\Omega^- + \Delta\rho\w\Omega^-  = 4\Delta u \star1. 
\]
It follows that 
\[
\Ker(\Delta(\cdot)\wedge\Omega^-) \cap K =(\R\cdot\Omega^+ \oplus\Lambda^3_{12}(N))\cap K 
=\R\cdot\Omega^+\oplus(\Lambda^3_{12}(N)\cap K), 
\]
since $\R\cdot \Omega^+ \leq K$.

By Lemma \ref{l:d312}
\[
d\rho=0 \quad\Leftrightarrow\quad d^{12}_6\rho=0\quad\mbox{and}\quad d^{12}_{8}\star\rho=0.
\]
Comparing with the formula for $\d^*\rho=0$, we see that the additional condition $\pi^2_{8}(d^*\rho)=0$ suffices to obtain 
$\rho\in\mathcal{H}^3_{12}$. Therefore $\Lambda^3_{12}(N))\cap K = \mathcal{H}^3_{12}$. 

Summing up, we have 
$\Ker((dd^{\Omega^+})^t|_{K})=\R\cdot\Omega^+ \oplus\mathcal{H}^3_{12}=\mathcal{H}^3_{1}\oplus \mathcal{H}^3_{12}$, and 
\[
L^2(\Lambda^3_1(N)\oplus\Lambda^3_{12}(N))\cap\overline{\Ker(d)\cap\Ker(\pi^2_{8}d^*)} = 
\Im(dd^{\Omega^+})\oplus\mathcal{H}^3_{1}\oplus \mathcal{H}^3_{12}.
\]
Intersecting both sides with $K$, the thesis follows. 

\end{proof}

\begin{cor}[Global $dd^{\Omega^+}$ lemma]\label{cor:SU3globallemma} 
Let $(N,h,J,\omega,\Omega)$ be a compact Calabi-Yau $6$-manifold.
Fix a form $\alpha\in\Lambda^3_{1}(N)\oplus\Lambda^3_{12}(N)$. 
If (i) $\alpha$ is globally $d$-exact and (ii) $\pi^2_{8}(d^*\alpha)=0$, then $\alpha$ is globally $dd^{\Omega^+}$-exact.
\end{cor}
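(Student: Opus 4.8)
The plan is to imitate almost verbatim the proofs of the K\"ahler case (Corollary \ref{cor:ddomega}) and the $G_2$ case (Corollary \ref{cor:G2globallemma}): the statement should drop out of the Hodge decomposition in Theorem \ref{thm:SU3Hodge} once one recalls that, on a compact Riemannian manifold, exact forms are $L^2$-orthogonal to harmonic forms. In other words, all the genuine work has already been carried out in the preceding lemmata and in Theorem \ref{thm:SU3Hodge}, and what remains is purely bookkeeping.

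First I would check that the two hypotheses put $\alpha$ into the space $K$ appearing in Theorem \ref{thm:SU3Hodge}. Indeed, by assumption $\alpha\in\Lambda^3_1(N)\oplus\Lambda^3_{12}(N)$; condition (i) gives $\alpha\in\Im(d)\subseteq\Ker(d)$; and condition (ii) is precisely $\alpha\in\Ker(\pi^2_8 d^*)$. Hence $\alpha\in C\cap\Ker(d)=K$. Next, since the Calabi-Yau metric is Ricci-flat K\"ahler its holonomy lies in $\SU(3)$, so the Hodge Laplacian preserves the $\SU(3)$-type decomposition of $3$-forms (the fact already invoked in the proof of Theorem \ref{thm:SU3Hodge}); consequently $\mathcal{H}^3(N)\cap\left(\Lambda^3_1(N)\oplus\Lambda^3_{12}(N)\right)=\mathcal{H}^3_1\oplus\mathcal{H}^3_{12}$. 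Because $\alpha$ is $d$-exact it is orthogonal to every harmonic $3$-form, hence in particular orthogonal to $\mathcal{H}^3_1\oplus\mathcal{H}^3_{12}$.

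Finally I would feed this into the $L^2$-orthogonal splitting $K=\Im(dd^{\Omega^+})\oplus\mathcal{H}^3_1\oplus\mathcal{H}^3_{12}$ of Theorem \ref{thm:SU3Hodge}: an element of $K$ orthogonal to the two harmonic summands must lie in $\Im(dd^{\Omega^+})$, so $\alpha=dd^{\Omega^+}f$ for some $f\in\Lambda^0(N)$, as claimed. There is essentially no obstacle left; the only point deserving a line of care is the identification of ``harmonic $3$-forms lying in $\Lambda^3_1(N)\oplus\Lambda^3_{12}(N)$'' with $\mathcal{H}^3_1\oplus\mathcal{H}^3_{12}$, which is exactly where the type-preservation of $\Delta$ enters, together with the (here unneeded but worth recording) reverse inclusion $\Im(dd^{\Omega^+})\subseteq K\cap\Im(d)$ supplied by Lemma \ref{l:ddOmega_features}.
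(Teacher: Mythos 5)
Your proposal is correct and follows exactly the argument the paper uses for the analogous K\"ahler and $G_2$ corollaries (corollaries \ref{cor:ddomega} and \ref{cor:G2globallemma}): exact forms are orthogonal to harmonic forms, type-preservation of $\Delta$ identifies the harmonic part of $\Lambda^3_1(N)\oplus\Lambda^3_{12}(N)$ with $\mathcal{H}^3_1\oplus\mathcal{H}^3_{12}$, and Theorem \ref{thm:SU3Hodge} then forces $\alpha\in\Im(dd^{\Omega^+})$. The paper leaves this proof implicit, but your write-up is precisely the intended one.
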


\subsection{Cohomology}

Also in this case we find a cohomological point of view on some of the spaces of harmonic 3-forms on $N.$ 
Indeed, we can introduce the complex
\begin{equation}\label{eq:complexSU3}
\Lambda^0(N)	\xrightarrow[]{dd^{\Omega^+}} \Lambda^0(N)\cdot \Omega^+\oplus\Lambda^3_{12}(N)  
			\xrightarrow[]{(d,\pi^2_{8}d^*)} \Lambda^4(N)\oplus \Lambda^2_8(N),
\end{equation}
and define the $dd^{\Omega^+}$-cohomology space 
\[
H^{\Omega^+}(N) \coloneqq (\ker(d)\cap\ker(\pi^2_8d^*))/\Im(dd^{\Omega^+}) = K/\Im(dd^{\Omega^+}). 
\]
Theorem \ref{thm:SU3Hodge} can then be summarized as follows. 

\begin{cor}\label{cor:SU3coh}
The linear map
\[
H^{\Omega^+}(N)\to H^3(N;\R),\quad [\alpha]_{\Omega^+} \mapsto [\alpha]
\]
is injective and defines an isomorphism 
$H^{\Omega^+}(N)\simeq \mathcal{H}^3_{1} \oplus \mathcal{H}^3_{12}.$
\end{cor}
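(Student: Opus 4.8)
The plan is to deduce Corollary \ref{cor:SU3coh} from Theorem \ref{thm:SU3Hodge} in exactly the same way that Corollary \ref{cor:G2coh} follows from Theorem \ref{thm:G2Hodge}, so the argument is short and formal. First I would observe that the complex \eqref{eq:complexSU3} is genuinely a complex, i.e. $(d,\pi^2_8 d^*)\circ dd^{\Omega^+}=0$: this is precisely the content of Lemma \ref{l:ddOmega_features}, whose statement 1.~gives $d\circ dd^{\Omega^+}=0$ and whose statement 3.~gives $\pi^2_8 d^*\circ dd^{\Omega^+}=0$. Hence $H^{\Omega^+}(N)=K/\Im(dd^{\Omega^+})$ is well-defined, and Theorem \ref{thm:SU3Hodge} directly identifies the quotient: the $L^2$-orthogonal decomposition $K=\Im(dd^{\Omega^+})\oplus\mathcal{H}^3_1\oplus\mathcal{H}^3_{12}$ means the projection $K\to\mathcal{H}^3_1\oplus\mathcal{H}^3_{12}$ along $\Im(dd^{\Omega^+})$ descends to an isomorphism $H^{\Omega^+}(N)\xrightarrow{\sim}\mathcal{H}^3_1\oplus\mathcal{H}^3_{12}$.

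Next I would check that the map $[\alpha]_{\Omega^+}\mapsto[\alpha]$ into de Rham cohomology $H^3(N;\R)$ is well-defined and injective. Well-definedness: any representative of a $dd^{\Omega^+}$-class is $d$-closed (it lies in $K\subseteq\Ker(d)$), and two representatives differ by an element of $\Im(dd^{\Omega^+})\subseteq\Im(d)$ (Lemma \ref{l:ddOmega_features}, statement 1.), so the de Rham class is unchanged. Injectivity: suppose $\alpha\in K$ is $d$-exact; since $\alpha$ also satisfies $\pi^2_8(d^*\alpha)=0$, the global $dd^{\Omega^+}$ lemma (Corollary \ref{cor:SU3globallemma}) shows $\alpha=dd^{\Omega^+}f$ for some $f$, i.e.~$[\alpha]_{\Omega^+}=0$. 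Equivalently one can argue directly from Theorem \ref{thm:SU3Hodge}: an exact form in $K$ is $L^2$-orthogonal to all harmonic forms, hence to $\mathcal{H}^3_1\oplus\mathcal{H}^3_{12}$, so it lies in $\Im(dd^{\Omega^+})$. Combining the two facts, the composite $H^{\Omega^+}(N)\to H^3(N;\R)$ is injective with image equal to the de Rham classes represented by $\mathcal{H}^3_1\oplus\mathcal{H}^3_{12}$, which is the asserted isomorphism $H^{\Omega^+}(N)\simeq\mathcal{H}^3_1\oplus\mathcal{H}^3_{12}$ (using that distinct harmonic forms represent distinct de Rham classes).

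There is essentially no obstacle here: all the analytic work — the closed-image and elliptic-regularity arguments, the symbol injectivity of $dd^{\Omega^+}$ and of the auxiliary operator $Q$, and the identification of $\Ker((dd^{\Omega^+})^t|_K)$ with $\mathcal{H}^3_1\oplus\mathcal{H}^3_{12}$ — has already been carried out in the proof of Theorem \ref{thm:SU3Hodge}, which in turn leans on lemmata \ref{l:d66d612}, \ref{l:d312}, \ref{l:d2su3}, \ref{l:ddOmega_features} and \ref{l:princsymbSU3}. The only point requiring a word of care is that a \emph{harmonic} $3$-form decomposes compatibly with the $\SU(3)$-splitting, so that $\mathcal{H}^3_1\oplus\mathcal{H}^3_{12}$ is literally a subspace of the space of harmonic $3$-forms and therefore injects into $H^3(N;\R)$; this is exactly the statement (recorded in the $G_2$ discussion and valid here by the same Joyce-type argument, or by the $\SU(3)$ analogue of Bryant–Harvey's formulae) that $\Delta$ preserves types, which is already invoked inside the proof of Theorem \ref{thm:SU3Hodge}. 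Thus the proof of the corollary is simply: "This is a restatement of Theorem \ref{thm:SU3Hodge}, together with the observation that exact forms are $L^2$-orthogonal to harmonic forms," mirroring the proof of Corollary \ref{cor:G2coh} verbatim with $G_2$ data replaced by $\SU(3)$ data.
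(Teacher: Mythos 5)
Your proposal is correct and follows exactly the route the paper intends: Corollary \ref{cor:SU3coh} is presented there as a direct summary of Theorem \ref{thm:SU3Hodge} (mirroring how Corollary \ref{cor:G2coh} summarizes Theorem \ref{thm:G2Hodge}), with well-definedness from Lemma \ref{l:ddOmega_features} and injectivity from the orthogonality of exact forms to harmonic forms. No gaps.
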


Once again, one could alternatively prove Theorem  \ref{thm:SU3Hodge} applying the formalism of elliptic complexes to \eqref{eq:complexSU3}, 
as discussed in remarks \ref{rem:alternativeKahler} and  \ref{rem:alternativeG2}.

\begin{remark}
The parallel calibration $\Omega^-$ and the corresponding operator $dd^{\Omega^-}$ can be used to introduce the $dd^{\Omega^-}$-cohomology space,     
which turns out to be isomorphic to $\R\cdot\Omega^-\oplus  \mathcal{H}^3_{12}$, thus to the $dd^{\Omega^+}$-cohomology space. 
The discussion is similar to the one involving the operator $dd^{\Omega^+}$, so we omit the details. 
\end{remark}

%%%%%%%%%%%%%%%%%%%%%%%%%%%%%%
%%%%%%%%%%%%%%%%%%%%%%%%%%%%%%
%%%%%%%%%%%%%%%%%%%%%%%%%%%%%%
%%%%%%%%%%%%%%%%%%%%%%%%%%%%%%
%%%%%%%%%%%%%%%%%%%%%%%%%%%%%%

\section{A geometric viewpoint  on $dd^\phi$-cohomology}\label{s:gerbes}
The goal of this section is to provide a geometric interpretation of certain classes in the $dd^\phi$-cohomology space $H^\phi(M)$. 
This is inspired by the original motivation in \cite{BottChern} for defining the cohomology space $H^{1,1}_{BC}(M)$, related to the theory of holomorphic line bundles. We shall focus on the $G_2$ case using the analogous theory of gerbes, as presented in \cite{HitchinGerbes}.

\begin{remark}
Below, we shall restrict our attention to compact oriented manifolds simply because this allows us to rely on Poincar\'e duality, thus obtaining a more complete theory.
\end{remark}

\subsection{Gerbes vs. line bundles}

Recall the following notion.

\begin{definition}
Let $M$ be a smooth compact oriented $n$-dimensional manifold. 
A {\em gerbe} on $M$ is a \v{C}ech cohomology class $\mathcal{G}\in H^2(M;\C^{\infty}(\C^*))$.
\end{definition}

Let us emphasize the following facts, referring to \cite{HitchinGerbes} for details:
\begin{enumerate}[(1)]
\item Recall that $H^0(M;\C^{\infty}(\C^*))$ parametrizes global smooth functions on $M$ with values in $\C^*$, while $H^1(M;\C^{\infty}(\C^*))$ parametrizes isomorphism classes of smooth complex line bundles on $M$. 
Gerbes are the natural next object in this list. 

\item It is sometimes important to distinguish classes and specific representatives. By definition, given an open covering $\{U_\alpha\}$ of $M$ and up to direct limits,
\begin{itemize} 
\item a specific line bundle is determined by smooth functions $g_{\alpha\beta}:U_\alpha\cap U_\beta\rightarrow\C^*$, satisfying the cocycle condition;
\item to represent a gerbe we need smooth functions $g_{\alpha\beta\gamma}:U_\alpha\cap U_\beta\cap U_\gamma\rightarrow \C^*$, satisfying the cocyle condition.
\end{itemize}
A gerbe can alternatively be represented by a collection of local smooth complex line bundles $L_{\alpha\beta}$ on $U_\alpha\cap U_\beta$, satisfying certain conditions.
\item Both $H^1(M;\C^{\infty}(\C^*))$ and $H^2(M;\C^{\infty}(\C^*))$ have a natural group structure. We shall refer to this as the tensor product, or as the \textit{twisting operation}. This defines the notion of trivial class. For example, the trivial gerbe $\mathcal{G}_0$ can be represented by the data $g_{\alpha\beta\gamma}\equiv 1$.

Adopting the alternative viewpoint in (2), the tensor product of two gerbes is represented by the tensor product of the corresponding local line bundles. 
\end{enumerate}
Recall that the short exact sequence of groups
$$0\rightarrow\Z\rightarrow\C\rightarrow\C^*\rightarrow 1$$
leads to a short exact sequence of sheaves, thus to a long exact sequence of cohomology groups
\[
\cdots \rightarrow H^k(M;C^{\infty}(\C))\rightarrow H^k(M;C^{\infty}(\C^*))\rightarrow H^{k+1}(M;\Z)\rightarrow H^{k+1}(M;C^{\infty}(\C))\rightarrow \cdots
\]
However, the sheaves $C^{\infty}(\C)$ are examples of fine sheaves. Their cohomology vanishes for $k\geq 0$, so we obtain isomorphisms
$$c_1:H^k(M;C^{\infty}(\C^*))\simeq H^{k+1}(M;\Z).$$
In particular: a class of smooth line bundles or a gerbe is completely determined by its image under $c_1$, known as the \textit{first Chern class}.

Recall also that Poincar\'e duality defines an isomorphism $H^k(M;\Z)\simeq H_{n-k}(M;\Z)$. Composing these isomorphisms, a class of smooth line bundles or a gerbe is  equivalent to a homology class in the dual dimension. This statement concerns classes. It is an interesting question whether it admits a geometric counterpart, and this is one place where the two cases of line bundles and gerbes differ:
\begin{itemize}
\item 
Let $L$ denote a specific representative of a class in $H^1(M;C^{\infty}(\C^*))$. Choose a smooth section transverse to the zero section. Then its zero set defines a submanifold in $M$ whose homology class in $H_{n-2}(M;\Z)$ is dual to $c_1(L)\in H^2(M;\Z)$.
 
More abstractly: in codimension 2, any homology class can be represented by a smooth embedded submanifold.
\item The abstract statement is false in codimension 3. We must thus distinguish the subset (or the subgroup it generates) whose elements are defined by the property that their corresponding homology class in $H_{n-3}(M;\Z)$ contains a smooth embedded submanifold. We shall refer to these elements as \textit{geometric gerbes. }
\end{itemize}

\subsubsection{Connections.}Recall that any line bundle $L$ admits connections, and any connection $\nabla$ defines a curvature 2-form $R$ on $M$. This 2-form is closed, so (up to factors) it defines a cohomology class $[R]\in H^2(M;\R)$. Chern-Weil theory proves that this class is integral and coincides (up to torsion) with $c_1(L)$. 

Let $L$, $L'$ denote two line bundles. Let $\nabla$, $R$ and $\nabla'$, $R'$ denote connections and curvature 2-forms on $L$, $L'$. There is an induced connection on $L\otimes L'$, whose curvature is $R+R'$. From the Chern-Weil viewpoint, this corresponds to the fact that $c_1$ is a group homomorphism. 

In particular, even if $L'$ is the trivial bundle so that the twisting operation is topologically trivial on $L$, 
if we endow $L'$ with a non-flat connection then we will see the effect of twisting on the curvature of $L$.

There exists an analogous notion of connections and curvature on gerbes. Referring to \cite{HitchinGerbes} for details, we shall just mention that if a gerbe is represented by data $\{U_\alpha,g_{\alpha\beta\gamma}\}$, then a connection corresponds to 2-forms $F_\alpha$ on $U_\alpha$ such that, on $U_\alpha\cap U_\beta$, $F_\beta-F_\alpha$ is exact. It follows that $d(F_\beta)-d(F_\alpha)=0$, so the data $\{d(F_\alpha)\}$ defines a closed global 3-form $G$ on $M$: this is the curvature of the connection. As above, its cohomology class $[G]\in H^3(M;\R)$ is integral and coincides (up to torsion) with $c_1$ of the gerbe.
If we represent the gerbe via a collection of local line bundles, a connection on the gerbe can be viewed as a collection of connections on the line bundles. This shows that the effect on curvature of the twisting operation is, as before, to produce the sum of the two curvatures.

\begin{example}\label{e:gerbeconnection}
Let $M$ be a smooth compact oriented manifold. Let $\mathcal{G}_0$ be the trivial gerbe. Choose any global 2-form $F$ on $M$. Given an open covering of $M$, the restrictions $F_\alpha=F_{|U_\alpha}$ define a connection on $\mathcal{G}_0$ since $F_\beta-F_\alpha=0$ is clearly exact. The curvature is $G=dF$. As expected, $[G]=0\in H^3(M;\Z)$.
\end{example}

\subsection{A geometric source of Bott-Chern classes}
Let us now assume that $M$ is a complex manifold and $L$ is a holomorphic line bundle over $M$. In this case there exists a distinguished class of connections: Chern connections, defined in terms of a choice of Hermitian metric $h$ on $L$. 

From our viewpoint, Chern connections have a key property: the curvature $R$ defines a closed real 2-form $\frac{i}{2\pi}R$ of type $(1,1)$. In particular, this form is both $\partial$- and $\bar\partial$-closed. As predicted by the local $\partial\bar\partial$ lemma, $R$ admits local potential functions. These can be found using the definition of Chern connection. Specifically, given any no-where vanishing local holomorphic section $s$, set $H\coloneqq h(s,s)$. 
Then, locally, $R=\partial\bar\partial(-\log H)$.

Notice that any two metrics $h,h'$ on $L$ are conformal, so they are related by a smooth function $f:M\rightarrow\R$: $h'=e^{-f}h$. We are particularly interested in the following, well-known, fact. 
\begin{prop}\label{prop:BottChern}
Let $L$ be a holomorphic line bundle and $h$ be a Hermitian metric on $L$. Let $R$ be the curvature of its Chern connection. Let $h'=e^{-f}h$ be a second Hermitian metric on $L$. Then the curvature of its Chern connection satisfies $\frac{i}{2\pi}R'=\frac{i}{2\pi}R+\frac{i}{2\pi}\partial\bar\partial f$.
\end{prop}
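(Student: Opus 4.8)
The plan is to compute the Chern curvature in a local holomorphic frame and track how it changes under the conformal rescaling of the metric. First I would recall the defining property of the Chern connection: it is the unique connection compatible with both the holomorphic structure and the Hermitian metric. Concretely, if $s$ is a nowhere-vanishing local holomorphic section and $H \coloneqq h(s,s)$, then the connection $1$-form in this frame is $\theta = \partial \log H = H^{-1}\partial H$, and the curvature is $R = d\theta = \bar\partial\partial \log H = -\partial\bar\partial \log H$. Hence, as recalled in the excerpt, locally $\frac{i}{2\pi}R = \frac{i}{2\pi}\partial\bar\partial(-\log H)$.

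Next I would apply this to the rescaled metric $h' = e^{-f}h$. In the same holomorphic frame $s$, the new local function is $H' = h'(s,s) = e^{-f}H$, so $-\log H' = -\log H + f$. Substituting into the curvature formula gives, locally,
\[
\frac{i}{2\pi}R' = \frac{i}{2\pi}\partial\bar\partial(-\log H') = \frac{i}{2\pi}\partial\bar\partial(-\log H) + \frac{i}{2\pi}\partial\bar\partial f = \frac{i}{2\pi}R + \frac{i}{2\pi}\partial\bar\partial f.
\]
Since $f$ is a globally defined smooth function on $M$ and $R$, $R'$ are globally defined curvature forms, this identity of local expressions patches to the claimed global identity.

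The one point that deserves care — and the main (mild) obstacle — is checking that the local computation is independent of the chosen holomorphic frame, so that the formula genuinely globalizes. Under a change of frame $s \mapsto \lambda s$ with $\lambda$ holomorphic and nowhere zero, $H$ transforms as $H \mapsto |\lambda|^2 H$, so $-\log H$ changes by $-\log|\lambda|^2 = -\log\lambda - \overline{\log\lambda}$, which is pluriharmonic; hence $\partial\bar\partial(-\log H)$ is unchanged, confirming that $R$ (and likewise $R'$) is well-defined globally. The correction term $\partial\bar\partial f$ involves the global function $f$ and is manifestly frame-independent. I would then simply remark that subtracting the two global curvature forms yields $\frac{i}{2\pi}(R' - R) = \frac{i}{2\pi}\partial\bar\partial f$ on all of $M$, which is the assertion. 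This also makes transparent the analogy with the gerbe case discussed above, where the twisting/rescaling shifts the curvature by an exact-in-the-Bott-Chern-sense term $\partial\bar\partial f$, representing the zero class in $H^{1,1}_{BC}(M)$.
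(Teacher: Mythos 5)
Your proof is correct and follows essentially the same route as the paper: the paper's proof consists precisely of substituting $h'=e^{-f}h$ into the local potential formula $R=\partial\bar\partial(-\log H)$ with $H=h(s,s)$, which is exactly your computation $-\log H'=-\log H+f$. Your additional check of frame-independence is sound but not strictly needed, since the curvature forms $R,R'$ are already known to be global objects.
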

\begin{proof}
It suffices to substitute $h$ with $h'$ in the formula for the local potential, given above.
\end{proof}
We can summarize this situation as follows \cite{BottChern}:
\begin{itemize}
\item $c_1(L)$ can be interpreted as a (real) Bott-Chern class.
\item Different elements $\frac{i}{2\pi}R$ within this class correspond to the curvatures obtained via different Hermitian metrics on $L$.
\end{itemize}
Changing metric thus provides a perfect geometric counterpart to the analytic process of changing representative in $c_1(L)$, viewed as a Bott-Chern class.

\begin{remark}\label{rem:BottChern}
\cite{BottChern} refers to $c_1(L)$, viewed as an element of $H^{1,1}_{BC}(M)$, as a ``refined Chern class". This language emphasizes the restriction to perturbations of the form $i\partial\bar\partial f$, rather than all exact perturbations.
\end{remark}

We would like to extend this construction to $G_2$ manifolds. In light of Section \ref{s:cddlocal}, it is necessary to avoid local potentials. Let us thus consider the following special case of the above theory.

\begin{example}\label{e:holtrivial}
Let $L_0:=M\times\C$ denote the trivial holomorphic line bundle. Choose any metric $h$. 
Let $s$ denote a global no-where vanishing holomorphic section. Then the Chern connection has  curvature $\partial\bar\partial(-\log H)$, where $H\coloneqq h(s,s)$. It thus admits a global potential.
\end{example}

Coupling this with the twisting operation provides an alternative to Proposition \ref{prop:BottChern}.

\begin{prop}\label{prop:BottChern_revised}
Let $L$ be a holomorphic line bundle and $h$ be a Hermitian metric on it. Let $R$ be the curvature of its Chern connection. 
Let $L_0\coloneqq M\times\C$ denote the trivial holomorphic line bundle, endowed with the constant metric $h_0$. 
Choose any $f:M\rightarrow\R$. If we twist $(L,h)$ with $(L_0,e^{-f}h_0)$, the curvature of the corresponding Chern connection on $L=L\otimes L_0$ 
satisfies $\frac{i}{2\pi}R'=\frac{i}{2\pi}R+\frac{i}{2\pi}\partial\bar\partial f$.
\end{prop}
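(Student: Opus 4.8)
The plan is to deduce the statement from two ingredients already available: the additivity of curvature under the twisting (tensor) operation, and the explicit global-potential formula for the Chern connection of the trivial bundle recalled in Example \ref{e:holtrivial}.

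First I would invoke the standard fact, noted in the text, that if $(L,h)$ and $(L',h')$ are Hermitian holomorphic line bundles whose Chern connections have curvatures $R$ and $R'$, then the Chern connection of $(L\otimes L',\,h\otimes h')$ is the tensor product connection, and its curvature is $R+R'$. The reason is that the tensor product of two connections, each compatible with the corresponding holomorphic structure and Hermitian metric, is again compatible with the tensor holomorphic structure and the tensor metric; since the Chern connection is the unique connection with these two properties, it must coincide with this tensor product, and for line bundles curvatures simply add.

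Next I would compute the curvature of the second factor $(L_0,\,e^{-f}h_0)$, where $L_0=M\times\C$ is the trivial holomorphic line bundle and $h_0$ is the constant metric normalized so that $h_0(1,1)=1$. The constant function $s\equiv 1$ is a global nowhere-vanishing holomorphic section, so Example \ref{e:holtrivial} applies with $H\coloneqq (e^{-f}h_0)(s,s)=e^{-f}$: the curvature of the Chern connection of $(L_0,e^{-f}h_0)$ is $\partial\bar\partial(-\log H)=\partial\bar\partial(-\log e^{-f})=\partial\bar\partial f$.

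Finally, under the canonical identification $L\otimes L_0\cong L$, adding the two curvatures shows that the curvature $R'$ of the Chern connection of the twisted bundle equals $R+\partial\bar\partial f$; multiplying by $\frac{i}{2\pi}$ gives the claim. No genuine obstacle arises here; the one point needing a little care is the bookkeeping of conventions — the normalization $h_0(1,1)=1$, the sign in $h'=e^{-f}h$, and the factor $\frac{i}{2\pi}$ — together with ensuring that additivity of curvature is used for Chern connections specifically, not for arbitrary connections. One could of course also simply observe that, under $L\otimes L_0\cong L$, the metric $h\otimes(e^{-f}h_0)$ corresponds to the rescaled metric $e^{-f}h$ on $L$, so the statement is literally Proposition \ref{prop:BottChern}; the purpose of the present reformulation is that the extra term $\partial\bar\partial f$ is now produced entirely by a connection on the topologically trivial factor $L_0$, and it is this feature that is meant to carry over to the gerbe setting.
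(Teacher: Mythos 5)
Your argument is correct and follows essentially the same route as the paper: both compute the curvature of the twisted trivial factor $(L_0,e^{-f}h_0)$ via the global potential $H=e^{-f}$ from Example \ref{e:holtrivial}, obtaining $\partial\bar\partial f$, and then invoke additivity of curvature under the twisting operation. The extra detail you supply on why Chern connections tensor correctly is a welcome, if standard, elaboration of what the paper leaves implicit.
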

\begin{proof}
Let $s$ be a global non-vanishing holomorphic section of $L_0$ such that $h_0(s,s)=1$. 
As in Example \ref{e:holtrivial}, the corresponding Chern connection has curvature $0$. 
If we substitute $h_0$ with $e^{-f}h_0$ and use the properties of the twisting operation, 
we obtain the desired result.
\end{proof}

\begin{remark}
Notice that $c_1(L)$ is an integral class, {\em i.e.}, it belongs to $H^2(M;\Z)$. This implies that changing $L$ spans only a subgroup of Bott-Chern cohomology.

Line bundles admitting holomorphic sections typically generate an even smaller subgroup. In this case, the Poincar\'e dual of $c_1(L)$ can be represented by the codimension 1 complex submanifold (actually, divisor) determined by the zero set of such a section.
\end{remark}

\subsubsection{The role of the global $\partial\bar\partial$ lemma.}
As discussed, on a general complex manifold the relationship between Bott-Chern and de Rham cohomology is unclear. However, if we assume that $M$ is a compact K\"ahler manifold, the global $\partial\bar\partial$ lemma implies that the map from Bott-Chern to de Rham cohomology is injective. We can reformulate this as follows. 

Any given closed $(1,1)$ form defines a Bott-Chern class. In general, it is non-obvious exactly which other closed $(1,1)$ forms belong to that same class. The global $\partial\bar\partial$ lemma allows us to address this issue: any other closed $(1,1)$ form in the same de Rham class belongs to the same Bott-Chern class. We can use this in the following way. 

As already mentioned, curvatures of Chern connections are of type $(1,1)$, closed, and in the de Rham class $c_1(L)$. The global $\partial\bar\partial$ lemma provides the converse statement: any such form $R'$ is the curvature with respect to some metric. 
Indeed, choose a generic metric $h$ on $L$. Let $R$ be its curvature. The $\partial\bar\partial$ lemma implies that $\frac{i}{2\pi}R'=\frac{i}{2\pi}R+\frac{i}{2\pi}\partial\bar\partial f$, for some $f$. According to Proposition \ref{prop:BottChern}, the metric $h' \coloneqq e^{-f}h$ then has curvature $R'$.

Together with Proposition \ref{prop:BottChern_revised} and K\"ahler Hodge theory, this implies the following.

\begin{cor}\label{cor:BottChern}
Let $M$ be a compact K\"ahler manifold and $L$ be a holomorphic line bundle.
\begin{enumerate}
\item There exists a unique harmonic $(1,1)$-form $\sigma\in {\mathcal{H}^{1,1}_\R}$ in the class $c_1(L)$. 
This form is the curvature of the Chern connection on $L$, for an appropriate metric $h$ on $L$.
\item Any other element in the Bott-Chern class $c_1(L)$ has the form $\sigma+\frac{i}{2\pi}\partial\bar\partial f$, for some $f$. 
These elements coincide with the curvatures obtained by twisting $L$, endowed with the fixed metric $h$, 
with the trivial holomorphic line bundle $L_0$ endowed with different Hermitian metrics.
\end{enumerate}
\end{cor}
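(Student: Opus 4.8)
The plan is to assemble the statement from three ingredients already in place: Kähler Hodge theory (existence and uniqueness of a harmonic representative in a de Rham class), the global $\partial\bar\partial$ lemma in the real $(1,1)$-form recorded in Corollary \ref{cor:ddomega}, and propositions \ref{prop:BottChern} and \ref{prop:BottChern_revised} describing how the curvature of a Chern connection changes under a conformal change of metric, respectively under twisting by the trivial bundle. Nothing new has to be built; the task is to glue these pieces together with the correct type- and reality-bookkeeping.

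For part 1, I would begin with an arbitrary Hermitian metric $h_0$ on $L$ and let $R_0$ be the curvature of its Chern connection; by Chern--Weil theory $\frac{i}{2\pi}R_0$ is a closed real $(1,1)$-form in the de Rham class $c_1(L)$. Since on a compact Kähler manifold the Hodge Laplacian preserves types, the unique harmonic representative $\sigma$ of $c_1(L)$ lies in $\mathcal{H}^2(M)\cap\Lambda^{1,1}_\R(M)=\mathcal{H}^{1,1}_\R$; this yields both the existence and the uniqueness of $\sigma$. To realise $\sigma$ as a Chern curvature, note that $\sigma-\frac{i}{2\pi}R_0$ is an exact real $(1,1)$-form, so Corollary \ref{cor:ddomega} furnishes a real function $f$ with $\sigma-\frac{i}{2\pi}R_0=\frac{i}{2\pi}\partial\bar\partial f$ (the normalising constants being absorbed into $f$). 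Proposition \ref{prop:BottChern} then shows that the metric $h\coloneqq e^{-f}h_0$ has Chern curvature $R$ satisfying $\frac{i}{2\pi}R=\frac{i}{2\pi}R_0+\frac{i}{2\pi}\partial\bar\partial f=\sigma$, which is what is wanted.

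For part 2, fix the metric $h$ produced in part 1, so that $\frac{i}{2\pi}R=\sigma$. By Proposition \ref{prop:BC} the Bott-Chern class $c_1(L)$ injects into de Rham cohomology, so an element of this Bott-Chern class is a closed real $(1,1)$-form $\beta$ whose de Rham class is $c_1(L)$; then $\beta-\sigma$ is exact real $(1,1)$ and Corollary \ref{cor:ddomega} again gives $\beta=\sigma+\frac{i}{2\pi}\partial\bar\partial f$ for some real $f$, while conversely every such form is closed, of type $(1,1)$, and de Rham-cohomologous to $\sigma$, hence in the same Bott-Chern class. Finally, Proposition \ref{prop:BottChern_revised} identifies $\sigma+\frac{i}{2\pi}\partial\bar\partial f$ with the curvature of the Chern connection obtained by twisting $(L,h)$ with the trivial holomorphic line bundle $L_0$ carrying the metric $e^{-f}h_0$; letting $f$ range over $\Lambda^0(M)$ exhausts the whole Bott-Chern class.

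The bulk of the argument is routine type- and reality-bookkeeping together with the Chern--Weil normalisation. The one point that needs genuine care — and the reason the argument is routed through Corollary \ref{cor:ddomega} rather than the bare $\partial\bar\partial$ lemma of Section \ref{s:dedebar} — is ensuring that the potential $f$ can be chosen real-valued, since Corollary \ref{cor:ddomega} is precisely the version of the lemma phrased for real forms and real potentials. A secondary subtlety, easily dispatched, is pinning down that ``the Bott-Chern class $c_1(L)$'' means the preimage of the integral class $c_1(L)\in H^2(M;\Z)$ under the injection $H^{1,1}_{BC}(M;\R)\hookrightarrow H^2(M;\R)$ of Proposition \ref{prop:BC}.
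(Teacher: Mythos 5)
Your proposal is correct and follows essentially the same route as the paper, which derives the corollary from exactly these ingredients: K\"ahler Hodge theory for the harmonic representative, the global $\partial\bar\partial$ lemma in the real form of Corollary \ref{cor:ddomega}, and propositions \ref{prop:BottChern} and \ref{prop:BottChern_revised} for realizing the potentials geometrically via a conformal change of metric and via twisting by $(L_0,e^{-f}h_0)$. The only cosmetic issue is that you reuse the symbol $h_0$ both for the initial metric on $L$ and for the constant metric on $L_0$, but this does not affect the argument.
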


\begin{remark}
If we change the K\"ahler metric on $M$ we will obtain a different harmonic form $\sigma'\in c_1(L)$. Since it is in the same de Rham class as $\sigma$, the global $\partial\bar\partial$ lemma implies that $\sigma'=\sigma+\frac{i}{2\pi}\partial\bar\partial f'$, for some $f'$. The two forms thus belong to the same Bott-Chern class, as expected.
\end{remark}

\subsection{A geometric viewpoint on classes in $H^\varphi(M)$} Let $(M,\varphi)$ be a compact torsion-free $G_2$ manifold. Let $H^\varphi(M)$ denote its $dd^\varphi$-cohomology space. As already mentioned, the theory of calibrated geometry shows there are  some features in common with K\"ahler manifolds. Here, we are interested in the dual calibration $\psi=\star\varphi$ and in the corresponding class of 4-dimensional \textit{coassociative submanifolds} $C\subseteq M$, defined by the condition $\psi_{|C}=\vol_C$. We shall think of them as analogues of the calibration $\star\omega$ and of the class of complex divisors $D$, characterized by the condition $\star\omega_{|D}=\vol_D$. It is then natural to think of the geometric gerbe $\mathcal{G}$ defined by $C$ as the analogue of the holomorphic line bundle $L$ defined by $D$.

Although we have no analogue for gerbes of the class of Chern connections, there is an obvious refinement of Example \ref{e:gerbeconnection} which has the same flavour as Example \ref{e:holtrivial}.

\begin{example}\label{e:G2gerbeconnection}
Let $(M,\varphi)$ be a compact torsion-free $G_2$ manifold. Let $\mathcal{G}_0$ be the trivial gerbe. Choose any global function $f$ on $M$. Then $F\coloneqq d^\varphi f$ defines a connection on $\mathcal{G}_0$, with curvature $dd^\varphi f$.
\end{example}

Our goal is to obtain an analogue of Corollary \ref{cor:BottChern}. Let us start by defining the analogous harmonic 3-form (see also \cite{GoncaloGerbes}).

\begin{prop}\label{prop:coass}
Let $(M,\varphi)$ be a compact torsion-free $G_2$ manifold and $C$ be a compact coassociative submanifold. Let $[C]\in H_4(M;\Z)$ denote the corresponding homology class and $\mathcal{G}$ denote the corresponding gerbe. Then the harmonic $3$-form $\sigma$ in $c_1(\mathcal{G})$ belongs to $\mathcal{H}^3_1\oplus \mathcal{H}^3_{27}$
and defines an element $[\sigma]_\varphi\in H^{\varphi}(M)$. The class $[\sigma]_\varphi$ corresponds to $c_1(\mathcal{G})$ via the immersion $H^\varphi(M)\rightarrow H^3(M;\R)$. 
\end{prop}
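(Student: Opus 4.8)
The plan is, writing $\sigma$ for the harmonic representative of (the real de Rham class underlying) $c_1(\mathcal{G})$ as in the statement, to verify three things: first, that $\sigma$ lies in the space $K=\left(\Lambda^3_1(M)\oplus\Lambda^3_{27}(M)\right)\cap\Ker(d)\cap\Ker(\pi^2_{14}d^*)$ of Theorem \ref{thm:G2Hodge}; second, that the only non-trivial point in this rests on coassociativity; third, that the resulting class $[\sigma]_\varphi\in H^\varphi(M)=K/\Im(dd^\varphi)$ maps, under the injection of Corollary \ref{cor:G2coh}, to $[\sigma]=c_1(\mathcal{G})$. Since $\sigma$ is harmonic it is closed and coclosed, so $\sigma\in\Ker(d)\cap\Ker(d^*)\subseteq\Ker(d)\cap\Ker(\pi^2_{14}d^*)$, and membership in $K$ reduces to showing $\pi^3_7\sigma=0$. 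Because $\Delta$ preserves the $G_2$ type decomposition, $\sigma_7\coloneqq\pi^3_7\sigma$ is a harmonic form in $\Lambda^3_7(M)$; I would write $\sigma_7=\star(\alpha\wedge\varphi)$ for the corresponding $\alpha\in\Lambda^1(M)$ and use Bryant's formulae (Proposition \ref{propBryantFormulae}) to express $d\sigma_7$ and $d^*\sigma_7$ in terms of the $G_2$-derivatives of $\alpha$, together with the characterisation of harmonic $1$-forms by vanishing of those derivatives recalled earlier (see also \cite[Ch.~3]{Joyce}), to conclude that $\alpha$, hence $\star\alpha$, is harmonic.

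The key step invokes the coassociative condition $\varphi|_C=0$ (equivalently $\psi|_C=\vol_C$). Because $[\sigma]$ is Poincar\'e dual to $[C]$, for every closed $1$-form $\beta$ the closed $4$-form $\varphi\wedge\beta$ gives
\[
\int_M (\sigma\wedge\varphi)\wedge\beta = \int_C (\varphi\wedge\beta)|_C = \int_C (\varphi|_C)\wedge(\beta|_C) = 0,
\]
so non-degeneracy of the Poincar\'e pairing $H^6(M;\mathbb{R})\times H^1(M;\mathbb{R})\to\mathbb{R}$ yields $[\sigma\wedge\varphi]=0$. On the other hand $\sigma_1\wedge\varphi=0$ (odd degree), $\sigma_{27}\wedge\varphi=0$ by the definition of $\Lambda^3_{27}(M)$, and $\sigma_7\wedge\varphi=\star(\alpha\wedge\varphi)\wedge\varphi=-4\star\alpha$ by \eqref{G2id1form}; hence $\sigma\wedge\varphi=-4\star\alpha$ is a harmonic form in the zero cohomology class, therefore zero. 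This forces $\alpha=0$, so $\sigma_7=0$ and $\sigma\in\Lambda^3_1(M)\oplus\Lambda^3_{27}(M)$; being harmonic, $\sigma\in\mathcal{H}^3_1\oplus\mathcal{H}^3_{27}\subseteq K$, and $[\sigma]_\varphi\in H^\varphi(M)$ is well-defined.

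For the last point, the injection $H^\varphi(M)\to H^3(M;\mathbb{R})$ of Corollary \ref{cor:G2coh} is by definition $[\alpha]_\varphi\mapsto[\alpha]$, so $[\sigma]_\varphi\mapsto[\sigma]$, which is $c_1(\mathcal{G})$ by the choice of $\sigma$; this closes the argument. The step I expect to be the real obstacle is the harmonicity claim used twice above: that $\sigma_7$ harmonic implies $\alpha$ harmonic, and hence that $\sigma\wedge\varphi=-4\star\alpha$ is harmonic. The wedge of a harmonic form with a parallel form need not be harmonic in general, so this genuinely uses the representation theory of torsion-free $G_2$-structures (the identification of harmonic $\Lambda^3_7$-forms with harmonic $1$-forms); without it the argument collapses. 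The only other delicate points are getting the normalisation $\star(\alpha\wedge\varphi)\wedge\varphi=-4\star\alpha$ and the sign conventions in Poincar\'e duality right, neither of which affects the conclusion.
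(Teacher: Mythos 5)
Your proposal is correct, but the central computation runs along a genuinely different track from the paper's. The paper also decomposes $\sigma=\sigma^1+\sigma^7+\sigma^{27}$ into harmonic components and exploits $\varphi|_C=0$, but it then tests Poincar\'e duality against the single closed $4$-form $\star\sigma^7$ itself: since $\star\sigma^7\in\Lambda^4_7(M)$ is of the form $\alpha'\wedge\varphi$ pointwise, its restriction to $C$ vanishes, while $\int_C\star\sigma^7=\int_M\star\sigma^7\wedge\sigma=\|\sigma^7\|_{L^2}^2$ by orthogonality of types; this kills $\sigma^7$ in one line and uses nothing beyond the closedness of $\star\sigma^7$ and the algebraic description of $\Lambda^4_7$. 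You instead test against all closed $1$-forms to get $[\sigma\wedge\varphi]=0$ in $H^6(M;\R)$, identify $\sigma\wedge\varphi=-4\star\alpha$ via \eqref{G2id1form}, and then need the additional input that $\sigma^7$ harmonic forces $\alpha$ harmonic (the isomorphism $\mathcal{H}^3_7\simeq\mathcal{H}^1$ coming from Bryant's formulae, or from Joyce's refined Hodge theory for torsion-free $G$-structures) so that the harmonic representative $-4\star\alpha$ of the zero class vanishes. That step is valid on a compact torsion-free $G_2$ manifold and you correctly flag it as the load-bearing one, so your argument closes; but note that the paper's choice of test form makes it unnecessary, which is the main economy of their proof. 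Your handling of the remaining points ($\sigma$ coclosed hence in $\ker(\pi^2_{14}d^*)$, and the tautological compatibility with the map $[\alpha]_\varphi\mapsto[\alpha]$ of Corollary \ref{cor:G2coh}) matches the paper.
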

\begin{proof}
Coassociative submanifolds can be characterized by the condition $\varphi_{|C}=0$. Recall also that $c_1(\mathcal{G})$ is the Poincar\'e dual of $[C]$. The fact that the Laplacian preserves types provides a decomposition $\sigma=\sigma^1+\sigma^7+\sigma^{27}$ into harmonic components. In particular, $\alpha\coloneqq \star\sigma^7\in\Lambda^4_7$ is closed. Recall that such forms can be written as $\alpha=\alpha'\wedge\varphi$, for some $\alpha'\in\Lambda^1$. Thus, using the coassociative condition and Poincar\'e duality,
$$0=\int_C\star\sigma^7=\int_M\star\sigma^7\wedge\sigma=\int_M|\star\sigma^7|^2\vol_g.$$
This shows that $\sigma^7=0$. Since $\sigma$ is coclosed, it belongs to $\ker(\pi^2_{14}d^*)$ so it defines a class in $H^\varphi(M)$. 
\end{proof}

\begin{remark}
Furthermore, write $\sigma=\sigma^1+\sigma^{27}=c\varphi+\sigma^{27}$. Then, using $d\psi=0$, 
$$\Vol(C)=\int_C\psi=\int_M\psi\wedge\sigma=7c\,\int_M\vol_g = 7c\,\Vol(M),$$
so $c$ is simply the factor relating the two volumes. This shows that $c$ is positive.
\end{remark}

We now want to associate a connection to this harmonic 3-form. We will rely on the following, general, result \cite{HitchinGerbes}.

\begin{prop}\label{p:Hitchin}
Let $(M,g)$ be a compact smooth oriented Riemannian manifold and $\mathcal{G}\in H^2(M;C^{\infty}(\C^*))$ be a geometric gerbe. 
Then any closed $3$-form representing $c_1(\mathcal{G})\in H^3(M;\Z)$ is the curvature of some connection on $\mathcal{G}$.

In particular, the harmonic $3$-form $\sigma\in c_1(\mathcal{G})$ is the curvature of some connection on $\mathcal{G}$.
\end{prop}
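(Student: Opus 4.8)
The plan is to deduce everything from the one substantive fact already recorded in the text, namely that the de Rham class of the curvature of \emph{any} connection on a gerbe $\mathcal{G}$ equals the image of $c_1(\mathcal{G})$ in $H^3(M;\R)$; granting this, the rest is a cohomological perturbation. First I would fix one auxiliary connection to work with. By \cite{HitchinGerbes} every gerbe admits a connection: representing $\mathcal{G}$ by data $\{U_\alpha,g_{\alpha\beta\gamma}\}$ and using that the sheaves of smooth forms are fine, one successively solves the \v{C}ech problems $\delta A = g^{-1}dg$ and $\delta F^0 = dA$ for $1$-forms $A_{\alpha\beta}$ on $U_\alpha\cap U_\beta$ and $2$-forms $F^0_\alpha$ on $U_\alpha$; the local $3$-forms $dF^0_\alpha$ then patch to a global closed $3$-form $G_0$, the curvature. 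By the quoted fact, $[G_0]=c_1(\mathcal{G})$ in $H^3(M;\R)$.

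Next, given a closed $3$-form $G$ whose de Rham class is the image of $c_1(\mathcal{G})$ in $H^3(M;\R)$, I would observe that $G-G_0$ is closed and de Rham exact, so by the de Rham theorem $G-G_0=d\beta$ for some global $\beta\in\Lambda^2(M)$. I would then modify the connection by shifting the curving: replace $F^0_\alpha$ by $F_\alpha\coloneqq F^0_\alpha+\beta|_{U_\alpha}$, keeping the cocycle $g_{\alpha\beta\gamma}$ and the $1$-forms $A_{\alpha\beta}$ unchanged. Since $\beta$ is globally defined, the overlap conditions $F_\beta-F_\alpha=F^0_\beta-F^0_\alpha=dA_{\alpha\beta}$ are unaffected, so $\{F_\alpha\}$ is again a connection on $\mathcal{G}$, and its curvature is $dF_\alpha=dF^0_\alpha+d\beta=G_0+d\beta=G$. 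This proves the first assertion. (Geometricity of $\mathcal{G}$ plays no role here; it is only needed elsewhere, for the Poincar\'e-dual description of gerbes.)

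For the ``in particular'' part I would invoke Hodge theory on the compact oriented Riemannian manifold $(M,g)$: the isomorphism $H^3(M;\R)\cong\mathcal{H}^3$ shows that the image of $c_1(\mathcal{G})$ in $H^3(M;\R)$ has a unique harmonic representative $\sigma\in\mathcal{H}^3$. In particular $\sigma$ is closed with de Rham class equal to that image, so the first assertion applies with $G\coloneqq\sigma$.

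The only step carrying genuine content is the Chern--Weil identity $[G_0]=c_1(\mathcal{G})$ for gerbes, which has already been granted in the text and is proved in \cite{HitchinGerbes}; apart from that, the one mild point to verify is that perturbing the curving by a \emph{global} $2$-form leaves the connective-structure data $\{A_{\alpha\beta}\}$ intact, which is immediate since the \v{C}ech differential of a globally defined form vanishes.
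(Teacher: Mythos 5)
Your argument is correct. The paper itself offers no proof of this proposition --- it is quoted directly from \cite{HitchinGerbes} --- so there is no ``paper's route'' to compare against in detail; what you have written is the standard and complete argument. The two substantive inputs are exactly the ones you isolate: (i) existence of at least one connection, obtained by successively solving the \v{C}ech problems via fineness of the sheaves of smooth forms, together with the Chern--Weil identity $[G_0]=c_1(\mathcal{G})$ in $H^3(M;\R)$, which the paper records as known; and (ii) the observation that replacing the curvings $F^0_\alpha$ by $F^0_\alpha+\beta|_{U_\alpha}$ for a global $2$-form $\beta$ preserves the overlap conditions (with the paper's definition, $F_\beta-F_\alpha$ exact, this is immediate) and shifts the curvature by $d\beta$. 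Combined with the de Rham theorem and, for the final clause, the Hodge isomorphism $H^3(M;\R)\cong\mathcal{H}^3$, this yields the full statement. Your side remark is also accurate and worth keeping in mind: geometricity of $\mathcal{G}$ is not needed for this surjectivity-of-curvature statement; it enters the paper only through Poincar\'e duality and the representation of $c_1(\mathcal{G})$ by a coassociative submanifold, and in Hitchin's more constructive treatment (where the metric $g$ is used, e.g.\ via Green's operators, to build a distinguished connection adapted to the submanifold --- the point alluded to in the sentence following the proposition). The only cosmetic caveat is the normalization in $\delta A=g^{-1}dg$, which should carry the usual $\tfrac{1}{2\pi i}d\log$ factor to match the integrality of $c_1$; this does not affect the argument.
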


The initial choice of $g$ serves both to define harmonic forms and for the construction of this connection.

This leads to the following result, analogous to Corollary \ref{cor:BottChern}.

\begin{cor}\label{cor:G2BottChern}
Let $(M,\ph)$ be a compact torsion-free $G_2$ manifold. Let $C$ be a compact coassociative submanifold and $\mathcal{G}$ be the corresponding gerbe.
\begin{enumerate}
\item There exists a unique harmonic $3$-form $\sigma\in \mathcal{H}^3_1\oplus\mathcal{H}^3_{27}$ in the class $c_1(\mathcal{G})$. 
This form is the curvature of a connection on $\mathcal{G}$.
\item Any other element in the $dd^\varphi$-class $[\sigma]_\varphi=c_1(\mathcal{G})$ has the form $\sigma+dd^\varphi f$, for some $f$. These elements coincide with the curvatures obtained by twisting $\mathcal{G}$ with the trivial gerbe $\mathcal{G}_0$, endowed with the connection of Example \ref{e:G2gerbeconnection}.
\end{enumerate}
\end{cor}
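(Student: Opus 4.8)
The plan is to deduce Corollary \ref{cor:G2BottChern} from a combination of three earlier ingredients: Proposition \ref{prop:coass} (which produces a harmonic $3$-form $\sigma\in\mathcal{H}^3_1\oplus\mathcal{H}^3_{27}$ in the de Rham class $c_1(\mathcal{G})$), Corollary \ref{cor:G2coh} (the injection $H^\varphi(M)\hookrightarrow H^3(M;\R)$ and the isomorphism $H^\varphi(M)\simeq\mathcal{H}^3_1\oplus\mathcal{H}^3_{27}$), and Proposition \ref{p:Hitchin} (that any closed $3$-form in $c_1(\mathcal{G})$, in particular the harmonic one, is the curvature of a connection on $\mathcal{G}$). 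The structure of the argument mirrors exactly the proof of Corollary \ref{cor:BottChern} in the K\"ahler case, with $dd^\varphi$ replacing $i\partial\bar\partial$, $\mathcal{H}^3_1\oplus\mathcal{H}^3_{27}$ replacing $\mathcal{H}^{1,1}_\R$, and gerbes replacing holomorphic line bundles.

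For part 1, I would argue as follows. Standard Hodge theory gives a \emph{unique} harmonic $3$-form in the de Rham class $c_1(\mathcal{G})\in H^3(M;\R)$; Proposition \ref{prop:coass} shows that this form, call it $\sigma$, actually lies in the subspace $\mathcal{H}^3_1\oplus\mathcal{H}^3_{27}$ (using the coassociative condition $\varphi|_C=0$ to kill the $\mathcal{H}^3_7$ component). Uniqueness within $\mathcal{H}^3_1\oplus\mathcal{H}^3_{27}$ is then immediate, being a fortiori uniqueness within $\mathcal{H}^3$. The curvature statement is exactly Proposition \ref{p:Hitchin} applied to $\sigma$.

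For part 2, the key input is the injectivity of $H^\varphi(M)\to H^3(M;\R)$ from Corollary \ref{cor:G2coh}, which plays the role that the global $\partial\bar\partial$ lemma plays in the K\"ahler case. Concretely: suppose $\sigma'$ is any closed form in $\Lambda^3_1(M)\oplus\Lambda^3_{27}(M)$ with $\pi^2_{14}d^*(\sigma')=0$ (i.e.\ $\sigma'\in K$) lying in the de Rham class $c_1(\mathcal{G})$. Then $\sigma'-\sigma$ is $d$-exact and lies in $K$, so by Corollary \ref{cor:G2globallemma} (the global $dd^\varphi$ lemma) it is of the form $dd^\varphi f$ for some $f\in\Lambda^0(M)$; hence $\sigma'=\sigma+dd^\varphi f$. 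Conversely any $\sigma+dd^\varphi f$ lies in $K$ (by Lemma \ref{l:ddphi_features}) and is de Rham cohomologous to $\sigma$, hence represents $c_1(\mathcal{G})$, and by Proposition \ref{p:Hitchin} it too is the curvature of a connection on $\mathcal{G}$. To identify these connections geometrically I would invoke the twisting operation: the trivial gerbe $\mathcal{G}_0$ with the connection $F_\alpha=(d^\varphi f)|_{U_\alpha}$ of Example \ref{e:G2gerbeconnection} has curvature $dd^\varphi f$, and since twisting adds curvatures (as recalled in the subsection on connections on gerbes, using the local-line-bundle description), twisting $\mathcal{G}$ (with the connection of curvature $\sigma$) by this $(\mathcal{G}_0,F)$ yields a connection on $\mathcal{G}\otimes\mathcal{G}_0=\mathcal{G}$ with curvature $\sigma+dd^\varphi f$, which exhausts all elements of the $dd^\varphi$-class.

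The main obstacle — to the extent there is one — is bookkeeping rather than mathematics: one must be careful that the "$dd^\varphi$-class $[\sigma]_\varphi$" referred to in the statement is genuinely well-defined (this is precisely what Proposition \ref{prop:coass} and the complex \eqref{eq:G2complex} guarantee, since $\sigma\in K$) and that the identification $[\sigma]_\varphi=c_1(\mathcal{G})$ is compatible with the injection $H^\varphi(M)\to H^3(M;\R)$. Everything else is a direct transcription of the K\"ahler argument, the one genuinely new feature being that local potentials are unavailable (Section \ref{s:cddlocal}) so the twisting construction via the \emph{global} Example \ref{e:G2gerbeconnection} is essential — this is the $G_2$ analogue of passing from Proposition \ref{prop:BottChern} to Proposition \ref{prop:BottChern_revised}.
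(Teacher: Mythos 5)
Your proposal is correct and assembles exactly the ingredients the paper intends: Proposition \ref{prop:coass} for placing the harmonic representative in $\mathcal{H}^3_1\oplus\mathcal{H}^3_{27}$, Proposition \ref{p:Hitchin} for realizing it as a curvature, Corollary \ref{cor:G2globallemma}/Corollary \ref{cor:G2coh} in the role of the global $\partial\bar\partial$ lemma, and the twisting construction of Example \ref{e:G2gerbeconnection} for part 2. The paper states the corollary without a formal proof precisely because it follows by this direct transcription of the argument for Corollary \ref{cor:BottChern}, so your write-up matches the intended route.
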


As a final remark, notice that in Corollary \ref{cor:BottChern} the potential $f$ was incorporated into the geometry by using it to conformally change the metric on $L_0$. This does not have a counterpart in Corollary \ref{cor:G2BottChern}. We can however consider the following variation.

Let us restrict our attention to positive functions $f:M\rightarrow \R$. The choice of $f$ defines a new $G_2$ structure $\varphi_f\coloneqq f^3\varphi$.
Let $g_f=f^2g$ denote the corresponding metric and $\star_f$ the corresponding Hodge operator. 

\begin{lem}Let $\varphi$ be torsion-free. Let $\{\varphi_f\}$ denote its conformal class. Then:
\begin{enumerate}[(1)]
\item $d^{\star_f}\varphi=d^\ph(\tfrac12 f^{-2})$.\\ 
In particular, $\Delta^{g_f}\varphi=dd^{\star_f}\varphi = dd^\ph(\tfrac12 f^{-2})$.
\item A submanifold $C$ is coassociative with respect to $\varphi$ if and only if it is coassociative with respect to any $\varphi_f\in \{\varphi_f\}$.
\end{enumerate}
\end{lem}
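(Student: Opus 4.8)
The plan is first to read $d^{\star_f}$ as the codifferential with respect to the rescaled metric $g_f$: this is the only reading consistent with the ``in particular'' clause, since the $g_f$-Hodge Laplacian is $\Delta^{g_f}=d\,d^{\star_f}+d^{\star_f}d$, so $d\varphi=0$ forces $\Delta^{g_f}\varphi=d\,d^{\star_f}\varphi$. Granting this, part (1) is a short conformal computation. I would record the two weights I need: because $g_f=f^2g$, the Hodge operator obeys $\star_f=f^{7-2k}\star$ on $k$-forms (here $\star=\star_g$), so $\star_f\varphi=f\psi$ in degree $3$ and $\star_f=f^{-3}\star$ in degree $5$. Since the structure is torsion-free, $d\psi=0$, and hence
\[
d^{\star_f}\varphi=-\star_f\,d\star_f\varphi=-\star_f\,d(f\psi)=-\star_f(df\wedge\psi)=-f^{-3}\star(df\wedge\psi).
\]
Now apply identity \eqref{eqalphahookphi} with $\alpha=df$: $\star(df\wedge\psi)=(df)^\sharp\lrcorner\varphi=\nabla f\lrcorner\varphi=d^\varphi f$, so $d^{\star_f}\varphi=-f^{-3}(\nabla f\lrcorner\varphi)$. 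On the other hand $\nabla(\tfrac12 f^{-2})=-f^{-3}\nabla f$, whence $d^\varphi(\tfrac12 f^{-2})=\nabla(\tfrac12 f^{-2})\lrcorner\varphi=-f^{-3}(\nabla f\lrcorner\varphi)$, and comparing the two expressions gives $d^{\star_f}\varphi=d^\varphi(\tfrac12 f^{-2})$. The ``in particular'' statement then follows at once: $d\varphi=0$ gives $\Delta^{g_f}\varphi=d\,d^{\star_f}\varphi+d^{\star_f}d\varphi=d\,d^{\star_f}\varphi=dd^\varphi(\tfrac12 f^{-2})$.

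For part (2) I would invoke the pointwise characterization of coassociativity used in the proof of Proposition \ref{prop:coass}: a $4$-dimensional submanifold $C$ is coassociative for a $G_2$-structure $\varphi$ precisely when $\varphi|_C=0$. Since $\varphi_f=f^3\varphi$ and $f>0$, we have $\varphi_f|_C=f^3\,\varphi|_C$, which vanishes identically on $C$ if and only if $\varphi|_C$ does; hence $C$ is coassociative for $\varphi$ if and only if it is coassociative for every $\varphi_f\in\{\varphi_f\}$. If one prefers to argue via the calibration form directly, one tracks the weights $\psi_f=\star_f\varphi_f=f^4\psi$ and $\mathrm{vol}_{g_f}|_C=f^4\,\mathrm{vol}_g|_C$ and observes that the equality $\psi_f|_C=\mathrm{vol}_{g_f}|_C$ is equivalent to $\psi|_C=\mathrm{vol}_g|_C$.

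I do not expect a genuine obstacle: the substance is the single identity \eqref{eqalphahookphi} plus careful bookkeeping of conformal weights and signs. The two points needing care are (a) fixing that $d^{\star_f}$ is the $g_f$-codifferential, with the sign $-\star_f d\star_f$ on $3$-forms dictated by the convention $\Delta=dd^*+d^*d$, and (b) applying the weight $f^{7-2k}$ in the correct degrees ($k=3$ for $\star_f\varphi$, then $k=5$ for $\star_f$ acting on $df\wedge\psi$). As a consistency check, for $f$ constant the right-hand side $d^\varphi(\tfrac12 f^{-2})$ vanishes, recovering the fact that a rescaled torsion-free $G_2$-structure is again torsion-free and that $\varphi$ is $\Delta^{g_f}$-harmonic.
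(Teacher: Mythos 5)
Your proof is correct and follows essentially the same route as the paper: the same conformal weight formula $\star_f=f^{7-2k}\star$, the same use of $d\psi=0$ and the identity $\star(df\wedge\psi)=\nabla f\lrcorner\varphi$, and for part (2) both of the arguments the paper gives (the $\varphi|_C=0$ characterization and the $f^4$ weight match between $\psi_f$ and $\mathrm{vol}_{g_f}$ on a $4$-fold). Your reading of $d^{\star_f}$ as the $g_f$-codifferential and the sign convention $-\star_f d\star_f$ on $3$-forms in dimension $7$ are exactly what the paper intends.
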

\begin{proof}Regarding point (1), recall that $\star_f\alpha = f^{7-2k}\star\alpha$, for all $\alpha\in\Lambda^k(M)$. Then
\[
d^{\star_f}\ph	= - \star_f d\star_f \ph = - f^{-3} \star d (f\star\ph) = - f^{-3} \star (df \wedge \star\ph)
			= \star \left(d\left(\tfrac12 f^{-2}\right) \wedge \star\ph\right). 
\]
To conclude, notice that $\Delta^{g_f}\coloneqq dd^{\star_f}+d^{\star_f}d$.

Point (2) is a consequence of the fact that $C$ is coassociative if and only if $\varphi_{|C}=0$. Alternatively, let $\iota:C\hookrightarrow M$ denote the immersion. By definition, $\iota^*\psi=\vol_{\iota^*g}$. Notice that $\iota^*g_f= f^2\iota^*g$ so, since $C$ is 4-dimensional, the volume form satisfies
$\vol_{\iota^*g_f}=f^4\vol_{\iota^*g}$. On the other hand, $\psi_f=\star_f\ph_f = f^4 \star \ph$ so $\iota^*\psi_f = f^4\iota^*{\star \ph}=f^4\vol_{\iota^*g}=\vol_{\iota^*g_f}$.
\end{proof}

This provides an alternative construction of (many of) the same connections on the trivial gerbe already seen in Example \ref{e:G2gerbeconnection}.

\begin{example}\label{e:G2gerbeconnection_revised}
Let $(M,\varphi)$ be a compact torsion-free $G_2$ manifold. Let $\mathcal{G}_0$ be the trivial gerbe. Choose any global function $f$ on $M$ such that $f>0$.
Set $h \coloneqq (2f)^{-1/2}$. Then $F\coloneqq d^{\star_h}\varphi$ defines a connection on $\mathcal{G}_0$, 
with curvature $dd^{\star_h}\varphi=dd^\varphi f$.
\end{example}

In the setting of Corollary \ref{cor:G2BottChern}, we can replace Example \ref{e:G2gerbeconnection} with Example \ref{e:G2gerbeconnection_revised}. Comparing this construction with Corollary \ref{cor:BottChern}, and ignoring the restriction to positive functions, the main difference lies in the manifestation of conformality. Here, $c_1(\mathcal{G})\in H^\varphi(M)$ parametrizes the curvatures on $\mathcal{G}$ defined by a conformal class of $G_2$ structures on $M$. There, $c_1(L)\in H^{1,1}_{BC}(M)$ parametrized the curvatures on $L$ defined by a conformal class of metrics on $L_0$. The fact that, in the $G_2$ case, $f$ directly affects the structure on $M$ seems in line with the general principle that $G_2$ geometry is ``less linear" than its complex counterpart. 

\begin{remark}
Notice that the role of the metric $g$ in Proposition \ref{p:Hitchin} is rather subtle: it selects a specific connection on $\mathcal{G}$ with the desired curvature. It would be interesting to develop a better understanding of the algebraic properties, in particular the type, of connections on $\mathcal{G}$ built using metrics on $M$ induced by $G_2$ structures.
\end{remark}

\appendix

\section{Appendix: Analytic tools}\label{Sect:AnalyticTools}

We shall review here the analytic techniques used to prove Hodge decompositions. 
Although this theory is completely standard, it is perhaps more useful to collect here the specific facts used in this article,  
instead of referring to multiple papers in the literature.
At the end we shall also review the proof of the Riemannian Hodge decomposition. 
This will help illustrate these techniques in the simplest situation: elliptic operators. 
It will also help to emphasize the role of regularity results and to show how different decompositions may be interrelated. 

For our purposes it will suffice to consider real vector spaces, real vector bundles etc. 

\begin{enumerate}[1.]
\item 
Consider a linear differential operator $P:\Lambda^0(E)\rightarrow\Lambda^0(F)$ of order $m$, with smooth coefficients, 
between smooth sections of vector bundles $E,F$ on $M$. The principal symbol of $P$ is a bundle map
\[
\sigma(P):T^*M\rightarrow E^*\otimes F\simeq \mathrm{Hom}(E,F).
\]
It is said to be injective if the following condition holds: 
$\forall x\in M,\forall \xi\in T^*_xM\smallsetminus\{0\}$, the homomorphism $\sigma(P)|_{x}(\xi):E_x\rightarrow F_x$ is  injective.

\item 
If $(M,g)$ is oriented and Riemannian and $E,F$ are tensor bundles, endowed with the induced metrics and connections, 
we can define Hilbert spaces $H^k(E), H^k(F)$ of sections with $k\geq 0$ weak derivatives (defined distributionally) in $L^2$. 
By definition, $H^0(E)=L^2(E)$, $H^0(F)=L^2(F)$.

If $M$ is compact, $H^k(E)$ contains $\Lambda^0(E)$ as a dense subspace. 
More generally, let $\Lambda^0(E)'$ denote the topological dual space, \textit{i.e.}, the space of distributions. The embedding
\[
H^0(E)\rightarrow\Lambda^0(E)', \quad  e\mapsto\int_M g(e,\cdot)\vol_g
\] 
restricts to define a chain of embeddings
\[
\Lambda^0(E)<H^k(E)<\Lambda^0(E)'.
\]
The analogue holds for $F$. 

\textit{Notation. } From now on, we shall simplify the notation as follows
\[
\int_M(\cdot,\cdot) \coloneqq \int_M g(\cdot,\cdot)\vol_g. 
\]

Again using compactness and the metrics, we can define an adjoint operator $P^t:\Lambda^0(F)\rightarrow\Lambda^0(E)$ via integration by parts, 
\textit{i.e.}, such that, for all $e\in\Lambda^0(E),f\in\Lambda^0(F)$,
\[
\int_M(P(e),f) = \int_M(e,P^t(f)).
\]
The principal symbols of $P, P^t$ are related by the identity $\sigma(P^t)=\sigma(P)^t$.
 
\textit{Notation. }Below, when we wish to emphasize the role of smooth sections as test functions, we will use the notation $\eta\in\Lambda^0(E)$, 
$\phi\in\Lambda^0(F)$.

\item 
Assume $M$ is compact.  We can extend $P$ to an operator $P:H^{j+m}(E)\rightarrow H^j(F)$ in two, equivalent, ways. 
To prove their equivalence it will suffice to notice that, since $P(e)\in H^j(F)\leq L^2(F)$, it is completely characterized by the set of values 
$\{\int_M (P(e),\phi):\phi\in \Lambda^0(F)\}$. 

The simplest way is by substituting standard derivatives with weak derivatives in the definition of $P(e)$. The same calculation as above then shows that, for all $\phi\in\Lambda^0(F)$, $\int_M(P(e),\phi)=\int_M(e,P^t(\phi))$.

The second way is by continuity. 
Since $H^{j+m}(E)$ is the completion of $\Lambda^0(E)$, given $e\in H^{j+m}(E)$ there exists $e_n\in\Lambda^0(E)$ such that 
$\lim_{n\to\infty}e_n= e$. 
Convergence implies $e_n$ is a Cauchy sequence and Cauchy sequences are preserved by continuous maps, 
so $P(e_n)$ converges to some $f\in H^j(F)$. We then define $P(e) \coloneqq f$. 
Using this definition,
\[
\int_M (P(e),\phi) = \lim_{n\to\infty} \int_M (P(e_n),\phi) = \lim_{n\to\infty} \int_M(e_n,P^t(\phi)) = \int_M (e,P^t(\phi)).
\]
More generally, $P$ can be extended to an operator between distribution spaces:
\[
P:\Lambda^0(E)'\rightarrow\Lambda^0(F)', \ \ P(e)(\phi) \coloneqq e(P^t(\phi)).
\]
If $e,P(e)\in L^2$, the distributional definition can be expressed via integration 
\[
\int_M (P(e),\phi)=P(e)(\phi)=\int_M (e,P^t(\phi)),
\] 
so this definition indeed extends the definition of $P$ on $H^{j+m}(E)$.

\item 
General theory implies that operators whose symbol is injective have the following properties: 
\begin{enumerate}[(i)] 
\item Regularity: given any section $e\in H^{j+m}(E)$, if $P(e)\in\Lambda^0(F)$ then $e\in\Lambda^0(E)$. More generally, given any distribution $e\in\Lambda^0(E)'$, if $P(e)\in\Lambda^0(F)$ 
(viewed as a subspace of $\Lambda^0(F)'$) then $e\in\Lambda^0(E)$. 
\item  On a compact manifold, the extended operator $P:H^{j+m}(E)\rightarrow H^j(F)$ is semi-Fredholm: $\Ker(P)$ has finite dimension and $\Im(P)$ is a closed subspace of $H^j(F)$. Notice that, by regularity, $\Ker(P)$ in $\Lambda^0(E)'$  coincides with $\Ker(P)$ in $\Lambda^0(E)$.
\end{enumerate}
\begin{remark}Both facts can be proved using analogous properties in the better-known case of elliptic operators. Indeed, $P^tP:H^{j+m}(E)\rightarrow H^{j-m}(E)$ is elliptic, \textit{i.e.}, its principal symbol is an isomorphism. 

For example, $P(e)\in\Lambda^0(F)$ implies $P^tP(e)\in\Lambda^0(E)$, thus $e\in\Lambda^0(E)$ by elliptic regularity. 
Analogously, $\Ker(P)\leq\Ker(P^tP)$ is finite-dimensional. 

Furthermore, standard elliptic theory implies the existence of a continuous operator $A:H^{j-m}(E)\rightarrow H^{j+m}(E)$ such that 
\[
I-A(P^tP)=K:H^{j+m}(E)\rightarrow H^{j+m}(E),
\]
where $K$ is a compact operator. 
Let $f\in \overline{\Im(P)}$. We may assume $f\neq0$.  
Since $\Im(P)=\Im(P|_{\Ker(P)^\perp})$, there exists a sequence $\{e_n\}\subset\Ker(P)^\perp$ such that $P(e_n)\rightarrow f$. 
This sequence is bounded. Indeed, up to subsequences, if $\|e_n\|\rightarrow\infty$ then 
$e_n/\|e_n\|=(1/\|e_n\|)AP^tP(e_n)+K(e_n/\|e_n\|)$ 
would converge to some $e\in\Ker(P)^\perp$. 
By continuity of the norm, $\| e\|=1$ so $e\neq0$. 
Again by continuity, $\|P(e)\|=\lim((1/\|e_n\|)\|P(e_n)\|)= 0$ so $P(e)=0$. This contradicts $e\in\Ker(P)^\perp\smallsetminus\{0\}$.  
We may thus use the identity $e_n=AP^tP(e_n)+K(e_n)$ to prove that, up to a subsequence, $e_n$ converges to some $e\in H^{j+m}(E)$. 
Continuity implies that $P(e)=f$, proving that $\Im(P)$ is closed.
\end{remark}

\end{enumerate}

In this article we shall apply these facts to the case where $P$ is a $\phi$-Hessian operator (cf.~Def.~\ref{Def:phiHess}) with $m=2$. 
In each specific case of interest, we will compute the symbol and show that it is injective so as to take advantage of the above properties. 

The Hodge decomposition for $P$ will be based on the following, well-known, fact.

\begin{prop}
Let $H$ be a Hilbert space. Let $Z\leq H$ be a closed subspace. Then there exists an orthogonal decomposition $H=Z\oplus Z^\perp$.
\end{prop}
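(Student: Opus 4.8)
The plan is to run the classical orthogonal-projection argument. First I would set $Z^\perp \coloneqq \{y\in H : \langle y,z\rangle = 0 \text{ for all } z\in Z\}$ and record the elementary facts: $Z^\perp$ is a closed linear subspace, being an intersection of kernels of the continuous functionals $\langle\cdot,z\rangle$; and $Z\cap Z^\perp = \{0\}$, since any $x$ in the intersection satisfies $\|x\|^2 = \langle x,x\rangle = 0$. It then remains to prove $Z + Z^\perp = H$, which is the only substantive point.

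Fix $x\in H$ and put $d \coloneqq \inf_{z\in Z}\|x-z\|$. The crux is that this infimum is attained. Taking a minimizing sequence $\{z_n\}\subset Z$ and applying the parallelogram identity to $x-z_n$ and $x-z_m$, together with the fact that $\tfrac12(z_n+z_m)\in Z$, one obtains
\[
\|z_n - z_m\|^2 \le 2\|x-z_n\|^2 + 2\|x-z_m\|^2 - 4d^2.
\]
As $n,m\to\infty$ the right-hand side tends to $0$, so $\{z_n\}$ is Cauchy; completeness of $H$ and closedness of $Z$ then yield a limit $z\in Z$ with $\|x-z\| = d$.

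Finally I would check that $y\coloneqq x-z$ lies in $Z^\perp$. For every $w\in Z$ and $t\in\R$ we have $z+tw\in Z$, hence $\|y - tw\|^2 \ge d^2 = \|y\|^2$; expanding gives $-2t\,\langle y,w\rangle + t^2\|w\|^2 \ge 0$ for all $t\in\R$, which forces $\langle y,w\rangle = 0$. Thus $x = z + y$ with $z\in Z$ and $y\in Z^\perp$, so $H = Z + Z^\perp$; combined with $Z\cap Z^\perp = \{0\}$ and the defining orthogonality of $Z^\perp$, this is the asserted orthogonal decomposition $H = Z\oplus Z^\perp$. The single genuine obstacle is the attainment of the distance $d$, and this is precisely where completeness of $H$ enters, via the Cauchy property extracted from the parallelogram law above.
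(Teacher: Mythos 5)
Your proof is correct and complete: the parallelogram-law argument for attainment of the distance, followed by the variational verification that the residual is orthogonal to $Z$, is the standard proof of the projection theorem, and your use of the real inner product is consistent with the paper's convention of working over $\R$. The paper itself states this proposition without proof, labelling it a ``well-known fact,'' so there is no in-paper argument to compare against; your write-up supplies exactly the classical argument the authors are implicitly invoking.
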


When $P$ has injective symbol, we can apply this to the map $P:H^2(E)\rightarrow H^0(F)=L^2(F)$, obtaining a decomposition 
\[
L^2(F) = \Im(P)\oplus \Im(P)^\perp.
\]

The space $\Im(P)^\perp$ is rather abstract. We can try to improve this decomposition by introducing a map $P^*$ such that $\Im(P)^\perp=\Ker(P^*)$. 
We will rely on the framework provided by the theory of unbounded operators between Hilbert spaces. Let us recall a few more facts.

\begin{enumerate}[1.]
\item[5.]
Fix a linear map $T:H_1\to H_2$ between Hilbert spaces. In general, $T$ might be unbounded, equivalently non-continuous. 
In this setting the domain of definition of $T$ is usually only a subspace $D\leq H_1$. By restriction many such domains are possible   
so, as usual, the choice of a specific domain of definition is an important part of the data which defines $T$. 

\begin{example}
Let $M$ be compact and $T=P$ be a second-order linear differential operator with injective principal symbol, as above. Set $H_1 \coloneqq L^2(E)$, $H_2 \coloneqq L^2(F)$. 
In this context $T$ is generally not continuous. Two natural domains for $T$ are $H^2(E)$ and $\Lambda^0(E)$. Both are dense in $H_1$. 
If the principal symbol of $T$ is injective and we choose the domain $H^2(E)$, the image of $T$ is closed in $H_2$.
\end{example} 

\item[6.]
If $D$ is dense in $H_1$, we can define the adjoint map $T^*$ in two steps, as follows.

Step 1. The domain $D'$ of $T^*$ is the set of $w\in H_2$ such that the corresponding component of $T$
\[
D\rightarrow\R, \quad v\mapsto (T(v),w),
\]
is continuous. It thus extends by continuity to a linear map $w_T:H_1\rightarrow\R$.

Step 2. Given $w\in D'$, the Riesz representation theorem implies that there exists a unique element in $H_1$, which we denote $T^*(w)$, 
such that $w_T(\cdot)=(\cdot,T^*(w))$. Restricting to $D$ we find the following relation:
\[
(T(v),w)=(v,T^*(w)).
\]
One can check that the map $w\in D'\mapsto T^*(w)\in H_1$ is linear.

The bottom line is that we have associated, to each pair $(T,D)$ with $D$ dense in $H_1$, a pair $(T^*,D')$. 
The map $T^*$ is generally not continuous, and its domain $D'$ may or may not be dense in $H_2$. 

As mentioned, the reason we are interested in $T^*$ is the following.

\begin{lem}
Given $(T,D)$ with $D$ dense in $H_1$, $\Im(T)^\perp=\ker(T^*)$. 
\end{lem}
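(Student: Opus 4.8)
The plan is to establish the set equality $\Im(T)^\perp=\ker(T^*)$ by proving the two inclusions separately, in each case simply unwinding the two-step construction of the adjoint $(T^*,D')$ recalled just above the statement.

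For the inclusion $\Im(T)^\perp\subseteq\ker(T^*)$, I would start with $w\in\Im(T)^\perp$, so that $(T(v),w)=0$ for every $v\in D$. Then the functional $v\mapsto(T(v),w)$ on $D$ is identically zero, hence trivially continuous; by Step 1 of the construction of $T^*$ this means $w\in D'$, and the continuous extension $w_T:H_1\to\R$ is the zero functional. By Step 2, $T^*(w)$ is the unique element of $H_1$ with $(\cdot,T^*(w))=w_T(\cdot)=0$, and uniqueness in the Riesz representation theorem forces $T^*(w)=0$. Thus $w\in\ker(T^*)$.

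For the reverse inclusion $\ker(T^*)\subseteq\Im(T)^\perp$, I would take $w\in\ker(T^*)$; in particular $w\in D'$, so the defining relation $(T(v),w)=(v,T^*(w))$ holds for all $v\in D$. Since $T^*(w)=0$, the right-hand side vanishes, so $w$ is orthogonal to $T(v)$ for every $v\in D$, i.e.\ $w\in\Im(T)^\perp$. Here $\Im(T)=T(D)$, and since $\Im(T)^\perp=\overline{\Im(T)}^\perp$ for any subspace, it is irrelevant whether one works with $\Im(T)$ or with its closure.

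I do not expect a genuine obstacle: the statement is a formal consequence of the definitions. The only points meriting care are bookkeeping ones, namely that the domain condition in Step 1 is automatically satisfied when the pairing functional is zero, and that the density of $D$ in $H_1$ is exactly what makes $w_T$, hence $T^*(w)$, well defined — so that the first inclusion above is not circular.
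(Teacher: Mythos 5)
Your proof is correct and follows essentially the same argument as the paper: for $w\in\Im(T)^\perp$ the zero pairing functional is trivially continuous, so $w\in D'$ with $w_T\equiv 0$ and hence $T^*(w)=0$; conversely the adjoint relation $(T(v),w)=(v,T^*(w))$ gives the other inclusion immediately. The extra remarks on density and on $\Im(T)$ versus its closure are harmless bookkeeping and do not change the argument.
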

\begin{proof}
Choose $w\in D'$ such that $T^*(w)=0$. Then, $\forall v\in D$, $0=(v,T^*(w))=(T(v),w)$, so $w\in \Im(T)^\perp$.

Conversely, assume $w\in \Im(T)^\perp$. The map $v\in D\mapsto (Tv,w)=0$ is continuous so $w\in D'$. Clearly, $w_T\equiv 0$ so $T^*(w)=0$. 
\end{proof}

\item[7.] 
Applying the above to the case $T = P$ leads us to the following decomposition.

\begin{prop}\label{p:abstractdecomp}
Let $M$ be an oriented compact Riemannian manifold and $P$ be a second-order linear differential operator between tensor bundles $E,F$, 
with injective principal symbol. Set $H_1 \coloneqq L^2(E)$ and choose $D\coloneqq H^2(E)$. Set $H_2 \coloneqq L^2(F)$ 
and let $(P^*,D')$ be the adjoint operator. Then 
\[
L^2(F)=\Im(P)\oplus \Ker(P^*).
\]
\end{prop}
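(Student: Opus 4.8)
The statement is an assembly of three ingredients already prepared in the appendix, so the plan is simply to invoke them in the right order. First I would extend $P$ to the bounded operator $P\colon H^2(E)\to L^2(F)$ as in item 3, and record that its principal symbol is injective by hypothesis. Then, by the semi-Fredholm property in item 4(ii), $\Im(P)$ is a \emph{closed} subspace of $L^2(F)$. This is the only genuinely analytic input, and it is exactly the place where injectivity of the symbol and compactness of $M$ are used (via the parametrix argument for the elliptic operator $P^tP$ sketched in the Remark following item 4).

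Next I would apply the Hilbert-space orthogonal decomposition proposition to the closed subspace $Z\coloneqq\Im(P)\leq H\coloneqq L^2(F)$, obtaining
\[
L^2(F)=\Im(P)\oplus\Im(P)^\perp .
\]
It remains to identify $\Im(P)^\perp$ with $\Ker(P^*)$. For this I would regard $P$ as an unbounded operator $T\coloneqq P\colon H_1=L^2(E)\to H_2=L^2(F)$ with domain $D=H^2(E)$; since $M$ is compact, $\Lambda^0(E)$ — hence a fortiori $H^2(E)$ — is dense in $L^2(E)$, so the adjoint $(P^*,D')$ of item 6 is defined. The Lemma of item 6 then gives $\Im(T)^\perp=\Ker(T^*)$, i.e. $\Im(P)^\perp=\Ker(P^*)$. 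Substituting this into the displayed decomposition yields $L^2(F)=\Im(P)\oplus\Ker(P^*)$, which is the claim.

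I do not expect a serious obstacle here: the proof is a concatenation of (i) closedness of the image of a symbol-injective operator, (ii) the orthogonal complement decomposition in a Hilbert space, and (iii) the general identity $\Im(T)^\perp=\Ker(T^*)$ for a densely defined operator. If anything, the only point to state carefully is that the $\Im(P)$ appearing on both sides is the image of the \emph{extended} operator on $H^2(E)$, and that — by the regularity statement in item 4(i) — intersecting with smooth sections recovers the image of the original operator on $\Lambda^0(E)$; but that refinement is not needed for the bare statement of the proposition and is deferred to the specific applications (Steps in the proofs of Theorems \ref{ThmSplittingKahler}, \ref{thm:G2Hodge}, \ref{thm:SU3Hodge}).
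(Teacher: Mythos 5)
Your proposal is correct and follows exactly the paper's own route: the paper establishes the proposition by combining the closed-image (semi-Fredholm) property from the injective symbol, the Hilbert-space splitting $H=Z\oplus Z^\perp$ for closed $Z$, and the identity $\Im(T)^\perp=\Ker(T^*)$ for the densely defined unbounded operator $(P,H^2(E))$. Nothing is missing, and your closing remark about regularity and recovering the image of the operator on smooth sections matches how the paper defers that refinement to the applications.
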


The definition of $P^*$ is still very abstract. Our final goal is to relate it to the more explicitly defined operator $P^t$.

As in Point 3.~above, the operator $P^t:\Lambda^0(F)\rightarrow \Lambda^0(E)$ extends to an operator
\[
P^t:\Lambda^0(F)'\rightarrow\Lambda^0(E)', \quad P^t(f)(\eta) \coloneqq f(P(\eta)).
\]
Assume $f$ lies in the domain of $P^*$. Then $f\in L^2(F)$ and $P^*(f)\in L^2(E)$, so
\[
P^t(f)(\eta) = f(P(\eta)) = \int_M(f,P(\eta)) = \int_M (P^*(f),\eta) = P^*(f)(\eta).
\]
This means that $P^t(f)=P^*(f)$, \textit{i.e.}, $P^*$ is a restriction of the distributional operator. 
Notice also that any $f\in\Lambda^0(F)$ belongs to the domain of $P^*$. Indeed, the map
\[
H^2(E)\rightarrow\R, \quad e \mapsto \int_M(P(e),f) = \int_M(e,P^t(f))
\]
is continuous, so it extends to a map $L^2(E)\rightarrow\R$.

We may conclude that $P^*$ is a distributional extension of the original operator $P^t:\Lambda^0(F)\rightarrow \Lambda^0(E)$. 
\end{enumerate}

\paragraph{Application.}The simplest application of the above theory is as follows. 

Assume $P$ in Proposition $\ref{p:abstractdecomp}$ is elliptic, \textit{i.e.}, its principal symbol is an isomorphism. 
This implies that $\mathrm{rank}(E)=\mathrm{rank}(F)$ and that $\sigma({P^t})$ is injective, so we can apply the regularity property to $P^*$ finding $\ker(P^*)=\ker(P^t)\leq\Lambda^0(F)$. 
This leads to the decomposition $L^2(F)=\Im(P)\oplus \Ker(P^t)$. 

\medskip

The primary example of this situation is as follows.

\begin{thm}[Riemannian Hodge decomposition]\label{thm:RiemannianHodge}
Let $(M,g)$ be a compact oriented Riemannian manifold. Let $\Delta\coloneqq dd^*+d^*d:\Lambda^k(M)\rightarrow\Lambda^k(M)$ be the Hodge Laplacian operator on k-forms. Then $\Lambda^k(M)=\Im(\Delta)\oplus\ker(\Delta)$.

Furthermore, $\ker(\Delta)$ is finite-dimensional and the decomposition is orthogonal with respect to the natural $L^2$-metric on $\Lambda^k(M)$.
\end{thm}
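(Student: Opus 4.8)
The plan is to read Theorem~\ref{thm:RiemannianHodge} as the prototypical special case of the machinery assembled in this appendix, applied to the operator $P \coloneqq \Delta$ on $E = F = \Lambda^k(M)$, which is elliptic and formally self-adjoint.

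First I would record the two algebraic facts that let $\Delta$ fit the hypotheses of Proposition~\ref{p:abstractdecomp}. Its principal symbol is $\sigma(\Delta)|_x(\xi) = -|\xi|^2\,\mathrm{Id}$, a nonzero multiple of the identity for each $\xi\neq 0$; hence $\Delta$ is elliptic and both $\sigma(\Delta)$ and $\sigma(\Delta^t) = \sigma(\Delta)^t$ are injective. Moreover, integrating by parts via $\int_M(d\eta,\phi) = \int_M(\eta,d^*\phi)$ and $\int_M(d^*\eta,\phi) = \int_M(\eta,d\phi)$ gives $\Delta^t = \Delta$: the Hodge Laplacian is formally self-adjoint.

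Next I would apply Proposition~\ref{p:abstractdecomp} with $H_1 = H_2 = L^2(\Lambda^k(M))$ and domain $D = H^2(\Lambda^k(M))$, obtaining
\[
L^2(\Lambda^k(M)) = \Im(\Delta)\oplus\Ker(\Delta^*),
\]
an $L^2$-orthogonal splitting in which $\Delta^*$ is the abstract Hilbert-space adjoint and $\Im(\Delta)$ refers to the extended operator on $H^2$. Since $\Delta^*$ is a distributional extension of $\Delta^t = \Delta$ and $\sigma(\Delta^t)$ is injective, the regularity property~(i) above forces $\Ker(\Delta^*) = \Ker(\Delta^t) = \Ker(\Delta)\subseteq\Lambda^k(M)$, so harmonic $L^2$-forms are automatically smooth; the semi-Fredholm property~(ii) shows this kernel is finite-dimensional.

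Finally I would descend to smooth forms by intersecting the displayed splitting with $\Lambda^k(M)$. Since $\Ker(\Delta)\subseteq\Lambda^k(M)$ already, this gives $\Lambda^k(M) = (\Im(\Delta)\cap\Lambda^k(M))\oplus\Ker(\Delta)$, and one further use of regularity — if $\Delta\beta = \alpha$ with $\beta\in H^2$ and $\alpha$ smooth, then $\beta$ is smooth — identifies $\Im(\Delta)\cap\Lambda^k(M)$ with the image of $\Delta$ acting on $\Lambda^k(M)$. Orthogonality and finite-dimensionality of $\Ker(\Delta)$ are inherited from the $L^2$-statement. There is no real obstacle here: the genuine analysis (closed range, semi-Fredholmness, elliptic regularity) is already packaged into the discussion above and into Proposition~\ref{p:abstractdecomp}, so the only care required is matching the abstract adjoint $\Delta^*$ with the concrete formal adjoint $\Delta^t$ and pushing the decomposition back onto smooth sections via regularity — exactly the two bookkeeping steps that recur in the proofs of Theorems~\ref{ThmSplittingKahler}, \ref{thm:G2Hodge} and \ref{thm:SU3Hodge}.
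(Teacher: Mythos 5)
Your proposal is correct and follows essentially the same route as the paper: apply Proposition~\ref{p:abstractdecomp} to the elliptic, formally self-adjoint operator $\Delta$, use regularity to identify $\Ker(\Delta^*)$ with the smooth kernel $\Ker(\Delta^t)=\Ker(\Delta)$, and then intersect the $L^2$-splitting with $\Lambda^k(M)$, invoking regularity once more to replace $\Im(\Delta)\cap\Lambda^k(M)$ by the image of $\Delta$ on smooth forms. The only difference is that you spell out the symbol computation and the integration by parts explicitly, whereas the paper delegates these to the preceding ``Application'' paragraph.
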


\begin{proof}
The Hodge Laplacian is elliptic and self-adjoint, \textit{i.e.}, $\Delta^t=\Delta$, so 
\[
L^2(\Lambda^k(M))=\Im(\Delta)\oplus\Ker(\Delta).
\] 
Regularity implies that $\Ker(\Delta)\leq\Lambda^k(M)$. If we intersect both sides by $\Lambda^k(M)$, we thus obtain
$$\Lambda^k(M)=(\Im(\Delta)\cap\Lambda^k(M))\oplus\Ker(\Delta).$$
Here, $\Im(\Delta)$ still refers to the extended operator. However, regularity shows that $\Im(\Delta)\cap\Lambda^k(M)$ coincides with the image of the original operator $\Delta:\Lambda^k(M)\rightarrow\Lambda^k(M)$. We may thus reformulate the above as follows:
\begin{equation}\label{eq:RiemannianHodge}
\Lambda^k(M)=\Im(\Delta)\oplus\Ker(\Delta).
\end{equation}
This statement is completely expressed in terms of $\Delta$ on smooth forms, so we have reached the desired conclusion.
\end{proof}

The operator $d:\Lambda^{k-1}(M)\rightarrow\Ker(d)\leq \Lambda^k(M)$ is more difficult to deal with because its principal symbol is not injective. We can however bypass this issue in two ways. 

One way starts with the observation that $\Im(\Delta)\subseteq \Im(d)\oplus \Im(d^*)$. Simple linear algebra (orthogonality arguments) and integration by parts, applied to equation \ref{eq:RiemannianHodge}, shows that the converse also holds. This leads to the orthogonal splitting
\begin{equation}\label{eq:RiemannianHodge_bis}
\Lambda^k(M)=\Im(d)\oplus\Ker(\Delta)\oplus\Im(d^*).
\end{equation}
The first two terms on the RHS can be identified with $\Ker(d)$, leading to a Hodge decomposition for $d$:
\begin{equation}\label{eq:dHodge}
\Ker(d)=\Im(d)\oplus\Ker(\Delta).
\end{equation}
The other way is to notice that the operator $d+d^*$ is elliptic and self-adjoint. As in Theorem \ref{thm:RiemannianHodge}, we thus get a splitting
$$\Lambda^*(M)=\Im(d+d^*)\oplus \Ker(d+d^*).$$
Linear algebraic arguments allow us to restrict this to the subspace $\Lambda^k(M)$, leading back to equation \ref{eq:RiemannianHodge_bis}.

We can formalize equation \ref{eq:dHodge} as follows. Recall the definition of de Rham cohomology: given the complex
$$\cdots\rightarrow\Lambda^{k-1}(M)\xrightarrow[]{d}\Lambda^{k}(M)\xrightarrow[]{d}\Lambda^{k+1}(M)\rightarrow\cdots,$$
one sets $H^k(M;\R)\coloneqq \ker(d)/\Im(d)$. 
The above results show that:
\begin{enumerate}[(i)]
\item $H^k(M;\R)\simeq\Ker(\Delta)$. In particular, it is finite-dimensional. 
\item Any exact form $\alpha\in\Lambda^k(M)$ is ``doubly exact'', in the following sense: $\alpha=d\beta$ implies $\alpha=dd^*\alpha'$, for some $\alpha'\in \ker(d)$. 
\item This $\alpha'$ is uniquely defined up to harmonic forms, so the choice $\beta\coloneqq d^*\alpha'$ provides a canonical solution to the equation $d\beta=\alpha$. 
\end{enumerate}

\begin{remark}
The fact that exactness implies double exactness is vaguely similar to the global $\partial\bar\partial$ lemma for compact K\"ahler manifolds. However, the $\partial\bar\partial$ operator does not depend on the metric and the corresponding lemma is a less elementary consequence of the Laplacian Hodge decomposition. 
\end{remark}

\bibliographystyle{amsplain}
\bibliography{ddphi_biblio}

\end{document}